\newtheorem{thm}{Theorem}[section]
\newtheorem{corollary}[thm]{Corollary}
\newtheorem{lem}[thm]{Lemma}
\newtheorem{prop}[thm]{Proposition}
\theoremstyle{definition}
\newtheorem{defn}[thm]{Definition}
\newtheorem{rem}[thm]{Remark}
\newcommand\bE{\mathbb{E}}
\newcommand\bN{\mathbb{N}}
\newcommand\bM{\mathbb{M}}
\newcommand\bR{\mathbb{R}}
\newcommand\bS{\mathbb{S}}
\newcommand\bT{\mathbb{T}}
\newcommand\bZ{\mathbb{Z}}
\newcommand\fR{\mathbb{R}}
\newcommand\cB{\mathcal{B}}
\newcommand\cF{\mathcal{F}}
\newcommand\cI{\mathcal{I}}
\newcommand\cL{\mathcal{L}}
\newcommand\cS{\mathcal{S}}
\newcommand\cM{\mathcal{M}}
\newcommand\cT{\mathcal{T}}
\newcommand\cO{\mathcal{O}}
\newcommand\frP{\mathfrak{P}}
\newcommand\frI{\mathfrak{I}}
\newcommand{\p}{\partial}
\newcommand{\aint}{-\hspace{-0.40cm}\int}
\newcommand{\mysection}[1]{\section{#1}
\setcounter{equation}{0}}
\def\XXint#1#2#3{{\setbox0=\hbox{$#1{#2#3}{\int}$ }
\vcenter{\hbox{$#2#3$ }}\kern-.58\wd0}}
\def\@tocline#1#2#3#4#5#6#7{\relax
  \ifnum #1>\c@tocdepth % then omit
  \else
    \par \addpenalty\@secpenalty\addvspace{#2}%
    \begingroup \hyphenpenalty\@M
    \@ifempty{#4}{%
      \@tempdima\csname r@tocindent\number#1\endcsname\relax
    }{%
      \@tempdima#4\relax
    }%
    \parindent\z@ \leftskip#3\relax \advance\leftskip\@tempdima\relax
    \rightskip\@pnumwidth plus4em \parfillskip-\@pnumwidth
    #5\leavevmode\hskip-\@tempdima
      \ifcase #1
       \or\or \hskip 1em \or \hskip 2em \else \hskip 3em \fi%
      #6\nobreak\relax
    \dotfill\hbox to\@pnumwidth{\@tocpagenum{#7}}\par
    \nobreak
    \endgroup
  \fi}
\begin{document}
\title[An IVP with a time-measurable pseudo-differential operator in a weighted $L_p$-space]
{A regularity theory for an initial value problem with a time-measurable pseudo-differential operator in a weighted $L_p$-space}

\author[J.-H. Choi]{Jae-Hwan Choi}
\address[J.-H. Choi]{School of Mathematics, Korea Institute for Advanced Study, 85 Hoegi-ro, Dongdaemun-gu, Seoul 02455, Republic of Korea} 
\email{jhchoi@kias.re.kr}

\author[I. Kim]{Ildoo Kim}
\address[I. Kim]{Department of Mathematics, Korea University, 145 Anam-ro, Seongbuk-gu, Seoul 02841, Republic of Korea}
\email{waldoo@korea.ac.kr}

\author[J.B. Lee]{Jin Bong Lee}
\address[J.B. Lee]{Research Institute of Mathematics, Seoul National University, Seoul 08826, Republic of Korea}
\email{jinblee@snu.ac.kr}

\subjclass[2020]{35B30, 35S05, 35B65, 47G30}

\keywords{Initial value problem, (time-measurable) Pseudo-differential operator, Muckenhoupt's weight, Variable smoothness}

\maketitle
\begin{abstract}
In this study, we investigate the existence, uniqueness, and maximal regularity estimates of solutions to homogeneous initial value problems involving time-measurable pseudo-differential operators within the framework of weighted mixed norm Lebesgue spaces.
The class of temporal weights in our regularity estimates contains Muckenhoupt's class, and the initial data is in weighted Besov spaces with variable order.
\end{abstract}
\tableofcontents

\mysection{Introduction}

In this paper, we investigate the homogeneous initial value problem described by the following equation:
\begin{equation}\label{eqn:model eqn}
    \begin{cases}
        \p_tu(t,x)=\psi(t,-i\nabla)u(t,x),\quad &(t,x)\in(0,T)\times\bR^d,\\
        u(0,x)=u_0(x),\quad &x\in\bR^d.
    \end{cases}
\end{equation}
Here $\psi(t,-i\nabla)$ denotes a time-measurable pseudo-differential operator defined as
\begin{align*}
\psi(t,-i\nabla)u(t,x)= \cF^{-1}\left[ \psi(t,\cdot) \cF[u(t,\cdot)] \right] (x),
\end{align*}
where $\mathcal{F}$ and $\mathcal{F}^{-1}$ represent the $d$-dimensional Fourier transform and its inverse on $\mathbb{R}^d$, respectively.
Specifically, they are defined by
$$
    \mathcal{F}[f](\xi) := \frac{1}{(2\pi)^{d/2}} \int_{\mathbb{R}^d} \mathrm{e}^{-i \xi \cdot x} f(x) \mathrm{d}x, \,\,\,\,
    \mathcal{F}^{-1}[f](x) := \frac{1}{(2\pi)^{d/2}} \int_{\mathbb{R}^d} \mathrm{e}^{ix\cdot \xi} f(\xi) \mathrm{d}\xi.
$$
The symbol $\psi$ associated with the operator $\psi(t,-i\nabla)$ is required to fulfill two conditions:
\begin{itemize}
    \item We say that a symbol $\psi(t,\xi)$ or the operator $\psi(t,-i\nabla)$ satisfies {\bf an ellipticity condition} (with ($\gamma$, $\kappa$))
 if there exists a $\gamma\in(0,\infty)$ and $\kappa\in(0,1]$ such that
\begin{align}\label{condi:ellipticity}
    \Re [-\psi(t,\xi)]\geq \kappa |\xi|^{\gamma},\quad \forall (t,\xi)\in (0,\infty)\times\bR^d,
\end{align}
where $\Re[z]$ denotes the real part of the complex number $z$.
\item For $n \in \bN$, we say that a symbol $\psi(t,\xi)$ or the operator $\psi(t,-i\nabla)$ has {\bf a $n$-times regular upper bound} (with ($\gamma$, $M$)) if there exist positive constants $\gamma$ and $M$ such that 
\begin{align}\label{condi:reg ubound}
    |D^{\alpha}_{\xi}\psi(t,\xi)|\leq M|\xi|^{\gamma-|\alpha|},\quad \forall (t,\xi)\in (0,\infty) \times(\bR^{d}\setminus\{0\}),
\end{align}
for any ($d$-dimensional) multi-index $\alpha$ with $|\alpha| \leq n$.
\end{itemize}
The parameter $\gamma$ in conditions \eqref{condi:ellipticity} and \eqref{condi:reg ubound} denotes the \emph{order} of the operator $\psi(t,-i\nabla)$, which is consistently set within the range $(0, \infty)$ throughout this paper.

Our objective is to determine a suitable value for $n$ in \eqref{condi:reg ubound} that allows us to establish a well-posedness result and develop a maximal regularity theory for equation \eqref{eqn:model eqn} within the framework of $L_q((0,T),\mu(\mathrm{d}t);L_p(\mathbb{R}^d,w\,\mathrm{d}x))$.
Here, $\mu$ denotes a nonnegative Borel measure, and $w$ refers to a weight function belonging to Muckenhoupt's $A_p(\mathbb{R}^d)$ class.

\begin{defn}
\label{def ap}
For $p\in(1,\infty)$, let $A_p(\bR^d)$ be the class of all nonnegative and locally integrable functions $w$ satisfying
\begin{align*}
							\notag
&[w]_{A_p(\bR^d)}\\
:=&\sup_{x_0\in\bR^d,r>0}\left(\aint_{B_r(x_0)}w(x)\mathrm{d}x\right)\left(\aint_{B_r(x_0)}w(x)^{-\frac{1}{p-1}}\mathrm{d}x\right)^{p-1}\\
:=&\sup_{x_0\in\bR^d,r>0}\left(\frac{1}{|B_r(x_0)|}\int_{B_r(x_0)}w(x)\mathrm{d}x\right)\left(\frac{1}{|B_r(x_0)|}\int_{B_r(x_0)}w(x)^{-\frac{1}{p-1}}\mathrm{d}x\right)^{p-1}
<\infty,
\end{align*}
where $|B_r(x_0)|$ denotes the Lebesgue measure of $B_r(x_0) := \{ x \in \bR^d : |x-x_0| < r\}$. 
The class $A_{\infty}(\bR^d)$ could be defined as
    $$
    A_\infty(\bR^d)=\bigcup_{p\in(1,\infty)}A_p(\bR^d).
    $$
\end{defn}

We begin by revisiting prior studies on the $L_p$-regularity theory for evolution equations.
A multitude of research papers address zero-initial inhomogeneous problems within $L_p$ spaces, as described by:
\begin{equation}
										\label{20230126 01}
    \begin{cases}
        \p_tu(t,x)=\psi(t,-i\nabla)u(t,x) + f(t,x),\quad &(t,x)\in(0,T)\times\bR^d,\\
        u(0,x)=0,\quad &x\in\bR^d.
    \end{cases}
\end{equation}
Initially, we highlight findings concerning time-measurable pseudo-differential operators.
For the case of data $f$ and solutions $u$ in unweighted $L_p$ spaces, we refer the reader to \cite{kim2015parabolic,kim2016lplq,kim2016lp,kim2018lp}.
The exploration extends to the results in weighted $L_p$ spaces by the first and second authors \cite{Choi_Kim2022}.
Significantly, if the operator $\psi(t,-i\nabla)$'s order, $\gamma$, is less than or equal to two, such operators are generators of additive processes, which are time-inhomogeneous L\'evy processes.
Regularity theories in $L_p$ spaces for \eqref{20230126 01} involving generators of stochastic processes and non-local operators, can be found in \cite{dong2021sobolev,gyongy2021lp,kang2021lp,kim2019lp,kim2012lp,kim2021lq,kim2021sobolev,mikulevivcius1992,mikulevivcius2014,mikulevivcius2017p,mikulevivcius2019cauchy,zhang2013maximal,zhang2013lp}.
Additionally, studies employing an $H^\infty$-calculus approach for general operators with smooth symbols in $L_p$ spaces are presented in \cite{neerven2012maximal,neerven2020,portal2019stochastic}.

On the other hand, studies addressing non-zero initial value problems with general operators in $L_p$ spaces, akin to our focus in equation \eqref{eqn:model eqn}, are comparatively scarce.
We found \cite{Choi_Kim2023,Dong_Kim2021,Dong_Liu2022,Gallarati_Veraar2017} as recent results to non-zero initial value problems with general operators.
The first and second authors \cite{Choi_Kim2023} considered initial value problems with generators of general additive processes.
Gallarati and Veraar \cite{Gallarati_Veraar2017} studied an $L_p$-maximal regularity theory of evolution equations with general operators using $H^\infty$-calculus. 
Especially, in \cite[Section 4.4]{Gallarati_Veraar2017}, they obtained solvability to an equation of the type of \eqref{eqn:model eqn} in $L_p((0,T),v\, \mathrm{d}t; X_1)\cap W_{p}^{1}((0,T),v\, \mathrm{d}t; X_0)$.
Here $X_0$ and $X_1$ are Banach spaces that have a finite cotype, and the initial data space $X_{v,p}$ is given by a trace of a certain interpolated space determined by $X_0$ and $X_1$, \textit{i.e.}
\begin{equation*}
    \begin{gathered}
        X_{v,p}:=\{x\in X_0:\|x\|_{X_{v,p}}<\infty\},\\
        \|x\|_{X_{v,p}}:=\inf\{\|u\|_{L_p((0,T),v\, \mathrm{d}t; X_1)\cap W_{p}^{1}((0,T),v\, \mathrm{d}t; X_0)}:u(0)=x\}.
    \end{gathered}
\end{equation*}
It is not easy to identify  $X_{v,p}$ as a certain interpolation space in general.
However, this abstract space can be characterized by a real interpolation space if $v(t)$ is given by a power-type weight $v(t)=t^\gamma$.
Indeed, due to Triebel \cite[Theorem 1.8.2]{triebel1978interpolation},
$$
X_{v,p}=(X_0,X_1)_{1-\frac{(1+\gamma)}{p},p},\quad \gamma\in(-1,p-1),
$$
where $(X_0,X_1)_{\theta,p}$ stands for the real interpolation space between $X_0$ and $X_1$.
We refer to \cite{Hemmel_Lindemulder2022,Kohne2010,Kohne2014,Lindemulder2020,Meyries2012}, which provide such identification for initial data spaces with the power type weights in various situations such as smooth domains, quasi-linear equations, and systems of equations.

Furthermore, an exploration into non-zero initial value problems with time-fractional derivative, addressing second-order issues with rough coefficients, extends the discourse, underscoring the adaptability of weighted Slobodeckij spaces as initial data spaces to ascertain solvability for a broader range of $\alpha$;
\begin{equation}\label{eqn:time frac}
	\begin{cases}
	\partial_t^\alpha u(t,x) = a^{ij}(t,x)D_{x^i x^j}u(t,x) + f(t,x), \quad &(t,x) \in (0,T)\times \mathbb{R}^d,\\
	u(0, x) = u_0(x), \quad &x\in \bR^d,
	\end{cases}
\end{equation}
Dong and Kim \cite{Dong_Kim2021} use a weighted Slobodeckij space as initial data space to obtain the solvability of \eqref{eqn:time frac} in $L_q((0,T),t^\gamma\, \mathrm{d}t; W_p^2(\bR^d,w\,\mathrm{d}x))$ for $\alpha \in (0,1]$.
Dong and Liu \cite{Dong_Liu2022} extended the range of $\alpha$ to $(0,2)$.

Despite significant advancements, to the authors' knowledge, results elucidating the solvability of equation \eqref{eqn:model eqn} under non-power-type temporal weights and providing explicit characterizations of the initial data spaces have not been fully pursued.
This gap can be attributed to two principal challenges.
Firstly, the absence of trace theorems for non-power-type temporal weights, denoted as $\mu(\mathrm{d}t) \neq t^{\gamma}\,\mathrm{d}t$ at the initial moment, leaves the optimal initial data space not characterized.
Secondly, the solution operator, defined by $u_0 \mapsto \int_{\mathbb{R}^d} K(t,0, x-y) u_0(y) \mathrm{d}y$, transitions to an extension operator from $L_p(\mathbb{R}^d)$ to $L_p(\mathbb{R}^{d+1})$, with the kernel $K(t,s, x-y)$ given by
\begin{align*}
K(t,s, x-y) = \cF^{-1}\left[  \exp\left( \int_s^t \psi(r,\cdot)\mathrm{d}r \right)  \right](x-y).
\end{align*}
This transformation complicates the application of the weighted $L_p (\mathbb{R}^{d+1})$-boundedness of singular integral operators, which is crucial for inhomogeneous problems \eqref{20230126 01}.
Furthermore, while an $L_q(L_p)$-extension derived from $L_p$-estimates is typically straightforward due to Rubio de Francia's extrapolation theorem, it is not possible to apply the extrapolation theorem when one considers a weight class beyond the Muckenhoupt $A_p(\bR)$-class.

We overcome the difficulties by leveraging the Littlewood--Paley theory and the Laplace transform.
Initially, we prove spatial mean oscillation estimations for each Littlewood--Paley projection of a solution, as delineated in Theorem \ref{prop:maximal esti}.
This approach establishes a framework for weighted $L_p$ estimates that circumvents the reliance on singular integral operator theory.
Within this framework, integrating the solution along the time variable yields the Laplace transform of a measure $\mu(\mathrm{d}t)$, and the measure $\mu$ will be our temporal weight class.
We note that the temporal weight is beyond the $A_p(\bR)$ class.
The application of the Laplace transform to $\mu$ introduces variable order Besov spaces as a natural component of our analysis.
These methodologies are underpinned by pointwise upper bound estimates for each Littlewood--Paley projection of the fundamental solution, as specified in Theorem \ref{22.02.15.11.27}.

The novelty of our results is twofold.
\begin{enumerate}
    \item \textbf{Optimal initial data space}: Our findings are particularly significant when the temporal measure is defined as $\mu(\mathrm{d}t) = w(t)\mathrm{d}t$, with $w$ belonging to the Muckenhoupt class.
    A question arises regarding whether the generalized Besov space proposed in this study is equivalent to the trace space.
    We affirmatively address this question, offering a comprehensive explanation and references in Remark \ref{24.02.21.14.50}.

    \item \textbf{Distinctiveness from inhomogeneous problems}: Our results extend beyond the scope of existing work on trace theorems and inhomogeneous problem solutions. 
    Specifically, our solutions are characterized within the space $L_q((0,T),\mu(\mathrm{d}t); L_p(\mathbb{R}^d, w\,\mathrm{d}x))$, where $q\in(0,\infty)$ and $\mu$ represents a nonnegative Borel measure on $\mathbb{R}$.
    To our knowledge, there are no preceding results that encompass the trace theorem or inhomogeneous theorem within this context.
    For illustrative purposes, consider a solution $u_1$ to the following equation:
    \begin{equation*}
    \begin{cases}
        \p_tu_1(t,x)= \Delta^{\gamma/2}u_1(t,x),\quad &(t,x)\in(0,T)\times\bR^d,\\
        u(0,x)=u_0(x),\quad &x\in\bR^d,
    \end{cases}
\end{equation*}
and, by setting $f(t,x) := \psi(t,-i\nabla)u_1(t,x) - \Delta^{\gamma/2}u_1(t,x)$ in $\eqref{20230126 01}$, also consider a solution $u_2$ to:
\begin{equation*}
    \begin{cases}
        \p_tu_2(t,x)=\psi(t,-i\nabla)u_2(t,x) +f(t,x),\quad &(t,x)\in(0,T)\times\bR^d,\\
        u(0,x)=0,\quad &x\in\bR^d.
    \end{cases}
\end{equation*}    
    Given the linearity of these equations, the function $u:= u_1 + u_2$ solves:
\begin{equation*}
    \begin{cases}
        \p_tu(t,x)=\psi(t,-i\nabla)u(t,x),\quad &(t,x)\in(0,T)\times\bR^d,\\
        u(0,x)=u_0(x),\quad &x\in\bR^d.
    \end{cases}
\end{equation*}
A weighted estimate of the solution $u$ to \eqref{eqn:model eqn} can be derived from the weighted estimates of both $u_1$ and $u_2$.
However, the approach to a weighted estimate for $u$ is not entirely straightforward, as the weight class for $u_2$ is more restrictive than that for $u_1$.
This distinction underscores the novelty of our direct estimate for the initial value problem \eqref{eqn:model eqn}. 
Moreover, our weighted estimates introduce new insights, even for the model operator $\Delta^{\gamma/2}$, by virtue of the measure $\mu$'s generality. 
\end{enumerate}

We finish the introduction by presenting the second-order case of the main result for an easy application.
To the best of our knowledge, even the simplest case $\psi(t,-i\nabla)=\Delta$ of our results is new.
We choose a function $\Psi$ in the Schwartz class $\mathcal{S}(\mathbb{R}^d)$ whose Fourier transform $\mathcal{F}[\Psi]$ is nonnegative, supported in the annulus $\{\xi \in \bR^d : \frac{1}{2}\leq |\xi| \leq 2\}$, and $\sum_{j\in\mathbb{Z}} \mathcal{F}[\Psi](2^{-j}\xi) = 1$ for all $\xi \not=0$.
    Then we define the Littlewood--Paley projection operators $\Delta_j$ and $S_0$ as $\mathcal{F}[\Delta_j f](\xi) = \mathcal{F}[\Psi](2^{-j}\xi) \mathcal{F}[f](\xi)$ and $S_0f = \sum_{j\leq 0} \Delta_j f$, respectively.
\begin{thm}
\label{second-order case}
Let $T \in (0,\infty)$, $p,\nu\in(1,\infty)$, $q\in(0,\infty)$,  $w\in A_{\nu}(\bR)$ and $w'\in A_p(\bR^d)$.
    Assume that $a^{ij}(t)$ is a measurable function on $(0,T)$ for all $i,j \in \{1,\ldots,d\}$ and there exist positive constants $\kappa$ and $M$ such that
\begin{align}
							\label{ellip coefficient}
\kappa |\xi|^2 \leq \sum_{i,j=1}^{d}a^{ij}(t) \xi^i \xi^j \leq M |\xi|^2 \qquad \forall (t,\xi) \in (0,T) \times \bR^d.
\end{align}
Suppose that $u_0\in\cS'(\bR^d)$ satisfies 
$$
    \|u_0\|_{B_{p,q}^{W_2}(\bR^d,w'\,\mathrm{d}x)}:=\|S_0u_0\|_{L_p(\bR^d,w'\,\mathrm{d}x)}+\left(\sum_{j=1}^{\infty}2^{2qj}W(2^{-2j})\|\Delta_ju_0\|_{L_p(\bR^d,w'\,\mathrm{d}x)}^q\right)^{1/q}<\infty,
$$
where $W(\lambda):=\int_0^{\lambda}w(t)\mathrm{d}t$.
Then the initial value problem
\begin{equation}
\label{second eqn}							
        \begin{cases}
            \p_tu(t,x)=a^{ij}(t)u_{x^ix^j}(t,x),\quad &(t,x)\in(0,T)\times\bR^d,\\
							
        u(0,x)=u_0(x),\quad &x\in\bR^d.
        \end{cases}
\end{equation}
has a unique solution $u\in L_{q}\left((0,T),w\,\mathrm{d}t;H^{2}_p(\bR^d,w'\,\mathrm{d}x)\right)$.
Moreover, $u$ satisfies the following estimation:
\begin{align}
							\label{second a priori}
\int_{0}^T \|u(t,\cdot)\|_{ H^{2}_p(\bR^d,w'\,\mathrm{d}x)}^{ q} w(t) \mathrm{d}t
    \leq N\|u_0\|_{B_{p,q}^{W_2}(\bR^d,w'\,\mathrm{d}x)}^q,
\end{align}
where $[w]_{A_{\nu}(\bR)},[w']_{A_p(\bR^d)}\leq K$ and $N=N(d,\kappa,M,K,T,p,q)$.
In particular, if $\nu=q$, then
\begin{align}
							\label{second a priori2}
\int_{0}^T \|u(t,\cdot)\|_{ H^{2}_p(\bR^d,w'\,\mathrm{d}x)}^{ q} w(t) \mathrm{d}t
    \simeq \|u_0\|_{B_{p,q}^{W_2}(\bR^d,w'\,\mathrm{d}x)}^q,
\end{align}
\end{thm}
The proof will be given in Section \ref{22.12.27.14.47}.

This paper is organized as follows.
In Section \ref{22.12.27.14.47}, we state our main result, Theorem \ref{22.12.27.16.53}.
We will derive Theorem \ref{second-order case} from Theorem \ref{22.12.27.16.53}.
In Section \ref{22.12.28.13.34}, we provide useful examples and corresponding inhomogeneous results.
The proof of the main result (Theorem \ref{22.12.27.16.53}) is given in Section \ref{pf main thm}, and a proof of a key estimate to obtain the main theorem is given in Section \ref{sec:prop}. Finally, in the appendix, we present the weighted multiplier and Littlewood--Paley theories used to prove our main theorem.

\subsection*{Notation}

\begin{itemize}
    \item 
        For given $p \in [1,\infty)$, a normed space $F$, and a  measure space $(X,\mathcal{M},\mu)$,  $L_{p}(X,\cM,\mu;F)$ denotes the space of all $\mathcal{M}^{\mu}$-measurable functions $u : X \to F$ with the norm 
        \[
            \left\Vert u\right\Vert _{L_{p}(X,\cM,\mu;F)}:=\left(\int_{X}\left\Vert u(x)\right\Vert _{F}^{p}\mu(\mathrm{d}x)\right)^{1/p}<\infty,
        \]
        where $\mathcal{M}^{\mu}$ denotes the completion of $\cM$ with respect to the measure $\mu$. We also denote by $L_{\infty}(X,\cM,\mu;F)$ the space of all $\mathcal{M}^{\mu}$-measurable functions $u : X \to F$ with the norm
        $$
            \|u\|_{L_{\infty}(X,\cM,\mu;F)}:=\inf\left\{r\geq0 : \mu(\{x\in X:\|u(x)\|_F\geq r\})=0\right\}<\infty.
        $$
        If there is no confusion for the given measure and $\sigma$-algebra, we usually omit them.
    \item 
        For $\cO\subseteq \bR^d$, we denote by $\cB(\cO)$ the set of all Borel sets contained in $\cO$.

    \item  
        For $\cO\subset \fR^d$ and a normed space $F$, we denote by $C(\cO;F)$ the space of all $F$-valued continuous functions $u : \cO \to F$ with the norm 
        \[
            |u|_{C}:=\sup_{x\in O}|u(x)|_F<\infty.
        \]

\item 
We write $a \lesssim b$ if there is a positive constant $N$ such that $ a\leq N b$.
We use $a \approx b$ if $a \lesssim b$ and $b \lesssim a$. 
If we write $N=N(a,b,\cdots)$, this means that the
constant $N$ depends only on $a,b,\cdots$. 
A generic constant $N$ may change from a location to a location, even within a line. 
The dependence of a generic constant $N$ is usually specified in each statement of theorems, propositions, lemmas, and corollaries.
   
    \item 
        For $a,b\in \bR$,
        $$
            a \wedge b := \min\{a,b\},\quad a \vee b := \max\{a,b\},\quad \lfloor a \rfloor:=\max\{n\in\bZ: n\leq a\}.
        $$
    \item
        For $r>0$,
        $$
            B_r(x):=\{y\in\bR^d:|x-y|< r\},\quad
            \overline{B_r(x)}:=\{y\in\bR^d:|x-y|\leq r\}.
        $$
        
\item Let $\mu$ be a nonnegative Borel measure on $(0,\infty)$ and $c$ be a positive constant. 
We use the notation $\mu(c\,\mathrm{d}t)$ to denote the scaled measure defined by
$$
\int_{\bR^d}f(t)\mu(c\,\mathrm{d}t):=\int_{\bR^d}f(t/c)\mu(\mathrm{d}t).
$$
\end{itemize}

\mysection{Main result}
\label{22.12.27.14.47}

To state the main result, we need the following definitions of sequences and functions.
\begin{defn}
					\label{doubling sequence}
We say that a sequence $\boldsymbol{r} : \bZ \to \bR$ has {\bf a controlled difference} if
$$
\|\boldsymbol{r}\|_d:=\sup_{j\in\bZ}|\boldsymbol{r}(j+1)-\boldsymbol{r}(j)|<\infty.
$$
\end{defn}

\begin{defn}
				\label{weight}
Let $ a \in \bR$. We say that a function $\phi : (0,\infty) \to \bR$ is {\bf controlled by a sequence $\boldsymbol{\mu} : \bZ \to \bR$ in a $\gamma$-dyadic way with a parameter $a$} if there exists a positive constant $N_\phi$ such that
$$
\phi(2^{\gamma j}) \leq N_\phi 2^{j\gamma a}2^{-\boldsymbol{\mu}(j)},\quad \forall j\in\bZ.
$$	
\end{defn}
Given that we are dealing with a general weight, the interpretation of a solution to \eqref{eqn:model eqn}, even in its weak formulation, becomes elusive.
Nevertheless, it remains feasible to approximate the solution $u$ with functions of higher regularity, where such approximations fulfill the equation in the conventional, or strong, sense. 
Consequently, we need certain function spaces capable of approximating solutions, thereby clarifying the precise nature of a solution to \eqref{eqn:model eqn}. 
The inherent lack of smoothness in the symbol $\psi$ precludes the suitability of the classical Schwartz space for this context, prompting the adoption of a more expansive class that encompasses smooth functions that are locally integrable.
This broader class is denoted by $C_{p}^{1,\infty}([0,T]\times\bR^d)$.
Herein, we provide a detailed mathematical definition to articulate this concept rigorously.
 \begin{enumerate}[(i)]
 \item  The space $C_p^{\infty}([0,T]\times\bR^d)$ denotes the set of all $\cB([0,T]\times\bR^d)$-measurable functions $f$ on $[0,T]\times\bR^d$ such that for any multi-index $\alpha$ with respect to the space variable $x$, 
\begin{equation*}
D^{\alpha}_{x}f\in L_{\infty}([0,T];L_2(\bR^d)\cap L_p(\bR^d)).
\end{equation*}
\item The space $C_p^{1,\infty}([0,T]\times\bR^d)$ denotes the set of all $f\in C_p^{\infty}([0,T]\times\bR^d)$ such that
$\partial_tf\in C_p^{\infty}([0,T]\times\bR^d)$ and for any multi-index $\alpha$ with respect to the space variable $x$,
\begin{equation*}
D^{\alpha}_{x}f\in C([0,T]\times \bR^d),
\end{equation*}
where $\partial_tf(t,\cdot)$ denotes the right derivative and the left derivative at $t=0$ and $t=T$, respectively.
\end{enumerate}

Next, we introduce generalized weighted Besov spaces and Sobolev spaces. 
\begin{defn}\label{def:bessel besov} 
Let $p\in(1,\infty)$, $q\in(0,\infty)$, and $w\in A_p(\bR^d)$.
For sequences $\boldsymbol{r}: \bN \to \bR$ and $\tilde{\boldsymbol{r}}: \bZ \to \bR$, we can define the following  Besov and Sobolev spaces with variable smoothness.
    \begin{enumerate}[(i)]
        \item 
        ((Inhomogeneous) Weighted Bessel potential space)
        We denote by $H_p^{\boldsymbol{r}}(\bR^d,w\,\mathrm{d}x)$ the space of all  $f \in \cS'(\bR^d)$ satisfying
        $$
            \|f\|_{H_p^{\boldsymbol{r}}(\bR^d,w\,\mathrm{d}x)}:=\|S_0f\|_{L_p(\bR^d,w\,\mathrm{d}x)}+\left\|\left(\sum_{j=1}^{\infty}|2^{\boldsymbol{r}(j)}\Delta_jf|^{2}\right)^{1/2}\right\|_{L_p(\bR^d,w\,\mathrm{d}x)}<\infty,
        $$
where $\cS'(\bR^d)$ denotes the tempered distributions on $\cS(\bR^d)$. 
        \item
        ((Inhomogeneous) Weighted Besov space)
        $B_{p,q}^{\boldsymbol{r}}(\bR^d,w\,\mathrm{d}x)$ denotes the space of all  $f \in \cS'(\bR^d)$  satisfying
        $$
            \|f\|_{B_{p,q}^{\boldsymbol{r}}(\bR^d,w\,\mathrm{d}x)}:=\|S_0f\|_{L_p(\bR^d,w\,\mathrm{d}x)}+\left(\sum_{j=1}^{\infty}2^{q\boldsymbol{r}(j)}\|\Delta_jf\|_{L_p(\bR^d,w\,\mathrm{d}x)}^q\right)^{1/q}<\infty.
        $$
        \item 
        ((Homogeneous) Weighted Bessel potential space) 
We use $\dot{H}_{p,q}^{\tilde{\boldsymbol{r}}}(\bR^d,w\,\mathrm{d}x)$ to denote the space of all  $f \in \cS'(\bR^d)$ satisfying
        $$
            \|f\|_{\dot{H}_{p,q}^{\tilde{\boldsymbol{r}}}(\bR^d,w\,\mathrm{d}x)}:=\left\|\left(\sum_{j\in\bZ}|2^{\tilde{\boldsymbol{r}}(j)}\Delta_jf|^{2}\right)^{1/2}\right\|_{L_p(\bR^d,w\,\mathrm{d}x)}<\infty.
        $$
        \item
        ((Homogeneous) Weighted Besov space)
         $\dot{B}_{p,q}^{\tilde{\boldsymbol{r}}}(\bR^d,w\,\mathrm{d}x)$  denotes the space of all  $f \in \cS'(\bR^d)$ satisfying
        $$
            \|f\|_{\dot{B}_{p,q}^{\tilde{\boldsymbol{r}}}(\bR^d,w\,\mathrm{d}x)}:=\left(\sum_{j\in\bZ}2^{q\tilde{\boldsymbol{r}}(j)}\|\Delta_jf\|_{L_p(\bR^d,w\,\mathrm{d}x)}^q\right)^{1/q}<\infty.
        $$
    \end{enumerate}
 \end{defn}
\begin{rem}
Note that  if $\boldsymbol{r}(j) = sj$ for $s\in\bR$, then the space $H_p^{\boldsymbol{r}}(\bR^d, w\,\mathrm{d}x)$ is equivalent to 
the classical weighted Bessel potential space $H_p^s(\bR^d, w\,\mathrm{d}x)$ whose norm is given by 
$$
    \| f \|_{H_p^s(\bR^d,w\,\mathrm{d}x)} := \| (1 - \Delta)^{s/2} f \|_{L_p(\bR^d,w\,\mathrm{d}x)}
$$
(see Corollary \ref{classical sobo}).
\end{rem}

We can relate a constant to each $w \in A_p(\bR^d)$ to characterize sufficient smoothness of our symbols.
For each  $w \in A_p(\bR^d)$, we define
\begin{equation}
\label{2021-01-19-01}
R_{p,d}^{w} := \sup \left\{ p_0 \in (1,2] :  w \in A_{p/p_0}(\bR^d) \right\}
\end{equation}
and say that $R_{p,d}^{w}$ is \textbf{the regularity constant of the weight $w\in A_p(\bR^d)$}.
Due to  the reverse H\"older property of Muckenhoupt's class, $R_{p,d}^{w}$ is well-defined, \textit{i.e.} $R_{p,d}^{w} \in (1,2]$.
We assume that $\psi$ has a $\left( \left\lfloor \frac{d}{R_{p,d}^{w}}\right\rfloor+2\right)$-times regular upper bound with $(\gamma,M)$.

Our main result is the following sufficient condition for the existence and uniqueness of a solution in weighted $L_p$-spaces with variable smoothness.
\begin{thm}
    \label{22.12.27.16.53}
    Let $T \in (0,\infty)$, $p\in(1,\infty)$, $q\in(0,\infty)$, $w\in A_p(\bR^d)$, $\mu$ be a nonnegative Borel measure on $(0,\infty)$, and 
    $\boldsymbol{r}:\bZ\to(-\infty,\infty)$, $\boldsymbol{\mu}:\bZ\to(-\infty,\infty)$ be sequences having a controlled difference.
    Suppose that $\psi$ is a symbol satisfying the ellipticity condition with $(\gamma,\kappa)$ and having the $\left( \left\lfloor \frac{d}{R_{p,d}^{w}}\right\rfloor+2\right)$-times regular upper bound with $(\gamma,M)$ $($see \eqref{condi:ellipticity}, \eqref{condi:reg ubound}, and \eqref{2021-01-19-01}$)$. 
    Additionally, assume that the Laplace transform of $\mu$ is controlled by a sequence $\boldsymbol{\mu}$ in a $\gamma$-dyadic way with parameter $a\in(0,\infty)$, \textit{i.e.}
\begin{align}
									\label{20230210 01}
\cL_{\mu}(2^{\gamma j})
:= \int_0^\infty \exp\left( - 2^{\gamma j} t \right) \mu(\mathrm{d}t) \leq N_{\cL_\mu} \cdot 2^{j\gamma a}2^{-\boldsymbol{\mu}(j)},\quad \forall j\in\bZ.
\end{align}
Then for any $u_0\in B_{p,q}^{\boldsymbol{r}-\frac{\boldsymbol{\mu}}{q}}(\bR^d,w\,\mathrm{d}x)$, there exists a unique solution 
$$
u\in L_q\left((0,T),t^a \mu\left(\frac{(1\wedge q) \kappa}{16^\gamma}\mathrm{d}t\right);H_p^{\boldsymbol{r}}(\bR^d,w\,\mathrm{d}x)\right)
$$
to \eqref{eqn:model eqn}.
    Moreover, if $u \in C_{p}^{1,\infty}([0,T]\times\bR^d) $ and $u_0 \in C_c^\infty(\bR^d)$, then the following \textit{a priori} estimates hold:
    \begin{align}
							\label{main a priori est 0}
        \int_{0}^T\left\|u(t,\cdot)\right\|_{H_p^{\boldsymbol{r}}(\bR^d,w\,\mathrm{d}x)}^q  
        t^a\mu\left(\frac{(1\wedge q) \kappa }{16^\gamma}\mathrm{d}t\right) 
       \leq N(1+\mu_{a,T,\kappa,\gamma,q})\|u_0\|^q_{B_{p,q}^{\boldsymbol{r}-\frac{\boldsymbol{\mu}}{q}}(\bR^d,w\,\mathrm{d}x)},
    \end{align}
    and
    \begin{equation}
    \label{main a priori est}
        \begin{aligned}
        &\int_{0}^T\left\|\psi(t,-i\nabla)u(t,\cdot)\right\|_{H_p^{\boldsymbol{r}-\boldsymbol{\gamma}}(\bR^d,w\,\mathrm{d}x)}^q t^a \mu\left(\frac{(1\wedge q) \kappa}{16^\gamma}\mathrm{d}t\right)\\
        &+\int_{0}^T\left\|\Delta^{\gamma/2}u(t,\cdot)\right\|_{H_p^{\boldsymbol{r}-\boldsymbol{\gamma}}(\bR^d,w\,\mathrm{d}x)}^q t^a \mu\left(\frac{(1\wedge q) \kappa}{16^\gamma}\mathrm{d}t\right)\\
        \leq& N\|u_0\|^q_{\dot B_{p,q}^{\boldsymbol{r}-\frac{\boldsymbol{\mu}}{q}}(\bR^d,w\,\mathrm{d}x)},
    \end{aligned}
    \end{equation}
    where $[w]_{A_p(\bR^d)}\leq K$, $N=N(a,\|\boldsymbol{r}\|_d, \|\boldsymbol{\mu}\|_d, N_{\cL_\mu},d,\gamma,\kappa,K,M,p,q)$, and
    \begin{align*}
\mu_{a,T,\kappa,\gamma,q}=\int_0^T  t^a\mu\left(\frac{(1\wedge q) \kappa}{16^\gamma}\mathrm{d}t\right).
\end{align*}
\end{thm}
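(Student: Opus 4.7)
The plan is to first establish the \emph{a priori} estimates \eqref{main a priori est 0}--\eqref{main a priori est} under the smoothness assumptions $u\in C_p^{1,\infty}([0,T]\times\bR^d)$ and $u_0\in C_c^\infty(\bR^d)$, and then to deduce existence and uniqueness in the general function spaces via a standard approximation and extension argument. Since every smooth classical solution admits the Fourier representation
\begin{equation*}
\Delta_j u(t,x) = \cF^{-1}\!\left[\exp\!\left(\int_0^t\psi(r,\xi)dr\right)\cF[\Psi](2^{-j}\xi)\cF[u_0](\xi)\right]\!(x),\qquad j\in\bZ,
\end{equation*}
the whole analysis reduces to understanding these dyadic pieces separately and then reassembling them in the norms of Definition~\ref{def:bessel besov}.

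The core step is a frequency-localized estimate of the form
\begin{equation*}
\|\Delta_j u(t,\cdot)\|_{L_p(\bR^d,w\,dx)} \leq N\, e^{-c\kappa\, 2^{\gamma j}\,t}\,\|\Delta_j u_0\|_{L_p(\bR^d,w\,dx)},\qquad t>0,
\end{equation*}
valid with constants $N,c$ depending only on the parameters listed in the theorem. To prove this I would fix $t$ and view the full Fourier multiplier $m_{j,t}(\xi):=\exp(\int_0^t\psi(r,\xi)dr)\cF[\Psi](2^{-j}\xi)$ as a symbol supported in $\{|\xi|\sim 2^j\}$. The ellipticity condition \eqref{condi:ellipticity} gives the uniform bound $|e^{\int_0^t\psi(r,\xi)dr}|\leq e^{-\kappa|\xi|^\gamma t}$, while Leibniz's rule and the regular upper bound \eqref{condi:reg ubound} produce, for $|\alpha|\leq n:=\lfloor d/R^w_{p,d}\rfloor+2$, factors of the form $(t|\xi|^\gamma)^k|\xi|^{-|\alpha|}$ which are absorbed by the exponential decay once combined with $e^{-\kappa|\xi|^\gamma t}$. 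Thus $|\xi|^{|\alpha|}|D_\xi^\alpha m_{j,t}(\xi)|\lesssim e^{-c\kappa\,2^{\gamma j}\,t}$, and the weighted Mihlin-type multiplier theorem from the appendix (which is exactly why $n=\lfloor d/R^w_{p,d}\rfloor+2$ is sharp, through the open-ended property $w\in A_{p/R^w_{p,d}}$) yields the claimed dyadic decay. By a standard vector-valued extension of the same multiplier theorem one also obtains the square-function analogue needed for the Bessel potential norm.

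Next I would integrate the dyadic bound against the time measure. Raising to the $q$-th power and using $(e^{-c\kappa 2^{\gamma j}t})^{q}\leq e^{-c(1\wedge q)\kappa 2^{\gamma j}t}$ (which explains the scaling factor $(1\wedge q)\kappa/16^\gamma$ in the statement), Tonelli gives
\begin{equation*}
\int_0^T\|\Delta_j u(t,\cdot)\|_{L_p(w)}^{\,q}\,t^a\,\mu\!\left(\tfrac{(1\wedge q)\kappa}{16^\gamma}dt\right)\leq N\,\|\Delta_j u_0\|_{L_p(w)}^{\,q}\int_0^\infty e^{-c\, 2^{\gamma j}t}\,t^a\,\mu(dt).
\end{equation*}
The inner integral is essentially a derivative in $\lambda$ of $\cL_\mu(\lambda)$, so assumption \eqref{20230210 01} yields $\int_0^\infty e^{-c\,2^{\gamma j}t}t^a\mu(dt)\lesssim 2^{-\gamma ja}\cL_\mu(2^{\gamma j}/c)\lesssim 2^{-\boldsymbol{\mu}(j)}$, with constants depending on $\|\boldsymbol{\mu}\|_d$ and $N_{\cL_\mu}$. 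Multiplying by $2^{q\boldsymbol{r}(j)}$, summing in $j$ and comparing with Definition~\ref{def:bessel besov} gives precisely the Besov norm $\|u_0\|_{B^{\boldsymbol{r}-\boldsymbol{\mu}/q}_{p,q}(\bR^d,w\,dx)}^q$, establishing \eqref{main a priori est 0}; the low-frequency piece $S_0 u$ is estimated by the same argument without the dyadic parameter $j$, which produces the additive term $1+\mu_{a,T,\kappa,\gamma,q}$. Replacing $\boldsymbol{r}$ by $\boldsymbol{r}-\boldsymbol{\gamma}$ and applying the multiplier theorem to $\psi(t,-i\nabla)$ and $\Delta^{\gamma/2}$ dyadically yields \eqref{main a priori est} in its homogeneous form.

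Finally, to pass from the \emph{a priori} estimate to existence and uniqueness, I would regularize both the symbol $\psi$ (convolving in time with a mollifier to obtain smooth symbols to which a classical theory applies) and the initial datum $u_0\in B^{\boldsymbol{r}-\boldsymbol{\mu}/q}_{p,q}(\bR^d,w\,dx)$ by Schwartz functions; the uniform \emph{a priori} bounds let me extract a limit in the target space and verify that the limit solves the equation in the required distributional/strong sense. Uniqueness follows because any solution with zero data is controlled by the right-hand side of \eqref{main a priori est 0}, which vanishes. The main obstacle, and the technical heart of the argument, is the dyadic multiplier estimate of the second paragraph: the symbol $\psi$ is merely measurable in $t$, so one cannot differentiate in time, and the counting of derivatives in $\xi$ must be pushed exactly to $\lfloor d/R^w_{p,d}\rfloor+2$ to remain consistent with the open-endedness of the Muckenhoupt class. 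Closely related, but more routine, is verifying that the Laplace bound \eqref{20230210 01}, together with the controlled-difference hypothesis on $\boldsymbol{\mu}$, is strong enough to swap sums and integrals cleanly in both $q\geq 1$ and $q<1$ regimes, which is the reason for the scaled measure $\mu((1\wedge q)\kappa/16^\gamma\,dt)$ in the statement.
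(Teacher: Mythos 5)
Your outline reproduces the broad architecture of the paper (dyadic localization of the semigroup, exponential decay $e^{-c\kappa 2^{\gamma j}t}$ on each block, Laplace-transform control of the time measure, then density), but the two places where you lean on ``standard'' results are exactly where the paper's real work lies, and as written both steps have genuine gaps. First, the frequency-localized bound $\|\Delta_j u(t,\cdot)\|_{L_p(\bR^d,w\,dx)}\lesssim e^{-c\kappa 2^{\gamma j}t}\|\Delta_j u_0\|_{L_p(\bR^d,w\,dx)}$ does not follow from ``the weighted Mihlin-type multiplier theorem from the appendix'': Proposition \ref{21.02.24.16.49} requires the H\"ormander condition \eqref{22.05.12.13.14} for \emph{all} $|\alpha|\leq d$, and its hypothesis is not weakened by the weight. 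Since $R_{p,d}^w\in(1,2]$, for $d\geq 5$ one has $\left\lfloor d/R_{p,d}^w\right\rfloor+2<d$, so the assumed regular upper bound does not let you verify that multiplier hypothesis; nor can you fall back on Young's inequality with an $L_1$ kernel bound, because translations are not uniformly bounded on $L_p(\bR^d,w\,dx)$ for general $w\in A_p$. The mechanism by which the regularity constant of $w$ buys the reduced derivative count is precisely the paper's technical core: weighted pointwise/$L_p$ kernel estimates (Theorem \ref{22.02.15.11.27}), mean-oscillation bounds of $\cT_{t,0}^{\varepsilon,j}f$ by $\left(\bM(|f|^{p_0})\right)^{1/p_0}$ with $p_0\leq R_{p,d}^w$ (Proposition \ref{prop:maximal esti}), and then the weighted Fefferman--Stein and Hardy--Littlewood theorems for $w\in A_{p/p_0}$ (Corollary \ref{main ingra}). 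Your proposal assumes this step rather than proving it.

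Second, the time integration as you perform it proves a different estimate from \eqref{main a priori est 0}. Integrating each dyadic piece separately, multiplying by $2^{q\boldsymbol{r}(j)}$ and summing in $j$ controls $\int_0^T\|u(t,\cdot)\|^q_{B^{\boldsymbol{r}}_{p,q}(\bR^d,w\,dx)}\,t^a\mu(\cdot\,dt)$, whereas the left-hand side of \eqref{main a priori est 0} carries the $H^{\boldsymbol{r}}_p$ (square-function) norm; the embedding needed to bridge them holds only when $q\leq\min(p,2)$, while the theorem allows every $q\in(0,\infty)$. The paper instead keeps the sum over $j$ inside a single $L_p(w\,dx)$ norm (via $\|u(t,\cdot)\|_{H_p^{\boldsymbol{\gamma}}}\lesssim\|u\|_{L_p(w)}+\|\Delta^{\gamma/2}u\|_{L_p(w)}$ after the reduction $\boldsymbol{r}=\boldsymbol{\gamma}$ through Proposition \ref{22.05.03.11.34}) and only then raises the sum $\sum_j$ to the power $q$; handling $(\sum_j\cdots)^q$ is done by subadditivity for $q\leq1$ and, for $q>1$, by a H\"older argument on the splitting $\{2^{j\gamma}t\leq1\}$, $\{2^{j\gamma}t>1\}$ with an auxiliary exponent $b$ depending on $a,\gamma,q$ --- this is where $a>0$, the controlled difference of $\boldsymbol{\mu}$, and the scaling constant $(1\wedge q)\kappa/16^{\gamma}$ genuinely enter, and it is not the routine interchange you describe. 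Finally, for existence you mollify $\psi$ in time, but passing to the limit then requires stability in the coefficient (the difference of two such solutions solves an inhomogeneous equation not covered by \eqref{main a priori est 0}); the paper avoids this by quoting classical solvability for time-measurable symbols with $C_c^\infty$ data and approximating only $u_0$, for which the a priori estimate alone closes the argument.
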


\begin{rem}
    Suppose that the symbol $\psi$ admits a $\left( \left\lfloor \frac{d}{R_{p,d}^{w}} \right\rfloor + 2 \right)$-times regular upper bound with parameters $(\gamma, M)$. 
    Then there exists a constant $N>0$ such that
    \begin{align}
        \label{25.06.14.19.43}
        \|\psi(t, -i\nabla) f\|_{L_p(\mathbb{R}^d, w\,\mathrm{d}x)} \leq N \|\Delta^{\gamma/2} f\|_{L_p(\mathbb{R}^d, w\,\mathrm{d}x)}, \quad \forall f \in H_p^{\gamma}(\mathbb{R}^d, w\,\mathrm{d}x).
    \end{align}
    Indeed, define
    $$
        m(t, \xi) := \frac{\psi(t, \xi)}{|\xi|^{\gamma}}.
    $$
    Then $m$ satisfies the following uniform bound:
    $$
        |D^\alpha_\xi m(t, \xi)| \leq N(\alpha, \gamma, M) |\xi|^{-|\alpha|}
    $$
    for all multi-indices $\alpha \in \mathbb{N}^d$ with $|\alpha| \leq \left\lfloor \frac{d}{R_{p,d}^{w}} \right\rfloor + 2$. 
    Since
    $$
        \frac{d}{\left\lfloor \frac{d}{R_{p,d}^{w}} \right\rfloor + 2} < R_{p,d}^{w} \leq p,
    $$
    we can apply \cite[Theorem 4]{kurtz1979results} to obtain the estimate \eqref{25.06.14.19.43}.

    Furthermore, if $\psi$ also satisfies the ellipticity condition with  $(\gamma, M)$, then considering the symbol
    $$
        m(t, \xi) := \frac{|\xi|^{\gamma}}{\psi(t, \xi)},
    $$
    one can similarly deduce from \cite[Theorem 4]{kurtz1979results} that
    \begin{align}
        \label{25.06.14.19.49}
        \|\Delta^{\gamma/2} f\|_{L_p(\mathbb{R}^d, w\,\mathrm{d}x)} \leq N \|\psi(t, -i\nabla) f\|_{L_p(\mathbb{R}^d, w\,\mathrm{d}x)}, \quad \forall f \in H_p^{\gamma}(\mathbb{R}^d, w\,\mathrm{d}x).
    \end{align}
    Therefore, under the regularity and ellipticity assumptions, the operator $\psi(t, -i\nabla)$ is equivalent to the fractional Laplacian $\Delta^{\gamma/2}$ in $L_p(\mathbb{R}^d, w\,\mathrm{d}x)$.
\end{rem}

The proof of this main theorem will be given in Section \ref{pf main thm}.
We finish the section by presenting the definition of our solution.
\begin{defn}[Solution]
								\label{def sol}
Assume that all parameters are given as in Theorem \ref{22.12.27.16.53}.
We say that $u \in L_q\left((0,T),t^a \mu\left(\frac{(1\wedge q) \kappa }{16^\gamma}\mathrm{d}t\right);H_p^{\boldsymbol{r}}(\bR^d,w\,\mathrm{d}x)\right)$ is a solution to \eqref{eqn:model eqn} if 
there exists a sequence $u_n\in C_p^{1,\infty}([0,T]\times\bR^d)$ such that $u_n(0,\cdot)\in C_c^{\infty}(\bR^d)$ and
\begin{equation}
							\label{202301014 01}
\begin{gathered}
\partial_tu_n(t,x)=\psi(t,-i\nabla)u_n(t,x),\quad \forall (t,x)\in(0,T)\times\bR^d,\\
u_n(0,\cdot)\to u_0 \quad\text{in}\quad B_{p,q}^{\boldsymbol{r}-\frac{\boldsymbol{\mu}}{q}}(\bR^d,w\,\mathrm{d}x),\\
u_n\to u\quad \text{in}\quad L_q\left((0,T),t^a \mu\left(\frac{(1\wedge q) \kappa }{16^\gamma}\mathrm{d}t\right);H_p^{\boldsymbol{r}}(\bR^d,w\,\mathrm{d}x)\right)
\end{gathered}
\end{equation}
as $n\to\infty$. 
Due to this definition, we have \eqref{main a priori est 0} for any solution 
$$
u\in L_q\left((0,T),t^a \mu\left(\frac{(1\wedge q) \kappa}{16^\gamma}\mathrm{d}t\right);H_p^{\boldsymbol{r}}(\bR^d,w\,\mathrm{d}x)\right)
$$
to \eqref{eqn:model eqn} with the corresponding $u_0 \in B_{p,q}^{\boldsymbol{r}-\frac{\boldsymbol{\mu}}{q}}(\bR^d,w\,\mathrm{d}x)$.
\end{defn}
\begin{rem}
Since our equation is linear, by using \textit{a priori} estimate, we may consider that our solution is a strong solution in $L_q\left((0,T),t^a \mu\left(\frac{(1\wedge q) \kappa}{16^\gamma} \mathrm{d}t\right);H_p^{\boldsymbol{r}}(\bR^d,w\,\mathrm{d}x)\right)$.
Indeed, due to \eqref{main a priori est}, for all $n, m \in \bN$, we have 
        \begin{align*}
&\int_{0}^T\left\|\partial_t(u_n - u_m)(t,\cdot)\right\|_{H_p^{\boldsymbol{r}-\boldsymbol{\gamma}}(\bR^d,w\,\mathrm{d}x)}^q t^a \mu\left(\frac{(1\wedge q) \kappa}{16^\gamma}\mathrm{d}t\right)\\
        =&\int_{0}^T\left\|\psi(t,-i\nabla)(u_n - u_m)(t,\cdot)\right\|_{H_p^{\boldsymbol{r}-\boldsymbol{\gamma}}(\bR^d,w\,\mathrm{d}x)}^q t^a \mu\left(\frac{(1\wedge q) \kappa}{16^\gamma}\mathrm{d}t\right)\\
        \leq& N\|u_n(0,\cdot) - u_m(0,\cdot)\|^q_{B_{p,q}^{\boldsymbol{r}-\frac{\boldsymbol{\mu}}{q}}(\bR^d,w\,\mathrm{d}x)}.
    \end{align*}
Thus defining  $\partial_tu$ and $\psi(t,-i\nabla)u$ as the limits of $\partial_tu_n$ and $\psi(t,-i\nabla)u_n$ in 
$$
L_q\left((0,T),t^a \mu\left(\frac{(1\wedge q) \kappa}{16^\gamma}\mathrm{d}t\right);H_p^{\boldsymbol{r}-\boldsymbol{\gamma}}(\bR^d,w\,\mathrm{d}x)\right),
$$
respectively, we understand that our solution $u$ is a strong solution. 
However, the limits $\partial_tu$ and $\psi(t,-i\nabla)u$ are not well-defined in general, which means that they could be dependent on the choice of a sequence $u_n$ approximating to $u$.  
To make the limits $\partial_tu$ and $\psi(t,-i\nabla)u$ uniquely determined, we need the condition $\boldsymbol{r}-\boldsymbol{\gamma} \geq 0$ and another condition on a measure $\mu$, which will be specified in the next remark. 
\end{rem}

\begin{rem}
To show that our solution becomes the classical weak solution, we need an extra condition on a measure $\mu$.
For simplicity, we may assume that the scaling constant $\frac{(1\wedge q) \kappa}{16^\gamma}$ is 1.
Additionally, assume that the measure $\mu(\mathrm{d}t)$ has a density $\mu(t)$ and $t^{-a} (1/\mu)(t)$ is locally in $L_{q/(q-1)}$, \textit{i.e.} $\mu(\mathrm{d}t)=\mu(t)\mathrm{d}t$ and 
$\int_0^{T_0} t^{-\frac{aq}{q-1}} (\mu(t))^{-\frac{q}{q-1}} \mathrm{d}t < \infty$ for all $T_0 \in (0,T)$.
Let $u$ be a solution to \eqref{eqn:model eqn}. Then there exists a $u_n\in C_p^{1,\infty}([0,T]\times\bR^d)$  so that \eqref{202301014 01} holds.
Then for any $n \in \bN$ and $\varphi \in C_c^\infty ( (0,T) \times \bR^d)$, we have
\begin{align}
						\label{20230114 02}
-\int_0^T\int_{\bR^d} u_n(t,x) \partial_t\varphi(t,x) \mathrm{d}x\mathrm{d}t   = \int_0^T \int_{\bR^d} u_n(t,x) \overline{\psi}(t,-i\nabla) \varphi(t,x) \mathrm{d}x\mathrm{d}t ,
\end{align}
where 
\begin{align*}
\overline{\psi}(t,-i\nabla)u(t,x)= \cF^{-1}\left[ \overline{\psi(t,\cdot)} \cF[u(t,\cdot)] \right] (x)
\end{align*}
and $\overline{\psi(t,\xi)}$ denotes the complex conjugate of $\psi(t,\xi)$.
Moreover, applying H\"older's inequality, we have
\begin{align*}
&\left|\int_0^T\int_{\bR^d} u_n(t,x) \partial_t\varphi(t,x)  \mathrm{d}x \mathrm{d}t -\int_0^T\int_{\bR^d} u(t,x) \partial_t\varphi(t,x) \mathrm{d}x \mathrm{d}t \right| \\
&\leq  N\| u_n -u\|_{L_q((0,T),t^a \mu\,\mathrm{d}t;L_p(\bR^d,w\,\mathrm{d}x))} \left(\int_0^{T_0} t^{-\frac{aq}{q-1}} (\mu(t))^{-\frac{q}{q-1}} \mathrm{d}t\right)^{\frac{q-1}{q}},
\end{align*}
where $T_0$ is a positive number so that $\partial_t \varphi(t,x) = 0$ for all  $t \in [T_0,T)$.
Thus
\begin{align*}
\lim_{n \to \infty}\int_0^T\int_{\bR^d} u_n(t,x) \partial_t\varphi(t,x)\mathrm{d}x \mathrm{d}t 
=\int_0^T\int_{\bR^d} u(t,x) \partial_t\varphi(t,x) \mathrm{d}x \mathrm{d}t.
\end{align*}
Similarly, 
\begin{align*}
\lim_{n\to \infty}\int_0^T \int_{\bR^d} u_n(t,x) \overline{\psi}(t,-i\nabla) \varphi(t,x) \mathrm{d}x \mathrm{d}t
=\int_0^T \int_{\bR^d} u(t,x) \overline{\psi}(t,-i\nabla) \varphi(t,x) \mathrm{d}x \mathrm{d}t.
\end{align*}
Finally, taking the limit in \eqref{20230114 02}, we show that our solution becomes a classical weak solution, \textit{i.e.}
\begin{align}
						\label{20230114 03}
-\int_0^T\int_{\bR^d} u(t,x) \partial_t\varphi(t,x) \mathrm{d}x \mathrm{d}t  = \int_0^T \int_{\bR^d} u(t,x) \overline{\psi}(t,-i\nabla) \varphi(t,x) \mathrm{d}x \mathrm{d}t
\end{align}
for all $\varphi \in C_c^\infty ( (0,T) \times \bR^d)$.
\end{rem}

\begin{rem}
We emphasize that \eqref{main a priori est 0} and \eqref{main a priori est} can be distinguished. 
In other words, there could be different two measures $\mu_1$ and $\mu_2$ so that $\boldsymbol{\mu_1}(j) = \boldsymbol{\mu_2}(j)$ for all $j \in \bN$ but there exists a $k \in \bZ$ such that $\boldsymbol{\mu_1}(k) \neq \boldsymbol{\mu_2}(k)$.

Let $\mu_1(\mathrm{d}t) := |t|^{b_1}\mathrm{d}t$ and $\mu_2(\mathrm{d}t) := \left(|t|^{b_1} + |t|^{b_2}\right)\mathrm{d}t$. 
Since both $|t|^{b_1}$ and $|t|^{b_1}+|t|^{b_2}$ are in $A_{\infty}(\bR)$, by Lemma \ref{24.02.16.16.11}-$(ii)$ and $(iii)$, we have
\begin{align*}
&\mu_1((0,2^{-j\gamma}))\simeq2^{-j\gamma(b_1+1)}\implies\boldsymbol{\mu_1}(j)=  j \gamma (a+b_1+1)  \quad \forall j \in \bZ,\\
&\mu_2((0,2^{-j\gamma}))\simeq2^{-j \gamma (b_1+1)} +  2^{-j \gamma (b_2+1)}.
\end{align*}
If $-1<b_1 < b_2  <\infty$, then
\begin{align*}
\mu_2((0,2^{-j\gamma})) \lesssim
\begin{cases}
&2^{-j \gamma (b_1+1)}  \quad \text{if}~ j \in \bN, \\
&2^{-j \gamma (b_2+1)}  \quad \text{if $j$ is a non-positive integer}.
\end{cases}
\end{align*}
Therefore, if we put 
\begin{align*}
\boldsymbol{\mu_2}(j):=
\begin{cases}
&j \gamma (a+b_1+1) \quad \text{if}~ j \in \bN, \\
&j \gamma (a+b_2+1)  \quad \text{if $j$ is a non-positive integer},
\end{cases}
\end{align*}
then $\boldsymbol{\mu_1}(j)=\boldsymbol{\mu_2}(j)$ for all $j\in\bN$, but, $\boldsymbol{\mu_1}\neq\boldsymbol{\mu_2}$.
\end{rem}

\vspace{3mm}

\mysection{Examples and Applications}
\label{22.12.28.13.34}
In this section, we present some sufficient conditions for $(\mu,\boldsymbol{\mu})$ satisfying \eqref{20230210 01} and corresponding well-posedness results.
We also present applications of our results.
\begin{lem}
\label{24.02.16.16.11}
    Let $\mu$ be a nonnegative Borel measure defined on $(0,\infty)$ and $\mu^{-a_0}(\mathrm{d}t):=t^{-a_0}\mu(\mathrm{d}t)$.
    Suppose that $a_0$ is a nonnegative constant that the Laplace transform $\mathcal{L}_{\mu^{-a_0}}$ is well-defined.
 \begin{enumerate}[(i)]
     \item If
     \begin{align}
									\label{20230210 03}
\inf_{j \in \bZ}\frac{\cL_{\mu^{-a_0}}(2^{\gamma (j+1)})}{\cL_{\mu^{-a_0}}(2^{\gamma j})}  >0,
\end{align}
then $
\boldsymbol{\mu}(j) := j\gamma a  -\log_2\left( \cL_{\mu^{-a_0}}(2^{\gamma j}) \right) 
$ is a sequence having the controlled difference, and $(\mu^{-a_0},\boldsymbol{\mu})$ satisfies \eqref{20230210 01} for $a_0\in[0,\infty)$.
\item Suppose that for any $k\in(1,\infty)$, there exist positive constants $b_k$ and $B_k$ such that
\begin{equation}
\label{22.04.22.16.39}
    0<b_k\leq \frac{\mu((0,\theta))}{\mu((0,k\theta))}\leq B_k<1,\quad \forall \theta\in(0,\infty).
\end{equation}
Then $\boldsymbol{\mu}(j):=(j\gamma (a-a_0)-\log_{2}(\mu(0,2^{-j\gamma})))$ is a sequence having the controlled difference, and $(\mu^{-a_0},\boldsymbol{\mu})$ satisfies \eqref{20230210 01} for $a_0\in[0,-\log_2(B_2))$.
\item If $w'\in A_{\infty}(\bR)$, then $w'$ satisfies \eqref{22.04.22.16.39}.
\item Let $\delta(t)$ be a nonnegative locally integrable function, $\eta(t):=\int_0^t\delta(s)\mathrm{d}s$, $\eta^{-1}(t):=\inf\{s>0:\eta(s)\geq t\}$  and $\mu(\mathrm{d}t):=1_{[0,T']}(t)/(\delta(\eta^{-1}))\mathrm{d}t$.
Suppose that there exist positive constants $\beta,t_0,N_0$ such that
\begin{equation}
\label{24.02.06.14.00}
    \left|\left\{t\in[0,t_0]:h\leq \eta(t)<4h\right\}\right|\leq N_0h^{1/\beta}.
\end{equation}
Then $\boldsymbol{\mu}(j):=j\gamma (a-a_0)+\frac{j\gamma}{\beta}$ is a sequence having the controlled difference, and $(\mu^{-a_0},\boldsymbol{\mu})$ satisfies \eqref{20230210 01} for $a_0\in[0,1/\beta)$.
\end{enumerate}   
\end{lem}
\begin{proof}
$(i)$ Since
\begin{align*}
\cL_{\mu^{-a_0}}(2^{\gamma j})
=2^{\log_2\left( \cL_{\mu^{-a_0}}(2^{\gamma j}) \right)}
=2^{j\gamma a} 2^{-j\gamma a + \log_2\left( \cL_{\mu^{-a_0}}(2^{\gamma j}) \right)}=2^{j\gamma a}2^{-\boldsymbol{\mu}(j)},
\end{align*}
\eqref{20230210 01} holds with $N_{\cL_{\mu^{-a_0}}}=1$.
The sequence $\boldsymbol{\mu}$ has a controlled difference due to \eqref{20230210 03} 
and the fact that $\cL_{\mu}(2^{\gamma j})$ is non-increasing as $j$ increases.

$(ii)$ Choose   
\begin{align}
								\label{20230128 100}
a_0 \in \left[0, -\log_2B_2 \right).
\end{align}
We claim that 
\begin{equation}
\label{22.10.31.16.02}
\cL_{\mu^{-a_0}}(\lambda) \simeq  \lambda^{a_0}\mu((0,1/\lambda)) \quad \forall \lambda \in (0,\infty).
\end{equation}
Indeed, by \eqref{22.04.22.16.39} and the ratio test,
\begin{equation}
\label{2023012401}
\begin{aligned}
\int_{1}^{\infty}\mathrm{e}^{-t}\mu^{-a_0}(dt)
&\leq \int_{1}^{\infty}\mathrm{e}^{-t}\mu(dt)
\\
&=\sum_{n=1}^{\infty}\int_{2^{n-1}}^{2^n}\mathrm{e}^{- t}\mu(dt)\\
&\leq \sum_{n=1}^{\infty}\mathrm{e}^{-2^{n-1}}\mu((0,2^n)) \\
&\leq \mu((0,1))\sum_{n=0}^{\infty}\mathrm{e}^{- 2^{n-1}}b^{-n}_2<\infty.
\end{aligned}
\end{equation}
Similarly, we have
\begin{equation}
\label{22.04.23.19.49}
\begin{aligned}
    \int_{0}^{1}\mathrm{e}^{-t}\mu^{-a_0}(\mathrm{d}t)&\leq \sum_{n=0}^{\infty}\int_{2^{-n-1}}^{2^{-n}}t^{-a_0}\mathrm{d}t\\
    &\leq \sum_{n=0}^{\infty}2^{(n+1)a_0}\mu((0,2^{-n}))\leq \mu((0,1))\sum_{n=0}^{\infty}2^{(n+1)a_0}B_2^n.
\end{aligned}
\end{equation}
Due to the choice of $a_0$ in \eqref{20230128 100}, it  is obvious that $2^{a_0}B_2 < 1$ and the last term in \eqref{22.04.23.19.49} converges. 
Therefore combining \eqref{2023012401} and \eqref{22.04.23.19.49}, we obtain
\begin{align}
								\label{20230124 100}
\cL_{\mu^{-a_0}}(1) \simeq \mu((0,1)).
\end{align}
Next, we use the scaling property. 
For $\lambda \in (0,\infty)$, define the measures as
$$
\mu_{1/\lambda} (\mathrm{d}t) := \mu \left(\frac{1}{\lambda} \,\mathrm{d}t\right)
$$
and
$$
\mu^{-a_0}_{1/\lambda} (\mathrm{d}t) := \mu^{-a_0} \left(\frac{1}{\lambda}\, \mathrm{d}t\right).
$$
Then for any $k \in (1,\infty)$,
\begin{equation*}
    0<b_k\leq \frac{\mu_{1/\lambda}((0, \theta))}{\mu_{1/\lambda}((0,k \theta))}\leq B_k<1,\quad \forall \theta\in(0,\infty).
\end{equation*}
Thus applying \eqref{20230124 100} with $\mu_{1/\lambda}$ instead of $\mu(\mathrm{d}t)$, we prove the claim that
$$
\cL_{\mu^{-a_0}}(\lambda)=\lambda^{a_0} \cL_{\mu^{-a_0}_{1/\lambda}}(1) \simeq \lambda^{a_0} \mu_{1/\lambda}((0,1))= \lambda^{a_0} \mu((0,1/\lambda)).
$$
Therefore, for all $a \in [0,\infty)$ and $j \in \bZ$,
\begin{align*}
\cL_{\mu^{-a_0}}(2^{j \gamma})  \simeq 2^{j\gamma a_0}\mu((0,2^{-j\gamma})) =  2^{j  \gamma a} \cdot 2^{-\left( j \gamma (a-a_0)-\log_2(\mu((0,2^{-j\gamma}))) \right)}
\end{align*}
In other words,
the Laplace transform of $\mu^{-a_0}$ with $a_0 \in \left[0, -\log_2B_2 \right)$ is controlled by a sequence 
\begin{align}
							\label{20230124 05}
\boldsymbol{\mu}(j)=\left( j \gamma (a-a_0)-\log_2(\mu((0,2^{-j\gamma}))) \right)
\end{align}
 in a $\gamma$-dyadic way with a parameter $a$.

$(iii)$
Since $w'\in A_{\infty}(\bR)$ , there exists $\nu\in(1,\infty)$ such that $w'\in A_{\nu}(\bR)$. By \cite[Proposition 7.1.5 and Lemma 7.2.1]{grafakos2014classical},
$$
b_k:=\frac{1}{k^{\nu}[w']_{A_{\nu}(\bR)}}\leq \frac{\mu((0,\theta))}{\mu((0,k\theta))}\leq \left(1-\frac{(1-k^{-1})^{\nu}}{[w']_{A_{\nu}(\bR)}}\right)=:B_k,\quad \forall k\in(1,\infty),\, \theta \in (0,\infty).
$$

$(iv)$ It suffices to prove that for any $T'>0$, there exists a constant $N_{T'}>0$ such that
\begin{equation}
\label{22.10.10.16.50}
    \eta(t)\geq N_{T'}t^{\beta},\quad \forall t\in[0,T'].
\end{equation}
Indeed, if we have \eqref{22.10.10.16.50}, then for $a_0\in[0,1/\beta)$,
\begin{align*}
    \cL_{\mu^{-a_0}}(2^{j\gamma})&=\int_0^{T'}\mathrm{e}^{-2^{j\gamma}t}\frac{t^{-a_0}}{\delta(\eta^{-1}(t))}\mathrm{d}t\\
    &=\int_0^{\eta^{-1}(T')}\mathrm{e}^{-2^{j\gamma}\eta(s)}(\eta(s))^{-a_0}\mathrm{d}s\\
    &\leq N(N_0,\beta,T')\int_0^{\infty}\frac{\mathrm{e}^{-2^{j\gamma}s^{\beta}}}{s^{a
_0\beta}}\mathrm{d}s=N2^{j\gamma a}2^{-\boldsymbol{\mu}(j)}.
\end{align*}
Since $T'$ is finite, it suffices to prove only that \eqref{22.10.10.16.50} holds near zero. Note that \eqref{24.02.06.14.00} can be rewritten as
$$
\left|\left\{t\in[0,t_0]:h\leq \eta(t)<4h\right\}\right|=|[0,t_0]\cap[\eta^{-1}(h),\eta^{-1}(4h))|\leq N_0 h^{1/\beta},\quad \forall h>0.
$$
Observe that for $\eta^{-1}(h)\leq t_0$
\[
    |[0,t_0]\cap[\eta^{-1}(h/4),\eta^{-1}(h))|=|[\eta^{-1}(h/4),\eta^{-1}(h))|.
\]
Then we have
\begin{align*}
    \eta^{-1}(h) 
    &= \eta^{-1}(h) - \eta^{-1}(h/4) + \eta^{-1}(h/4) - \eta^{-1}(h/4^2) +\cdots\\
    &= \sum_{k=0}^\infty \eta^{-1}(4^{-k}h) - \eta^{-1}(4^{-1-k}h)\\
    &\leq \sum_{k=0}^\infty N_0 (4^{-1-k} h)^{1/\beta}\\
    &= N(N_0, \beta) h^{1/\beta}.
\end{align*}
In a word, $\eta^{-1}(h)\leq N(N_0,\beta)h^{1/\beta}$ for all $\eta^{-1}(h)\leq t_0$. Since $\eta(\eta^{-1}(t))=t$ for all $t>0$, we put $\eta(\eta^{-1}(h))$ instead of $h$ and take $t=\eta^{-1}(h)$ to obtain
\[
    N(N_0,\beta)^{-\beta}t^{\beta} \leq \eta(t),\quad \forall t = \eta^{-1}(h)\leq t_0.
\]
The lemma is proved.
\end{proof}

\begin{rem}
    The condition \eqref{24.02.06.14.00} was introduced by I. Kim, K.-H. Kim \cite{kim2018}. 
    In the same paper, the authors showed that if \eqref{22.10.10.16.50} holds for some $\beta>0$ then the condition \eqref{24.02.06.14.00} is verified for the same $\beta$, \cite[Example 2.13]{kim2018}.
    Thus, following the proof of $(iv)$ of Lemma~\ref{24.02.16.16.11}, the condition \eqref{24.02.06.14.00} and the inequality \eqref{22.10.10.16.50} are equivalent.
\end{rem}

\begin{rem}
\label{24.02.19.11.39}
In most results on zero initial value inhomogeneous problems (\textit{e.g.} \cite{Choi_Kim2022,Dong_Kim2018,Dong_Kim2021,Dong_Liu2022}), the solution $u$ is in $L_q((0,T),w'\mathrm{d}t;L_p(\bR^d,w\,\mathrm{d}x))$.
Here $p,q\in(1,\infty)$, $w'\in A_q(\bR)$ and $w\in A_p(\bR^d)$.
We emphasize that Theorem \ref{22.12.27.16.53} and Lemma \ref{24.02.16.16.11} also hold for $q\in(0,\infty)$ and $w'\in A_{\infty}(\bR)$.
Moreover, we also emphasize that our measure $\mu(\mathrm{d}t)$ does not need to have a density. 
In other words, the class of our measures $\mu$ is larger than Muckenhoupt's class.
Indeed, for any $t_0 \in (0,\infty)$, consider the Dirac delta measure centered at $t_0$, \textit{i.e.} $\mu(\mathrm{d}t) = \delta_{t_0}(\mathrm{d}t)$.
Then \eqref{20230210 03} holds since
\begin{align*}
\inf_{j \in \bZ}\frac{\cL_{\mu}(2^{\gamma (j+1)})}{\cL_{\mu}(2^{\gamma j})} 
=\exp(2^{-\gamma}t_0).
\end{align*}
Therefore, taking 
\begin{align*}
\boldsymbol{\mu}(j) = j\gamma a -\log_2\left(\exp\left( - 2^{\gamma j} t_0 \right) \right),
\end{align*}
we can apply Theorem \ref{22.12.27.16.53} due to Lemma \ref{24.02.16.16.11}.
\end{rem}

Combining \cite[Theorem 2.14]{Choi_Kim2022}, we can also handle the following inhomogeneous problems with non-zero initial conditions.
\begin{corollary}
\label{24.02.19.11.10}
    Let $T \in (0,\infty)$, $p\in(1,\infty)$, $q\in(1,\infty)$, $w'\in A_{q}(\bR)$, $w\in A_p(\bR^d)$,  and 
    $\boldsymbol{r}:\bZ\to(-\infty,\infty)$ be a sequence having a controlled difference.
    Suppose that $[w]_{A_{p}(\bR^d)}\leq K$ and $[w']_{A_{q}(\bR)}\leq K_0$. 
Additionally, assume that $\psi$ is a symbol satisfying an ellipticity condition with $(\gamma,\kappa)$ and having a $\left( \left\lfloor \frac{d}{R_{p,d}^{w}}\right\rfloor \vee \left\lfloor \frac{d}{R_{q,1}^{w'}}\right\rfloor+2\right)$-times regular upper bound with $(\gamma,M)$. 
Then for any
    $$
    u_0\in B_{p,q}^{\boldsymbol{r}-\boldsymbol{w}'/q}(\bR^d,w\,\mathrm{d}x)~\text{and}~ f\in L_q((0,T),w'\,\mathrm{d}t;H_p^{\boldsymbol{r}-\boldsymbol{\gamma}}(\bR^d,w\,\mathrm{d}x)),
    $$
    there exists a unique solution 
    $$
u\in L_q\left((0,T),w'\,\mathrm{d}t;H_p^{\boldsymbol{r}}(\bR^d,w\,\mathrm{d}x)\right)
$$ to the equation
    \begin{equation}
						\label{inhomo problem}
    \begin{cases}
        \p_tu(t,x)=\psi(t,-i\nabla)u(t,x)+f(t,x),\quad &(t,x)\in(0,T)\times\bR^d,\\
        u(0,x)=u_0(x),\quad &x\in\bR^d,
    \end{cases}
\end{equation}
where  $\boldsymbol{\gamma}(j) = \gamma j$ and   
\begin{align*}
\boldsymbol{w'}(j)=-\log_2\left(\int_0^{2^{-j\gamma}} w'(t)\mathrm{d}t \right).
\end{align*}
    Moreover, $u$ satisfies
    \begin{equation}
    \label{20230124 08}
      \begin{aligned}
&\int_{0}^T\left\|u(t,\cdot)\right\|_{H_p^{\boldsymbol{r}}(\bR^d,w\,\mathrm{d}x)}^qw'(t)\mathrm{d}t
       \\
       \leq& N(1+T)^q\left(\|u_0\|^q_{B_{p,q}^{\boldsymbol{r}-\boldsymbol{w}'/q}(\bR^d,w\,\mathrm{d}x)}+\int_0^T\|f(t,\cdot)\|^q_{H_p^{\boldsymbol{r}-\boldsymbol{\gamma}}(\bR^d,w\,\mathrm{d}x)} w'(t)\mathrm{d}t\right)
    \end{aligned}
    \end{equation}
    where $N=N(\|\boldsymbol{r}\|_d,C_{w'},d,\gamma,\kappa,K,K_0,M,p,q)$.
\end{corollary}
\begin{proof}
Since the uniqueness comes from the homogeneous case (Theorem \ref{22.12.27.16.53}), we only prove the existence of a solution $u$ satisfying \eqref{20230124 08}. Moreover, due to Proposition \ref{22.05.03.11.34}, we may assume that $\boldsymbol{r}=\boldsymbol{\gamma}$.
We set $\mu(\mathrm{d}t):= w'(t) \mathrm{d}t$.
Let $a_0 \in \left(0, -\log_2B_2 \right)$
and recall
$$
\mu^{-a_0}(\mathrm{d}t):= t^{-a_0}w'\left( t\right)\mathrm{d}t
$$
where
\begin{align*}
B_2=\left(1-\frac{(1-2^{-1})^{q}}{[w']_{A_{q}(\bR)}}\right).
\end{align*}
Then due to Lemma \ref{24.02.16.16.11} $(ii)$ and $(iii)$, the Laplace transform of $\mu^{-a_0}$ is controlled by a sequence 
\begin{align*}
\boldsymbol{w'}(j):=\boldsymbol{\mu^{-a_0}}(j)
=-\log_2(\mu((0,2^{-j\gamma})))
\end{align*}
 in a $\gamma$-dyadic way with parameter $a_0$.
Thus by Theorem \ref{22.12.27.16.53}, there exists a solution $u^1$ to the equation 
    \begin{equation*}
    \begin{cases}
        \p_tu^1(t,x)=\psi(t,-i\nabla)u^1(t,x),\quad &(t,x)\in(0,T)\times\bR^d,\\
        u^1(0,x)=u_0(x),\quad &x\in\bR^d
    \end{cases}
\end{equation*}
such that
\begin{equation}
\label{20230124 10}
    \begin{aligned}
					\int_{0}^T\left\|u^1(t,\cdot)\right\|_{H_p^{\boldsymbol{r}}(\bR^d,w\,\mathrm{d}x)}^q  
       w'(t)\mathrm{d}t
&\lesssim \int_{0}^T\left\|u^1(t,\cdot)\right\|_{H_p^{\boldsymbol{r}}(\bR^d,w\,\mathrm{d}x)}^q  
        t^{a_0}\mu^{-a_0}\left(\frac{(1\wedge q) \kappa}{16^\gamma}\mathrm{d}t\right)  \\
&\leq N(1+\mu^{-a_0}_{a_0,T,\kappa,\gamma,q})\|u_0\|^q_{B_{p,q}^{\boldsymbol{r}-\frac{\boldsymbol{w'}}{q}}(\bR^d,w\,\mathrm{d}x)},
    \end{aligned}
\end{equation}
where
\begin{equation}
\label{20230124 10-2}
    \begin{aligned}
\mu^{-a_0}_{a_0,T,\kappa,\gamma,q}
&=\int_0^T  t^{a_0}\mu^{-a_0}\left(\frac{(1\wedge q) \kappa}{16^\gamma}\mathrm{d}t\right)
\\
&= \left(\frac{(1\wedge q) \kappa}{16^\gamma}\right)^{-a_0}\int_0^{\frac{(1\wedge q) \kappa}{16^\gamma}T}  w'\left(t\right)\mathrm{d}t \\
&= \left(\frac{(1\wedge q) \kappa}{16^\gamma}\right)^{-a_0} \mu\left(\left(0,\frac{(1\wedge q) \kappa}{16^\gamma}T\right)\right).
\end{aligned}
\end{equation}
Moreover, due to \cite[Proposition 7.1.5 and Lemma 7.2.1]{grafakos2014classical},
    \begin{align}
							\label{20230124 11}
   \mu\left(\left(0,\frac{(1\wedge q) \kappa}{16^\gamma}T\right)\right) \leq \mu(\lambda(0,1))\leq \lambda^{q}[w']_{A_q(\bR)}\mu((0,1)),\quad \lambda:=\frac{(1\wedge q) \kappa}{16^\gamma}(1+T).
    \end{align}
On the other hand, by \cite[Theorem 2.14.]{Choi_Kim2022}, there exists a solution $u^2$ to 
    \begin{equation*}
    \begin{cases}
        \p_tu^2(t,x)=\psi(t,-i\nabla)u^2(t,x)+f(t,x),\quad &(t,x)\in(0,T)\times\bR^d,\\
        u^1(0,x)=0,\quad &x\in\bR^d
    \end{cases}
\end{equation*}
such that 
  \begin{align}
									\label{20230124 12}
\int_{0}^T\left\|u^2(t,\cdot)\right\|_{H_p^{\boldsymbol{\gamma}}(\bR^d,w\,\mathrm{d}x)}^q  w'(t)\mathrm{d}t
\leq N(1+T)^q\int_0^T\|f(t,\cdot)\|^q_{H_p^{\boldsymbol{r}-\boldsymbol{\gamma}}(\bR^d,w\,\mathrm{d}x)} w'(t)\mathrm{d}t.
    \end{align}
Due to the linearity,  $u:=u^1+u^2$ becomes a solution to \eqref{inhomo problem}.
Combining all \eqref{20230124 10}, \eqref{20230124 10-2}, \eqref{20230124 11}, and \eqref{20230124 12}, we finally have \eqref{20230124 08}.
    The corollary is proved.
\end{proof}

\begin{rem}
\label{24.02.21.14.50}
     $(i)$ The initial data space $B_{p,q}^{\boldsymbol{r}-\boldsymbol{w'}/q}(\bR^d,w\,\mathrm{d}x)$ presented in Corollary \ref{24.02.19.11.10} serves as the trace space.
     Specifically, the norm of $u_0$ in this space is bounded as follows:
    \begin{equation}
    \label{24.03.04.10.55}
        \|u_0\|_{B_{p,q}^{\boldsymbol{r}-\boldsymbol{w'}/q}(\bR^d,w\,\mathrm{d}x)}\lesssim \|u\|_{L_q((0,T),w'\,\mathrm{d}t;H_p^{\boldsymbol{r}}(\bR^d,w\,\mathrm{d}x))}+\|f\|_{L_q((0,T),w'\,\mathrm{d}t;H_p^{\boldsymbol{r}-\boldsymbol{\gamma}}(\bR^d,w\,\mathrm{d}x))}.
    \end{equation}
    This relationship is elucidated in \cite{CLSW2023trace}, which addresses the trace problem in the context of weighted Lebesgue spaces.
    If \eqref{20230124 08} holds with $f\equiv0$ and $B_{p,q}^{\boldsymbol{r}'}(\bR^d,w\,\mathrm{d}x)$;
    \begin{equation}
    \label{24.03.04.11.01}
        \left(\int_0^{T}\|u(t,\cdot)\|_{H_p^{\boldsymbol{r}}(\bR^d,w\,\mathrm{d}x)}^qw'(t)\mathrm{d}t\right)^{1/q}\leq N\|u_0\|_{B_{p,q}^{\boldsymbol{r}'}(\bR^d,w\,\mathrm{d}x)},
    \end{equation}
    then \eqref{24.03.04.10.55} implies that
    $$
    B_{p,q}^{\boldsymbol{r}'}(\bR^d,w\,\mathrm{d}x)\subseteq B_{p,q}^{\boldsymbol{r}-\boldsymbol{w'}/q}(\bR^d,w\,\mathrm{d}x).
    $$
    This yields that the maximal exponent $\boldsymbol{r'}$ in \eqref{24.03.04.11.01} is $\boldsymbol{r}-\boldsymbol{w'}/q$.
    Therefore, the estimate \eqref{20230124 08} is optimal.
    
    $(ii)$ Corollary \ref{24.02.19.11.10} is also obtained by combining the results in \cite{Choi_Kim2022} and \cite{CLSW2023trace}.
    It is important to note, however, that the scope of weights considered in \cite{CLSW2023trace} is confined to the Muckenhoupt $A_q(\bR)$-class, where $q$ represents the integrability constant with respect to the time variable. 
    Contrary to the constraints outlined in \cite{CLSW2023trace}, Lemma \ref{24.02.16.16.11} and Remark \ref{24.02.19.11.39} in the present study demonstrate that the class of weights need not be limited to the $A_q(\bR)$ class.
\end{rem}

Now we prove Theorem \ref{second-order case}.

\begin{proof}[Proof of Theorem \ref{second-order case}]
For $f \in \cS(\bR^d)$, it  follows that
\begin{align*}
\cF[a^{ij}(t)D_{x^ix^j}f](\xi)
=-a^{ij}(t) \xi^i \xi^j \cF[f] (\xi).
\end{align*}
Then due to \eqref{ellip coefficient}, the symbol $\psi(t,\xi):=-a^{ij}(t) \xi^i \xi^j$ satisfies an ellipticity condition with $(2,\kappa)$ and has a $n$-times regular upper bound with $(2,M)$ for any $n \in \bN$ by using the trivial extension $a^{ij}(t)=a^{ij}(T)$ for all $ t \geq T$. 
By $(ii)$, $(iii)$ of Lemma \ref{24.02.16.16.11} and Theorem \ref{22.12.27.16.53}, there exists a unique solution $u\in L_q((0,T),w\,\mathrm{d}t;H_p^2(\bR^d,w'\,\mathrm{d}x))$ such that
$$
\int_{0}^T \|u(t,\cdot)\|_{ H^{2}_p(\bR^d,w'\,\mathrm{d}x)}^{ q} w(t) \mathrm{d}t
    \leq N\|u_0\|_{B_{p,q}^{W_2}(\bR^d,w'\,\mathrm{d}x)}^q.
$$
Here $w\in A_{\nu}(\bR)$.
If $\nu=q$, then by \cite[Corollary 5.1]{CLSW2023trace}, we also have
$$
    \|u_0\|_{B_{p,q}^{W_2}(\bR^d,w'\,\mathrm{d}x)}^q\leq N\int_{0}^T \|u(t,\cdot)\|_{ H^{2}_p(\bR^d,w'\,\mathrm{d}x)}^{ q} w(t) \mathrm{d}t.
$$
The Theorem is proved.
\end{proof}
We end this section with a demonstration of how our findings contribute to the regularity of solutions for second-order parabolic partial differential equations (PDEs) with degenerate coefficients.
Consider $\{a^{ij}(t)\}_{d\times d}$, a nonnegative definite matrix of dimensions $d\times d$, fulfilling the condition
\begin{equation}
\label{22.09.20.16.21}
    \sum_{i,j=1}^da^{ij}(t)\xi^j\xi^j\geq \delta(t)|\xi|^2,\quad \forall \xi\in\bR^d,\,t\in\bR.
\end{equation}
We introduce two distinct assumptions for \eqref{22.09.20.16.21}:
\begin{itemize}
\item \textbf{Assumption 1:} Let $a^{ij}(t) = t^{\alpha} \delta^{ij}$ and $\delta(t) = t^{\alpha}$, with $\alpha > -1$ and $\delta^{ij}$ representing the Kronecker delta function.
\item \textbf{Assumption 2:} The matrix $\{a^{ij}(t)\}_{d\times d}$ satisfies condition \eqref{22.09.20.16.21} and additionally:
\begin{enumerate}
    \item For all $t > 0$, it holds that $\int_0^t \delta(s) \, ds > 0$.
    \item There exist constants $t_0 \in (0, T)$, $\beta > 0$, and $N_0 > 0$ such that for all $h > 0$,
    $$
    \left| \left\{ t \in [0, t_0] : h \leq \int_0^t \delta(s) \, ds < 4h \right\} \right| \leq N_0 h^{1/\beta}.
    $$
    \item There exists a constant $\bar{N}_0$ ensuring that
    $$
    |a^{ij}(t)| \leq \bar{N}_0 \delta(t).
    $$
\end{enumerate}
\end{itemize}
In the results of I. Kim, K.-H. Kim \cite{kim2018} and  K.-H. Kim, K. Lee  \cite{kim2017}, the authors established regularity results under these assumptions, which can be articulated as follows:
\begin{equation}\label{22.10.10.16.44}
\begin{cases}
\partial_t u(t,x) = \sum_{i,j=1}^{d} a^{ij}(t) D_{x^i x^j} u(t,x), & (t,x) \in (0,T) \times \mathbb{R}^d, \\
u(0,x) = u_0(x), & x \in \mathbb{R}^d.
\end{cases}
\end{equation}
For the initial condition $u_0 \in B_{p,p}^{\sigma}(\mathbb{R}^d)$, the implications under each assumption are delineated as follows:
\begin{align*}
(a) & \quad \text{Under Assumption 1 with } \alpha > -1 + \frac{1}{p},\,\sigma=2-\frac{2}{p(1+\alpha)}, \\
&\quad\quad\text{ it follows that } u_{xx} \in L_p((0,T); L_p(\mathbb{R}^d)). \\
(b) & \quad \text{Under Assumption 1 with } \alpha > -\frac{1}{p},\,\sigma=\frac{2}{1+\alpha}-\frac{2}{p(1+\alpha)}, \\
&\quad\quad\text{ it implies that } t^{\alpha} u_{xx} \in L_p((0,T); L_p(\mathbb{R}^d)). \\
(c) & \quad \text{Under Assumption 2 with }\sigma=2-\frac{2}{p\beta}, \text{ it is deduced that } u_{xx} \in L_p((0,T); L_p(\mathbb{R}^d)).
\end{align*}
The results denoted as $(a)$, $(b)$, and $(c)$ depend upon estimations of the heat kernel.
However, leveraging the corollaries of our findings, we introduce a unified approach that facilitates the derivation of more generalized results for $(a)$, $(b)$, and $(c)$.

\begin{corollary}
    Let $p\in(1,\infty)$, $q\in[1,\infty)$, $w\in A_p(\bR^d)$ and $u_0 \in B_{p,q}^{\sigma}(\mathbb{R}^d,w\,\mathrm{d}x)$.
    Then
        \begin{align*}
(a') & \quad \text{Under Assumption 1 with } \alpha > -1 + \frac{1}{q},\,\sigma=2-\frac{2}{q(1+\alpha)}, \\
&\quad\quad\text{ it follows that } u_{xx} \in L_q((0,T); L_p(\mathbb{R}^d, w\,\mathrm{d}x)). \\
(b') & \quad \text{Under Assumption 1 with } \alpha > -\frac{1}{q},\,\sigma=\frac{2}{1+\alpha}-\frac{2}{q(1+\alpha)}, \\
&\quad\quad\text{ it implies that } t^{\alpha} u_{xx} \in L_q((0,T); L_p(\mathbb{R}^d,w\,\mathrm{d}x)). \\
(c') & \quad \text{Under Assumption 2 with }\sigma=2-\frac{2}{q\beta}, \text{ it is deduced that } u_{xx} \in L_q((0,T); L_p(\mathbb{R}^d,w\,\mathrm{d}x)).
\end{align*}
\end{corollary}
\begin{proof}
It is possible to obtain $(a')$ from $(c')$ using \cite[Example 2.12]{kim2018}. Therefore, we only prove $(b')$ and $(c')$.

$(b')$ Since $\alpha>-1/q$,
$$
\beta:=\frac{\alpha(q-1)}{1+\alpha}>-1.
$$
Put $w_{\beta}(t):=t^{\beta}\in A_{\infty}(\bR)$, then
\begin{equation*}
    \begin{gathered}
     2^{-\boldsymbol{w_{\beta}}(j)}:=\cL_{w_{\beta}}(2^{2j})\simeq w_{\beta}([0,2^{-2j}])\simeq 2^{-2j(1+\beta)}=2^{-\frac{2q\alpha j}{1+\alpha}-\frac{2j}{1+\alpha}}\\
     \implies B_{p,q}^{\boldsymbol{2}-\frac{\boldsymbol{w_{\beta}}}{q}}(\bR^d,w\,\mathrm{d}x)=B_{p,q}^{\frac{2}{1+\alpha}-\frac{2}{q(1+\alpha)}}(\bR^d,w\,\mathrm{d}x).
    \end{gathered}
\end{equation*}
By Theorem \ref{second-order case}, the Cauchy problem
\begin{equation}
\label{22.09.21.10.00}
        \begin{cases}
            \p_tv(t,x)=\Delta v(t,x),\quad &(t,x)\in(0,T_{\alpha})\times\bR^d,\\
            u(0,x)=u_0(x),\quad & x\in\bR^d,
        \end{cases}\quad \left(T_{\alpha}:=\frac{T^{1+\alpha}}{1+\alpha}\right)
\end{equation}
has a unique solution $v\in L_q((0,T_{\alpha}),w_{\beta}\,\mathrm{d}t;H_p^2(\bR^d,w\,\mathrm{d}x))$ satisfying
\begin{equation}\label{22.09.21.13.33}
    \begin{aligned}
    \|v\|_{L_q((0,T_{\alpha}),w_{\beta}\,dt;H_p^2(\bR^d,w\,\mathrm{d}x))}&\leq N(1+T)\|u_0\|_{B_{p,q}^{\boldsymbol{2}-\frac{\boldsymbol{w_{\beta}}}{q}}(\bR^d,w\,\mathrm{d}x)}\\
    &=N(1+T)\|u_0\|_{B_{p,q}^{\frac{2}{1+\alpha}-\frac{2}{q(1+\alpha)}}(\bR^d,w\,\mathrm{d}x)}.
    \end{aligned}
\end{equation}
Let
\begin{equation*}
    u(t,x):=v\left(\frac{t^{1+\alpha}}{1+\alpha},x\right).
\end{equation*}
Then $u$ is a solution and due to \eqref{22.09.21.13.33},
\begin{align*}
\|t^{\alpha}u_{xx}\|_{L_q((0,T);L_p(\bR^d,w\,dx))}&=N\|v_{xx}\|_{L_q((0,T_{\alpha}),w_{\beta}\,dt;L_p(\bR^d,w\,dx))}\\
&\leq N(1+T)\|u_0\|_{B_{p,q}^{\frac{2}{1+\alpha}-\frac{2}{q(1+\alpha)}}(\bR^d,w\,dx)}.
\end{align*}

$(c')$ Let
$$
\tilde{a}^{ij}(t):=\frac{a^{ij}(\eta^{-1}(t))}{\delta(\eta^{-1}(t))}\quad \left(\frac{0}{0}:=1\right).
$$
By Lemma \ref{24.02.16.16.11}-($iv$) and Theorem \ref{22.12.27.16.53}, for $u_0\in B_{p,q}^{2-\frac{2}{q\beta}}(\bR^d,w\,\mathrm{d}x)$, the Cauchy problem
\begin{equation*}
    \begin{cases}
        \p_tv(t,x)=\tilde{a}^{ij}(t)v_{x^ix^j}(t,x),\quad &(t,x)\in(0,\eta^{-1}(T))\times\bR^d,\\
        v(0,x)=u_0(x),\quad & x\in\bR^d,
    \end{cases}
\end{equation*}
has a unique solution $v\in L_q((0,\eta^{-1}(T)),\,1/\delta(\eta^{-1})\mathrm{d}t;H_p^2(\bR^d,w\,\mathrm{d}x))$.
Consider
$$
u(t,x):=v(\eta(t),x).
$$
Then, $u$ is a solution to the Cauchy problem
\begin{equation*}
    \begin{cases}
        \p_tu(t,x)=\bar{a}^{ij}(t)u_{x^ix^j}(t,x),\quad &(t,x)\in(0,T)\times\bR^d,\\
        u(0,x)=u_0(x),\quad & x\in\bR^d,
    \end{cases}
\end{equation*}
where
$$
\bar{a}^{ij}(t):=\delta(t)\frac{a^{ij}(\eta^{-1}(\eta(t)))}{\delta(\eta^{-1}(\eta(t)))}.
$$
Let
$$
(0,T)=\{t:\eta^{-1}(\eta(t))=t\}\cup\{t:\eta^{-1}(\eta(t))<t\}=:A\cup B.
$$
For $t\in A$, $\bar{a}^{ij}(t)=a^{ij}(t)$. It can be easily checked that
$$
B=\bigcup_{n=1}^{\infty}B_n,
$$
where $B_n$'s are pairwise disjoint connected sets. Indeed, $\eta$ equals constant on each $B_n$. Thus, $\delta=0$ almost everywhere on $B_n$. This certainly implies that $\bar{a}^{ij}=a^{ij}=0$ almost everywhere on $B$. Therefore, $u$ is a solution to the Cauchy problem
\begin{equation*}
    \begin{cases}
        \p_tu(t,x)=a^{ij}(t)u_{x^ix^j}(t,x),\quad &(t,x)\in(0,T)\times\bR^d,\\
        u(0,x)=u_0(x),\quad & x\in\bR^d,
    \end{cases}
\end{equation*}
and
\begin{align*}
\|u_{xx}\|_{L_q((0,T);L_p(\bR^d,w\,\mathrm{d}x))}&=\|v_{xx}\|_{L_q((0,\eta^{-1}(T)),\,1/\delta(\eta^{-1})\mathrm{d}t;L_p(\bR^d,w\,\mathrm{d}x))}\\
&\leq N\|u_0\|_{B_{p,q}^{2-\frac{2}{q\beta}}(\bR^d,w\,\mathrm{d}x)}.
\end{align*}
The corollary is proved.
\end{proof}

\mysection{Proof of Theorem \ref{22.12.27.16.53}}
										\label{pf main thm}

We define kernels related to the symbol $\psi(t,\xi)$ first.
For $(t,s,x) \in \bR \times \bR \times \bR^d$ and $\varepsilon \in [0,1]$, we set
\begin{align*}
    p(t,s,x):=1_{0 \leq s < t} \cdot \frac{1}{(2\pi)^{d/2}}\int_{\bR^d} \exp\left(\int_{s}^t\psi(r,\xi)\mathrm{d}r\right)\mathrm{e}^{ix\cdot\xi}\mathrm{d}\xi,
\end{align*}
\begin{align*}
P_{\varepsilon}(t,s,x)
&:=\Delta^{\frac{\varepsilon\gamma}{2}}p(t,s,x)\\
&:=-(-\Delta)^{\frac{\varepsilon\gamma}{2}}p(t,s,x)\\
&:=1_{0 \leq s <t} \cdot \frac{1}{(2\pi)^{d/2}}\int_{\bR^d} |\xi|^{\varepsilon \gamma}\exp\left(\int_{s}^t\psi(r,\xi)\mathrm{d}r\right)\mathrm{e}^{ix\cdot\xi}\mathrm{d}\xi,
\end{align*}
and
\begin{align*}
    \psi(t,-i\nabla)p(t,s,x)&:=(\psi(t,-i\nabla)p)(t,s,x)\\
    &:=1_{0 \leq s < t} \cdot \frac{1}{(2\pi)^{d/2}}\int_{\bR^d} \psi(t,\xi)\exp\left(\int_{s}^t\psi(r,\xi)\mathrm{d}r\right)\mathrm{e}^{ix\cdot\xi}\mathrm{d}\xi.
\end{align*}
For these kernels, we introduce integral operators as follows:
\begin{align*}
\cT_{t,s} f(x) 
:= \int_{\bR^d} p(t,s,x-y)f(y)\mathrm{d}y,\quad 
\cT_{t,s}^{\varepsilon} f(x)
:=\Delta^{\frac{\varepsilon\gamma}{2}}\cT_{t,s} f(x) 
:=\int_{\bR^d} P_{\varepsilon}(t,s,x-y)f(y)\mathrm{d}y,
\end{align*}
and
\begin{align*}
 \psi(t,-i\nabla)\cT_{t,s} f(x) := \int_{\bR^d} \psi(t,-i\nabla) p(t,s,x-y)f(y)\mathrm{d}y.
\end{align*}
These operators are closely related to solutions of our initial value problems.
Formally, it is easy to check that 
\begin{align*}
\partial_t\cT_{t,0} f(x) =  \psi(t,-i\nabla)\cT_{t,0} f(x)
\end{align*}
and
\begin{align*}
\lim_{t \to 0} \cT_{t,0} f(x)  = f(x).
\end{align*}
Thus if a symbol $\psi$ and an initial data $u_0$ are nice enough, for instance $\psi$ satisfies the ellipticity condition and $u_0\in C_c^{\infty}(\bR^d)$, then the function 
$u(t,x):=\cT_{t,0}u_0(x)$ becomes a classical strong solution to the Cauchy problem
\begin{equation*}
    \begin{cases}
        \p_tu(t,x)=\psi(t,-i\nabla)u(t,x),\quad &(t,x)\in(0,T)\times\bR^d,\\
        u(0,x)=u_0(x),\quad &x\in\bR^d.
    \end{cases}
\end{equation*}
Therefore, roughly speaking, it is sufficient to show boundedness of $\cT_{t,0}^\varepsilon u_0$ in appropriate spaces for our \textit{a priori} estimates.
More precisely, due to the definitions of the Besov and Sobolev spaces, we have to estimate their Littlewood--Paley projections.
We recall that  $\Psi$ is a function in the Schwartz class $\mathcal{S}(\mathbb{R}^d)$ whose Fourier transform $\mathcal{F}[\Psi]$ is nonnegative, supported in an annulus $\{\frac{1}{2}\leq |\xi| \leq 2\}$, and $\sum_{j\in\mathbb{Z}} \mathcal{F}[\Psi](2^{-j}\xi) = 1$ for $\xi \not=0$.
    Then we define the Littlewood--Paley projection operators $\Delta_j$ and $S_0$ as $\mathcal{F}[\Delta_j f](\xi) = \mathcal{F}[\Psi](2^{-j}\xi) \mathcal{F}[f](\xi)$, $S_0f = \sum_{j\leq 0} \Delta_j f$, respectively.
We denote
\begin{equation}
\label{def T ep j}
\begin{aligned}
\cT_{t,s}^{\varepsilon,j} f(x)&:=\int_{\bR^d}\Delta_jP_{\varepsilon}(t,s,x-y)f(y)\mathrm{d}y\\
\cT_{t,s}^{\varepsilon,\leq0}f(x)&:=\int_{\bR^d}S_0P_{\varepsilon}(t,s,x-y)f(y)\mathrm{d}y,
\end{aligned}
\end{equation}
where
\begin{align*}
\Delta_jP_{\varepsilon}(t,s,x-y)
:=(\Delta_jP_{\varepsilon})(t,s,x-y)
:=\Delta_j\left[P_{\varepsilon}(t,s,\cdot)\right](x-y).
\end{align*}
Similarly,
\begin{align*}
\psi(t,-i\nabla)\cT_{t,s}^{j} f(x):=\int_{\bR^d}\Delta_j \psi(t,-i\nabla) p(t,s,x-y)f(y)\mathrm{d}y
\end{align*}
and
\begin{align*}
\psi(t,-i\nabla)\cT_{t,s}^{\leq 0}f(x):=\int_{\bR^d}S_0\psi(t,-i\nabla)p(t,s,x-y)f(y)\mathrm{d}y.
\end{align*}

Next, we recall Hardy--Littlewood's maximal function and Fefferman--Stein's sharp (maximal) function. 
For a locally integrable function $f$ on $\bR^d$, we define
\begin{align*}
\mathbb{M} f(x) 
&:=\sup_{x \in B_r(x_0)} \aint_{B_r(x_0)}|f(y)|\mathrm{d}y := \sup_{x \in B_r(x_0)} \frac{1}{|B_r(x_0)|}\int_{B_r(x_0)}|f(y)|\mathrm{d}y
\end{align*}
and
\begin{align*}
f^\sharp(x) 
:=\mathbb{M}^\sharp f(x) 
&:=\sup_{x \in B_r(x_0)} \aint_{B_r(x_0)}\aint_{B_r(x_0)}|f(y_0)-f(y_1)|\mathrm{d}y_0\mathrm{d}y_1 \\
&:= \sup_{x \in B_r(x_0)} \frac{1}{|B_r(x_0)|^2}\int_{B_r(x_0)}\int_{B_r(x_0)}|f(y_0)-f(y_1)|\mathrm{d}y_0\mathrm{d}y_1,
\end{align*}
where the supremum is taken over all balls $B_r(x_0)$ containing $x$ with $r \in (0,\infty)$ and $x_0 \in \bR^d$.
Moreover, for a function $f(t,x)$ defined on $(0,\infty) \times \bR^d$, we use the notation 
$\mathbb{M}_x^{\sharp}\big( f(t,x) \big)$ or $\mathbb{M}_x^{\sharp}\big( f(t,\cdot) \big)(x)$ to denote the sharp function with respect to the variable $x$ after fixing $t$. 
We recall a weighted version of the Hardy--Littlewood Theorem and Fefferman--Stein Theorem, which play an important role in our main estimate. 
\begin{prop}
				\label{lem:FS ineq}
Let $p \in (1,\infty)$ and $w \in A_p(\bR^d)$. Assume that $[w]_{A_p(\bR^d)}\leq K$ for a positive constant $K$. 
Then there exists a positive constant $N=N(d,K,p)$ such that for any $f\in L_p(\bR^d)$,
    \begin{align*}
        \big\| \mathbb{M} f \big\|_{L_p(\bR^d, w\,\mathrm{d}x)}
        \leq
        N\big\|f \big\|_{L_p(\bR^d, w\,\mathrm{d}x)} 
    \end{align*}
    and
    \begin{align*}
        \big\|f \big\|_{L_p(\bR^d, w\,\mathrm{d}x)}
        \leq
        N \big\| \mathbb{M}_x^{\sharp} f \big\|_{L_p(\bR^d, w\,\mathrm{d}x)}.
    \end{align*}
\end{prop}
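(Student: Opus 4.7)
The plan is to treat the two inequalities separately, both via the classical Muckenhoupt/Coifman--Fefferman machinery, since Proposition \ref{lem:FS ineq} is exactly the weighted maximal bound and the weighted sharp function bound adapted to the $A_p$ class.

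For the weighted Hardy--Littlewood estimate, I would first establish a weighted weak-type $(p,p)$ inequality for $\mathbb{M}$. The key ingredient is the pointwise $A_p$-inequality: for any ball $B$ and nonnegative $f$, Hölder's inequality together with \eqref{def ap} gives
\begin{align*}
\left(\aint_{B} f\,dy\right)^{p} w(B) \leq [w]_{A_p(\bR^d)} \int_{B} f(y)^{p} w(y)\,dy.
\end{align*}
Applied to a Vitali-type covering of the level set $\{\mathbb{M}f > \lambda\}$, this yields $w(\{\mathbb{M}f > \lambda\}) \leq N \lambda^{-p}\int |f|^{p}w\,dx$ with $N = N(d,K,p)$. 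To upgrade from weak-type to strong-type, I would exploit the self-improving property of Muckenhoupt weights: by the reverse Hölder inequality there exists $p_{0}=p_{0}(d,K,p) \in (1,p)$ with $w \in A_{p/p_{0}}(\bR^d)$ and $[w]_{A_{p/p_{0}}(\bR^d)} \leq N(d,K,p)$. Applying the weak-type bound at exponents $p_{0}$ and a fixed $p_{1} > p$ chosen so that $w \in A_{p_{1}}$, Marcinkiewicz interpolation delivers the desired strong $L_{p}(w)$ bound with constant depending only on $d$, $K$, $p$.

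For the weighted Fefferman--Stein inequality, the plan is to invoke the good-$\lambda$ mechanism. The unweighted good-$\lambda$ estimate is
\begin{align*}
\bigl|\{\mathbb{M}f > 2\lambda,\ f^{\sharp} \leq \gamma \lambda\}\bigr| \leq N \gamma\, \bigl|\{\mathbb{M}f > \lambda\}\bigr|
\end{align*}
for all small $\gamma$, which is proved by a Calderón--Zygmund decomposition of $\{\mathbb{M}f > \lambda\}$ together with the standard pointwise comparison with $f^{\sharp}$ on each Whitney cube. Since $w \in A_{p}(\bR^d) \subset A_{\infty}(\bR^d)$, the class has the quantitative doubling property: there exist $\delta=\delta(d,K,p) \in (0,1)$ and a constant $C=C(d,K,p)$ such that $w(E) \leq C (|E|/|B|)^{\delta} w(B)$ for every ball $B$ and every measurable $E \subset B$. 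Transferring this to the good-$\lambda$ inequality gives
\begin{align*}
w(\{\mathbb{M}f > 2\lambda,\ f^{\sharp} \leq \gamma \lambda\}) \leq N \gamma^{\delta}\, w(\{\mathbb{M}f > \lambda\}),
\end{align*}
and integrating in $\lambda$ against $p\lambda^{p-1}\,d\lambda$ yields $\|\mathbb{M}f\|_{L_{p}(w)} \leq N \|f^{\sharp}\|_{L_{p}(w)}$. Combined with the pointwise bound $|f| \leq \mathbb{M}f$ a.e.\ from Lebesgue differentiation (which is valid since $w$ is locally integrable), this produces the second assertion.

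The main technical obstacle is the usual well-posedness issue in the good-$\lambda$ step: one cannot directly integrate unless $\|\mathbb{M}f\|_{L_{p}(w)}$ is finite a priori. I would resolve this by a standard truncation, proving the inequality first for $f \in L_{p}(w)\cap L_{\infty}$ with compact support (for which the first inequality already established gives $\mathbb{M}f \in L_{p}(w)$), and then extending to general $f$ by monotone convergence after verifying the finiteness of the right-hand side, exactly as in the classical Fefferman--Stein argument. Everything else reduces to bookkeeping about the dependence of the constant on $K = [w]_{A_{p}(\bR^d)}$.
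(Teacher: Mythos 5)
Your proposal is correct, but it takes a different route from the paper in the trivial sense that the paper offers no proof at all: it simply records that this weighted Hardy--Littlewood/Fefferman--Stein pair is classical and cites \cite[Theorems 2.2 and 2.3]{Dong_Kim2018}. What you reconstruct is essentially the standard Muckenhoupt and Coifman--Fefferman machinery: the pointwise $A_p$ estimate $\left(\aint_B f\,dy\right)^p w(B)\leq [w]_{A_p(\bR^d)}\int_B f^p w\,dy$ plus a Vitali covering for the weak type, quantitative openness of the $A_p$ classes plus Marcinkiewicz interpolation for the strong type (interpolating instead between weak type at an exponent below $p$ and $L_\infty$ would work just as well), and the good-$\lambda$ inequality transferred through the $A_\infty$ property $w(E)\leq C(|E|/|B|)^{\delta}w(B)$ for the sharp-function bound. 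Two small remarks: the sharp function in this paper is the double-average variant $\aint_B\aint_B|f(y_0)-f(y_1)|\,dy_0dy_1$, which is comparable within a factor of $2$ to the usual $\aint_B|f-f_B|$, so your good-$\lambda$ step applies verbatim; and once the first inequality is established, for $f\in L_p(\bR^d,w\,dx)$ the finiteness $\|\mathbb{M}f\|_{L_p(\bR^d,w\,dx)}<\infty$ needed to integrate the good-$\lambda$ estimate is automatic, so your truncation/monotone-convergence step is a harmless but unnecessary precaution (it only matters if one insists on reading the hypothesis literally as unweighted $L_p$). The benefit of your route is a self-contained proof with explicit tracking of the dependence on $K=[w]_{A_p(\bR^d)}$; the paper's citation buys brevity, since these are textbook facts not specific to its setting.
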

This weighted version of the Hardy--Littlewood Theorem and the Fefferman--Stein Theorem is very well-known. For instance, see \cite[Theorems 2.2 and 2.3]{Dong_Kim2018}.

\begin{prop}\label{prop:maximal esti}
    Let $p \in (1,\infty)$, $\varepsilon \in [0,1]$, $w \in A_p(\bR^d)$, and $p_0\in(1,2]$ be a constant so that $p_0 \leq R_{p,d}^w$ and
    $$
    \left\lfloor\frac{d}{p_0}\right\rfloor = \left\lfloor\frac{d}{R_{p,d}^w}\right\rfloor.
    $$
    Suppose that $\psi$ is a symbol satisfying the ellipticity condition with $(\gamma,\kappa)$ and having the $\left( \left\lfloor \frac{d}{R_{p,d}^{w}}\right\rfloor+2\right)$-times regular upper bound with $(\gamma,M)$. 
        Then there exist positive constants $N=N(d,\delta,\varepsilon,\gamma,\kappa,M$, $N'=N'(d,p_0,\varepsilon,\gamma,\kappa,M)$, $N''=N''(d,p_0, \delta,\gamma,\kappa,M)$, and $N'''=N'''(d,p_0,\gamma,\kappa,M)$  such that for all $t\in (0,\infty)$, $f\in \cS(\bR^{d})$, and $j \in \bZ$,
    \begin{equation*}
    \begin{gathered}
     \bM^{\sharp}_x\left(\cT_{t,0}^{\varepsilon,j}f\right)(x)
     \leq N2^{j\varepsilon\gamma}\mathrm{e}^{-\kappa t 2^{j\gamma}\times\frac{p_0(1-\delta)}{2^{\gamma}}}\left(\bM\left(|f|^{p_0}\right)(x)\right)^{1/p_0},\quad \forall\delta\in(0,1),\\ 
        \bM^{\sharp}_x\left(\cT_{t,0}^{\varepsilon,\leq 0}f\right)(x)\leq N'\left(\bM\left(|f|^{p_0}\right)(x)\right)^{1/p_0},\\
        \bM^{\sharp}_x\left(\psi(t,-i\nabla)\cT_{t,0}^{j}f\right)(x)
        \leq N''2^{j\gamma}\mathrm{e}^{-\kappa t 2^{j\gamma}\times\frac{p_0(1-\delta)}{2^{\gamma}}}\left(\bM\left(|f|^{p_0}\right)(x)\right)^{1/p_0},\quad \forall\delta\in(0,1),\\ 
        \bM^{\sharp}_x\left(\psi(t,-i\nabla)\cT_{t,0}^{\leq0}f\right)(x)\leq N'''\left(\bM\left(|f|^{p_0}\right)(x)\right)^{1/p_0}.
    \end{gathered}
    \end{equation*}
 \end{prop}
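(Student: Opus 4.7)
The plan is to prove pointwise bounds on the Littlewood--Paley kernels $\Delta_j P_\varepsilon(t,0,\cdot)$ and their variants by Fourier-analytic integration by parts, then pass to pointwise bounds on the convolutions via a Jensen--H\"older argument, and finally invoke the standard inequality $\bM^\sharp \leq 2\bM$ together with the $A_\infty$-self-improvement of $\bM$ to reach the sharp-function bound.

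After the dilation $\xi = 2^j\eta$, the kernel $\Delta_j P_\varepsilon(t,0,x)$ equals $2^{jd}$ times the inverse Fourier transform of
\[
\tilde m_j(\eta) := 2^{j\varepsilon\gamma}|\eta|^{\varepsilon\gamma}\,\cF[\Psi](\eta)\,\exp\Big(\int_0^t\psi(r,2^j\eta)\,dr\Big).
\]
The ellipticity yields $|\exp(\int_0^t\psi(r,2^j\eta)\,dr)| \leq e^{-\kappa t 2^{j\gamma}|\eta|^\gamma}$, so on the support $|\eta|\in[1/2,2]$ of $\cF[\Psi]$ the base exponential decay is at rate $\kappa t 2^{j\gamma}/2^\gamma$. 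The $n_0$-times regular upper bound with $n_0 := \lfloor d/R_{p,d}^w\rfloor + 2 = \lfloor d/p_0\rfloor + 2$, combined with Fa\`a di Bruno's formula, shows that each $\eta$-derivative of $\exp(\int\psi(r,2^j\eta)\,dr)$ of order $|\alpha|\leq n_0$ equals the exponential times a polynomial of degree at most $|\alpha|$ in factors bounded by $Mt 2^{j\gamma}$. Splitting $e^{-\kappa t 2^{j\gamma}|\eta|^\gamma} = e^{-\delta\kappa t 2^{j\gamma}|\eta|^\gamma}\cdot e^{-(1-\delta)\kappa t 2^{j\gamma}|\eta|^\gamma}$ and absorbing the polynomial factors into the first exponential produces the uniform bound $|D^\alpha_\eta \tilde m_j(\eta)| \lesssim 2^{j\varepsilon\gamma}\,e^{-(1-\delta)\kappa t 2^{j\gamma}/2^\gamma}$ for $|\alpha|\leq n_0$. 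Integrating by parts $n_0$ times then gives the pointwise kernel bound
\[
|\Delta_j P_\varepsilon(t,0,x)| \lesssim 2^{jd+j\varepsilon\gamma}\,e^{-(1-\delta)\kappa t 2^{j\gamma}/2^\gamma}\,(1+2^j|x|)^{-n_0},
\]
whose profile $(1+|\cdot|)^{-n_0}$ lies in $L^1\cap L^{p_0'}(\bR^d)$ because $n_0 > d/p_0$.

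Writing $K_j := \Delta_j P_\varepsilon(t,0,\cdot)$, Jensen's inequality in the form $(|K_j|\ast|f|)^{p_0}(x) \leq \|K_j\|_{L^1}^{p_0-1}\,(|K_j|\ast|f|^{p_0})(x)$, together with $\|K_j\|_{L^1} \lesssim 2^{j\varepsilon\gamma}e^{-(1-\delta)\kappa t 2^{j\gamma}/2^\gamma}$ and the standard convolution-with-decaying-profile estimate $(|K_j|\ast g)(x) \lesssim 2^{j\varepsilon\gamma}e^{-(1-\delta)\kappa t 2^{j\gamma}/2^\gamma}\,\bM(g)(x)$ for nonnegative $g$, compounds the decay by a factor of $p_0$:
\[
|\cT_{t,0}^{\varepsilon,j}f(x)|^{p_0} \lesssim 2^{j\varepsilon\gamma p_0}\,e^{-p_0(1-\delta)\kappa t 2^{j\gamma}/2^\gamma}\,\bM(|f|^{p_0})(x).
\]
Combining this with $\bM_x^\sharp(g)^{p_0} \leq 2^{p_0}\bM(|g|^{p_0})$ (valid via $|a-b|^{p_0}\leq 2^{p_0-1}(|a|^{p_0}+|b|^{p_0})$ and Jensen inside the average), using the $A_\infty$ self-improvement $\bM\circ\bM\lesssim\bM$ to absorb the extra maximal operator, and then taking $p_0$-th roots yields the claimed sharp-function bound. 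The three companion estimates follow from the same pipeline: for $S_0 P_\varepsilon$ the Fourier support is contained in $\{|\xi|\leq 2\}$, the exponential is merely $\leq 1$, and no decay factor appears; for the two bounds with $\psi(t,-i\nabla)$ replacing $(-\Delta)^{\varepsilon\gamma/2}$, the multiplier $|\xi|^{\varepsilon\gamma}$ is replaced by $\psi(t,\xi)$, whose regular upper bound gives size at most $M2^{j\gamma}$ on the annulus, yielding the prefactor $2^{j\gamma}$ instead of $2^{j\varepsilon\gamma}$.

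The main obstacle is the Fa\`a di Bruno bookkeeping in the first step: every $\eta$-derivative of $\exp(\int\psi(r,2^j\eta)\,dr)$ brings down polynomial factors in the potentially large quantity $Mt2^{j\gamma}$, and the argument only closes because a $\delta$-fraction of the ellipticity-generated exponential can absorb any such polynomial growth uniformly in $t$ and $j$. This bookkeeping also pins down the regularity requirement $n_0 = \lfloor d/R_{p,d}^w\rfloor + 2$: one needs $\lfloor d/p_0\rfloor + 1$ derivatives so that the rescaled kernel profile belongs to $L^1(\bR^d)$, and one additional derivative to secure the extra integrability used in the Jensen--H\"older step producing $(\bM(|f|^{p_0}))^{1/p_0}$ on the right-hand side.
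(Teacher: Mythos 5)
Your overall architecture (pointwise kernel bound $\to$ convolution bound by $\bM$ $\to$ $\bM^\sharp\leq 2\bM$) is different from the paper's, which never takes this shortcut: the paper estimates the mean oscillation of $\cT^{\varepsilon,j}_{t,0}f$ over a ball directly, splitting $f$ into a near part (handled by $L_{p_0}$-boundedness of the operator) and a far part (handled by the kernel difference estimate of \cite[Lemma 6.6]{Choi_Kim2022} paired, via H\"older, with weighted $L_{p_0'}$-norms of the kernel on spheres). Unfortunately your shortcut has a genuine gap at its central step. With only $n_0=\lfloor d/p_0\rfloor+2$ derivatives of the symbol available, integration by parts gives at best the pointwise decay $(1+2^j|x|)^{-n_0}$, and your claim that this profile lies in $L^1(\bR^d)$ ``because $n_0>d/p_0$'' is false: $L^1$ membership requires $n_0>d$, which fails exactly in the regime the proposition is designed for (e.g.\ $d\geq 3$ and $p_0$ close to $2$, where $n_0=\lfloor d/2\rfloor+2\leq d$). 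Consequently the ``standard convolution-with-decaying-profile estimate'' $(|K_j|\ast g)(x)\lesssim \bM g(x)$, which genuinely needs an integrable radially decreasing majorant, is unjustified, and so is your derivation of $\|K_j\|_{L^1}\lesssim 2^{j\varepsilon\gamma}e^{-c\kappa t2^{j\gamma}}$ (that $L^1$ bound is in fact true, but the paper proves it differently, in Theorem \ref{22.02.15.11.27}(ii), by splitting $|x|\leq 2^{-j}$ and $|x|>2^{-j}$ and using Cauchy--Schwarz against the weighted $L^2$ bound with weight $|x|^{2(\lfloor d/2\rfloor+1)}$). The whole point of the exponent $p_0$ in the statement is that the kernel tails can only be controlled in $L^{p_0'}$-averaged form (requiring decay $>d/p_0$, not $>d$) and must be paired with local $L^{p_0}$-averages of $f$; this is precisely what Lemmas \ref{20.12.17.20.21}--\ref{22.01.28.17.18} implement and what your $L^1$/$\bM f$ pipeline bypasses. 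Your closing heuristic that $\lfloor d/p_0\rfloor+1$ derivatives already make the profile integrable suffers from the same miscount.

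A second, smaller defect: the absorption ``$\bM\circ\bM\lesssim\bM$'' is false as a pointwise inequality (take $g=1_{B_1}$: $\bM(\bM g)(x)\gtrsim |x|^{-d}\log|x|$ for large $|x|$), and it is not what ``$A_\infty$ self-improvement'' says. The correct tool, had the earlier steps been valid, is the Coifman--Rochberg theorem: since $1/p_0<1$, $(\bM(|f|^{p_0}))^{1/p_0}$ is an $A_1$ weight, so $\bM\bigl((\bM(|f|^{p_0}))^{1/p_0}\bigr)\lesssim (\bM(|f|^{p_0}))^{1/p_0}$ pointwise; but this must be applied after taking $p_0$-th roots, not in the squared-out form you wrote. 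Even with that repair, the estimate you would obtain carries the single-rate exponential $e^{-\kappa t2^{j\gamma}(1-\delta)/2^\gamma}$ rather than the stated $p_0$-fold rate, so to recover the proposition as written you would still need to run the argument at the level of $p_0$-th powers, as the paper does in Lemma \ref{20.12.21.16.26}, where the oscillation (difference) structure and the gradient bound on the kernel are also genuinely used to handle balls of radius much smaller than $2^{-j}$.
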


The proof of Proposition \ref{prop:maximal esti} is given in Sections \ref{sec:prop}.

\begin{corollary}
								\label{main ingra}
    Let $p \in (1,\infty)$ and $w \in A_p(\bR^d)$.
    Suppose that $\psi$ is a symbol satisfying the ellipticity condition with $(\gamma,\kappa)$ and having the $\left( \left\lfloor \frac{d}{R_{p,d}^{w}}\right\rfloor+2\right)$-times regular upper bound with $(\gamma,M)$. 
        Then there exist positive constants $N$, $N'$, $N''$, and $N'''$ such that for all $t\in (0,\infty)$, $f\in   \cS(\bR^{d})$, and $j \in \bZ$,
\begin{align*}
\big\|\cT_{t,0}^{\varepsilon,j}f\big\|_{L_p(\bR^d, w\,\mathrm{d}x)} \leq N2^{j\varepsilon\gamma}\mathrm{e}^{-\kappa t 2^{j\gamma}\times\frac{(1-\delta)}{2^{\gamma}}} \big\|f \big\|_{L_p(\bR^d, w\,\mathrm{d}x)} \quad \forall \delta \in (0,1),
\end{align*}
\begin{align*}
\big\|\cT_{t,0}^{\varepsilon,\leq 0}f\big\|_{L_p(\bR^d, w\,\mathrm{d}x)} \leq N' \big\|f \big\|_{L_p(\bR^d, w\,\mathrm{d}x)},
\end{align*}
\begin{align*}
\big\|\psi(t,-i\nabla)\cT_{t,0}^{j}f \big\|_{L_p(\bR^d, w\,\mathrm{d}x)} \leq N''2^{j\gamma}\mathrm{e}^{-\kappa t 2^{j\gamma}\times\frac{(1-\delta)}{2^{\gamma}}} \big\|f \big\|_{L_p(\bR^d, w\,\mathrm{d}x)} \quad \forall \delta \in (0,1),
\end{align*}
\begin{align*}
\big\|\psi(t,-i\nabla)\cT_{t,0}^{\leq0}f \big\|_{L_p(\bR^d, w\,\mathrm{d}x)} \leq N''' \big\|f \big\|_{L_p(\bR^d, w\,\mathrm{d}x)},
\end{align*}
where the dependence of constants $N$, $N'$, $N''$, and $N'''$ is given similarly to those in Proposition \ref{prop:maximal esti} with additional dependence to $p$ and an upper bound of the $A_p$ semi-norm of $w$.
\end{corollary}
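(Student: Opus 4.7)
The plan is to derive each of the four weighted $L_p$-bounds by a standard three-step combination: the weighted Fefferman--Stein inequality, the pointwise sharp-function control from Proposition \ref{prop:maximal esti}, and the weighted Hardy--Littlewood maximal inequality. All four estimates follow the same template, so I will describe it once for $\cT_{t,0}^{\varepsilon,j}$ and indicate the obvious modifications for the remaining three operators.

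First, I would choose $p_0 \in (1,R_{p,d}^{w}]$ close enough to $R_{p,d}^{w}$ so that $\lfloor d/p_0 \rfloor = \lfloor d/R_{p,d}^{w}\rfloor$; by the definition of $R_{p,d}^{w}$ in \eqref{2021-01-19-01} and the reverse H\"older self-improvement of Muckenhoupt's class, such a $p_0$ satisfies $w\in A_{p/p_0}(\bR^d)$ with a quantitative control of $[w]_{A_{p/p_0}(\bR^d)}$ in terms of $[w]_{A_p(\bR^d)}$. This choice is precisely the one assumed in Proposition \ref{prop:maximal esti}, so that proposition becomes available.

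Next, for a fixed $t$, apply the weighted Fefferman--Stein inequality from Proposition \ref{lem:FS ineq} to the function $\cT_{t,0}^{\varepsilon,j} f$ in the $x$-variable, yielding
\begin{align*}
\bigl\|\cT_{t,0}^{\varepsilon,j} f\bigr\|_{L_p(\bR^d,w\,dx)}
\leq N \bigl\|\bM_x^{\sharp}\bigl(\cT_{t,0}^{\varepsilon,j} f\bigr)\bigr\|_{L_p(\bR^d,w\,dx)}.
\end{align*}
Inserting the pointwise bound from Proposition \ref{prop:maximal esti} gives
\begin{align*}
\bigl\|\cT_{t,0}^{\varepsilon,j} f\bigr\|_{L_p(\bR^d,w\,dx)}
\leq N\, 2^{j\varepsilon\gamma} e^{-\kappa t 2^{j\gamma}\cdot p_0(1-\delta)/2^{\gamma}}
\bigl\|\bigl(\bM(|f|^{p_0})\bigr)^{1/p_0}\bigr\|_{L_p(\bR^d,w\,dx)}.
\end{align*}
Since $\bigl\|(\bM(|f|^{p_0}))^{1/p_0}\bigr\|_{L_p(\bR^d,w\,dx)}^p = \bigl\|\bM(|f|^{p_0})\bigr\|_{L_{p/p_0}(\bR^d,w\,dx)}^{p/p_0}$ and $w\in A_{p/p_0}(\bR^d)$, the weighted Hardy--Littlewood inequality (again from Proposition \ref{lem:FS ineq}, applied with exponent $p/p_0 > 1$) controls the last factor by $N\bigl\||f|^{p_0}\bigr\|_{L_{p/p_0}(\bR^d,w\,dx)}^{p/p_0} = N\|f\|_{L_p(\bR^d,w\,dx)}^{p}$. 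Finally, because $p_0 > 1$, we have $p_0(1-\delta) \geq 1-\delta$ for every $\delta \in (0,1)$, so the exponential factor is no larger than $e^{-\kappa t 2^{j\gamma}(1-\delta)/2^{\gamma}}$, and the claimed bound follows. The estimates for $\cT_{t,0}^{\varepsilon,\leq 0}$, $\psi(t,-i\nabla)\cT_{t,0}^{j}$, and $\psi(t,-i\nabla)\cT_{t,0}^{\leq 0}$ are proved by the same scheme, using the respective inequalities of Proposition \ref{prop:maximal esti}.

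There is no genuine obstacle here: all of the real work has been done in Proposition \ref{prop:maximal esti} and in the weighted maximal/sharp function theory recalled in Proposition \ref{lem:FS ineq}. The only technical care needed is to confirm that $p_0$ can be chosen compatibly with both the integer-part constraint $\lfloor d/p_0\rfloor = \lfloor d/R_{p,d}^{w}\rfloor$ and the membership $w \in A_{p/p_0}(\bR^d)$ with a controlled seminorm, and to track the quantitative dependence of constants on $p$ and $[w]_{A_p(\bR^d)}$ as indicated in the statement.
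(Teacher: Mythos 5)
Your proposal is correct and is essentially the paper's own argument: choose $p_0$ strictly below $R_{p,d}^{w}$ (so that $w\in A_{p/p_0}(\bR^d)$, which automatically forces $p_0<p$) with $\lfloor d/p_0\rfloor=\lfloor d/R_{p,d}^{w}\rfloor$, apply Proposition \ref{prop:maximal esti}, then the weighted Fefferman--Stein and Hardy--Littlewood inequalities of Proposition \ref{lem:FS ineq} with exponent $p/p_0>1$, and absorb the factor $p_0>1$ into the exponential. The only cosmetic caveat is that you should take $p_0$ in the open interval $(1,R_{p,d}^{w})$ rather than allowing the endpoint, since membership $w\in A_{p/R_{p,d}^{w}}(\bR^d)$ need not hold at the supremum.
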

\begin{proof}
First, we claim that there exists a $p_0 \in (1,2]$ such that 
$p_0 < R_{p,d}^w \wedge p$, $w \in A_{p/p_0}(\bR^d)$, and $\left\lfloor\frac{d}{p_0}\right\rfloor = \left\lfloor\frac{d}{R_{p,d}^w}\right\rfloor$.
Recall that 
\begin{align*}
R_{p,d}^{w} := \sup \left\{ p_0 \in (1,2] :  w \in A_{p/p_0}(\bR^d) \right\}.
\end{align*}
It is well-known that $ R_{p,d}^{w} >1$ and 
$$
w \in A_{p/p_0}(\bR^d) \quad \forall p_0 \in \left(1,R_{p,d}^{w} \right)
$$
due to the reverse H\"older inequality (\textit{e.g.} see \cite[Remark 2.2]{Choi_Kim2022}).
Note that $\left\lfloor\frac{d}{p}\right\rfloor$ is left-continuous and piecewise-constant with respect to $p$.
Therefore, there exists a $p_0 \in \left(1,R_{p,d}^{w} \right)$ such that $\left\lfloor\frac{d}{p_0}\right\rfloor = \left\lfloor\frac{d}{R_{p,d}^w}\right\rfloor$
and $w \in A_{p/p_0}(\bR^d)$. 
It only remains to show that $p_0 \in \left(1,R_{p,d}^{w} \right)$ above is less than or equal to $p$. 
If $p \geq 2$, then it is obvious since $ p_0< 2$.
Thus, we only consider the case $p \in (1,2)$.
Recall that $A_p(\bR^d)$ is defined only for $p>1$ ($A_1(\bR^d)$-class is not introduced in this paper (see Definition \eqref{def ap}).
Thus any $p_0 \in (1,2)$ such that $w \in A_{p/p_0}(\bR^d)$ is obviously less than to $p$. 

Next, we apply the weighted version of the Hardy--Littlewood Theorem and Fefferman--Stein Theorem.
Let $f \in \cS(\bR^d)$ and choose a $p_0$ in the claim, i.e. $p_0 \in \left(1, R_{p,d}^w \wedge p\right)$, $w \in A_{p/p_0}(\bR^d)$, and
$\left\lfloor\frac{d}{p_0}\right\rfloor = \left\lfloor\frac{d}{R_{p,d}^w}\right\rfloor$.
Then by Proposition \ref{prop:maximal esti}, we have
    \begin{equation*}
    \begin{gathered}
     \bM^{\sharp}_x\left(\cT_{t,0}^{\varepsilon,j}f\right)(x)
     \leq N2^{j\varepsilon\gamma}\mathrm{e}^{-\kappa t 2^{j\gamma}\times\frac{p_0(1-\delta)}{2^{\gamma}}}\left(\bM\left(|f|^{p_0}\right)(x)\right)^{1/p_0},\quad \forall\delta\in(0,1),\\ 
        \bM^{\sharp}_x\left(\cT_{t,0}^{\varepsilon,\leq 0}f\right)(x)\leq N'\left(\bM\left(|f|^{p_0}\right)(x)\right)^{1/p_0},\\
        \bM^{\sharp}_x\left(\psi(t,-i\nabla)\cT_{t,0}^{j}f\right)(x)
        \leq N''2^{j\gamma}\mathrm{e}^{-\kappa t 2^{j\gamma}\times\frac{p_0(1-\delta)}{2^{\gamma}}}\left(\bM\left(|f|^{p_0}\right)(x)\right)^{1/p_0},\quad \forall\delta\in(0,1),\\ 
        \bM^{\sharp}_x\left(\psi(t,-i\nabla)\cT_{t,0}^{\leq0}f\right)(x)\leq N'''\left(\bM\left(|f|^{p_0}\right)(x)\right)^{1/p_0}.
    \end{gathered}
    \end{equation*}
Moreover, recalling $\frac{p}{p_0} \in (1,\infty)$ and applying Proposition \ref{lem:FS ineq}, we obtain
\begin{align*}
\big\|\cT_{t,0}^{\varepsilon,j}f\big\|_{L_p(\bR^d, w\,\mathrm{d}x)} \leq N2^{j\varepsilon\gamma}\mathrm{e}^{-\kappa t 2^{j\gamma}\times\frac{p_0(1-\delta)}{2^{\gamma}}} \big\|f \big\|_{L_p(\bR^d, w\,\mathrm{d}x)} \quad \forall \delta \in (0,1),
\end{align*}
\begin{align*}
\big\|\cT_{t,0}^{\varepsilon,\leq 0}f\big\|_{L_p(\bR^d, w\,\mathrm{d}x)} \leq N' \big\|f \big\|_{L_p(\bR^d, w\,\mathrm{d}x)},
\end{align*}
\begin{align*}
\big\|\psi(t,-i\nabla)\cT_{t,0}^{j}f \big\|_{L_p(\bR^d, w\,\mathrm{d}x)} \leq N''2^{j\gamma}\mathrm{e}^{-\kappa t 2^{j\gamma}\times\frac{p_0(1-\delta)}{2^{\gamma}}} \big\|f \big\|_{L_p(\bR^d, w\,\mathrm{d}x)} \quad \forall \delta \in (0,1),
\end{align*}
\begin{align*}
\big\|\psi(t,-i\nabla)\cT_{t,0}^{\leq0}f \big\|_{L_p(\bR^d, w\,\mathrm{d}x)} \leq N''' \big\|f \big\|_{L_p(\bR^d, w\,\mathrm{d}x)}.
\end{align*}
Since $p_0 \in (1,2)$, the corollary is proved. 
\end{proof}

Based on Corollary \ref{main ingra}, we prove the following main estimate, which yields Theorem \ref{22.12.27.16.53}.
\begin{thm}\label{thm:ep gamma esti}
    Let $p\in(1,\infty)$, $q\in(0,\infty)$, $\varepsilon \in [0,1]$, $a \in (0,\infty)$, $w\in A_p(\bR^d)$, $\mu$ be a nonnegative Borel measrue on $(0,\infty)$, and $\boldsymbol{\mu}:\bZ\to(-\infty,\infty)$ be a sequence having the controlled difference. 
    Suppose that $\psi$ is a symbol satisfying the ellipticity condition with $(\gamma,\kappa)$ and having the $\left( \left\lfloor \frac{d}{R_{p,d}^{w}}\right\rfloor+2\right)$-times regular upper bound with $(\gamma,M)$. 
Additionally, assume that  the Laplace transform of $\mu$ is controlled by a sequence $\boldsymbol{\mu}$ in a $\gamma$-dyadic way with parameter $a$, \textit{i.e.}
\begin{align}
								\label{laplace cond}
\cL_{\mu}(2^{\gamma j})
:= \int_0^\infty \exp\left( - 2^{\gamma j} t \right) \mu(\mathrm{d}t) \leq N_{\cL_\mu} \cdot 2^{j\gamma a}2^{-\boldsymbol{\mu}(j)},\quad \forall j\in\bZ.
\end{align}
Then there exists a positive constant $N$ such that for any $u_0\in C_c^{\infty}(\bR^d)$,
  \begin{align}
									\label{20230118 01}
        \int_{0}^T\left\|\Delta^{\frac{\varepsilon\gamma}{2}}\cT_{t,0} u_0 \right\|_{L_p(\bR^d,w\,\mathrm{d}x)}^q  
        t^a\mu\left(\frac{(1\wedge q) \kappa}{16^\gamma}\mathrm{d}t\right) 
       \leq N(1+ \mu_{a,T,\kappa,\gamma,q})\|u_0\|^q_{B_{p,q}^{\varepsilon\boldsymbol{\gamma}-\frac{\boldsymbol{\mu}}{q}}(\bR^d,w\,\mathrm{d}x)},
    \end{align}
        \begin{align}
									\label{20230118 02}
        \int_{0}^T\left\|\Delta^{\frac{\varepsilon\gamma}{2}} \cT_{t,0} u_0\right\|_{L_p(\bR^d,w\,\mathrm{d}x)}^q t^a \mu\left(\frac{(1\wedge q) \kappa}{16^\gamma}\mathrm{d}t\right)\leq N\|u_0\|^q_{\dot{B}_{p,q}^{\varepsilon \boldsymbol{\gamma}-\frac{\boldsymbol{\mu}}{q}}(\bR^d,w\,\mathrm{d}x)},
    \end{align}
  \begin{equation}
  \begin{aligned}
									\label{20230118 03}
        &\int_{0}^T\left\|\psi(t,-i\nabla)\cT_{t,0} u_0 \right\|_{L_p(\bR^d,w\,\mathrm{d}x)}^q  
        t^a\mu\left(\frac{(1\wedge q) \kappa}{16^\gamma}\mathrm{d}t\right) 
       \\
       \leq& N'(1+\mu_{a,T,\kappa,\gamma,q})\|u_0\|^q_{B_{p,q}^{\boldsymbol{\gamma}-\frac{\boldsymbol{\mu}}{q}}(\bR^d,w\,\mathrm{d}x)},
    \end{aligned}
    \end{equation}
 and   
        \begin{align}
									\label{20230118 04}
        \int_{0}^T\left\|\psi(t,-i\nabla)\cT_{t,0} u_0\right\|_{L_p(\bR^d,w\,\mathrm{d}x)}^q t^a \mu\left(\frac{(1\wedge q) \kappa}{16^\gamma}\mathrm{d}t\right)\leq N'\|u_0\|^q_{\dot{B}_{p,q}^{\boldsymbol{\gamma}-\frac{\boldsymbol{\mu}}{q}}(\bR^d,w\,\mathrm{d}x)},
    \end{align}
    where $[w]_{A_p(\bR^d)}\leq K$,
    $$
    N=N(a,  N_{\cL_\mu},d,\varepsilon, \gamma,\kappa,K,M,p,q),\quad N'=N'(a,  N_{\cL_\mu},d, \gamma,\kappa,K,M,p,q,R_{p,d}^w),$$  $\varepsilon \boldsymbol{\gamma}(j)=\varepsilon \gamma j$ for all $j \in \bZ$, and
\begin{align*}
\mu_{a,T,\kappa,\gamma,q}=\int_0^T  t^a\mu\left(\frac{(1\wedge q) \kappa}{16^\gamma}\mathrm{d}t\right).
\end{align*}
\end{thm}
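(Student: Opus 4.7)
The plan is to prove all four estimates by an inhomogeneous Littlewood--Paley decomposition in $x$, per-frequency pointwise bounds from Corollary~\ref{main ingra}, and time integration against $t^a\mu(c\,dt)$ controlled via the Laplace hypothesis~\eqref{laplace cond}. I focus on \eqref{20230118 01}; the remaining estimates follow with minor modifications.

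First I decompose $u_0=S_0 u_0+\sum_{j\ge 1}\Delta_j u_0$ (replaced by the full homogeneous sum $u_0=\sum_{j\in\bZ}\Delta_j u_0$ for \eqref{20230118 02} and \eqref{20230118 04}). Since $\cT^{\varepsilon}_{t,0}$ is an $x$-Fourier multiplier, one has $\cT^{\varepsilon,j}_{t,0}u_0=\Delta_j\cT^{\varepsilon}_{t,0}u_0=\cT^{\varepsilon,j}_{t,0}\widetilde\Delta_j u_0$, where $\widetilde\Delta_j$ is a fattened Littlewood--Paley projection whose symbol equals $1$ on $\mathrm{supp}\,\cF[\Psi](2^{-j}\cdot)$, and likewise for $\cT^{\varepsilon,\le 0}_{t,0}$ and for the $\psi(t,-i\nabla)\cT_{t,0}$ variants. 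Inserting $\widetilde\Delta_j u_0$ into Corollary~\ref{main ingra} yields, for any $\delta\in(0,1)$,
\begin{equation*}
\|\cT^{\varepsilon,j}_{t,0}u_0\|_{L_p(w)}\lesssim 2^{j\varepsilon\gamma}\,e^{-\kappa t\,2^{j\gamma}(1-\delta)/2^{\gamma}}\|\widetilde\Delta_j u_0\|_{L_p(w)},\qquad
\|\cT^{\varepsilon,\le 0}_{t,0}u_0\|_{L_p(w)}\lesssim \|\widetilde S_0 u_0\|_{L_p(w)}.
\end{equation*}

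Next I raise to the $q$-th power and integrate against $t^a\mu((1\wedge q)\kappa/16^\gamma\,dt)$. Combining $t^a e^{-\lambda t}\lesssim\lambda^{-a}e^{-\lambda t/2}$ with the scaling identity $\int f(t)\mu(c\,dt)=\int f(t/c)\mu(dt)$ and monotonicity of $\cL_\mu$ reduces the time integral to a Laplace transform evaluated at a point of size $\sim 2^{j\gamma}$; hypothesis \eqref{laplace cond} bounds this by $2^{j\gamma a}2^{-\boldsymbol\mu(j)}$, and the controlled difference of $\boldsymbol\mu$ absorbs the $O(1)$ integer shifts in $j$ produced by the rescaling. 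The outcome is the per-frequency estimate
\begin{equation*}
\int_0^T\|\cT^{\varepsilon,j}_{t,0}u_0\|_{L_p(w)}^q\,t^a\mu\!\left(\frac{(1\wedge q)\kappa}{16^\gamma}\,dt\right)\lesssim 2^{jq\varepsilon\gamma-\boldsymbol\mu(j)}\|\widetilde\Delta_j u_0\|_{L_p(w)}^q,
\end{equation*}
valid for every $j\in\bZ$. The specific constant $c=(1\wedge q)\kappa/16^\gamma$ is chosen so that the factor $q\kappa(1-\delta)/(2^\gamma c)$ appearing in the Laplace variable scales to $\sim 2^{j\gamma}$ uniformly in $q\in(0,\infty)$.

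To assemble the sum in $j$, when $q\in(0,1]$ I use the sub-additivity $(x+y)^q\le x^q+y^q$ combined with the triangle inequality in $L_p(w)$ to obtain, pointwise in $t$,
\begin{equation*}
\|\cT^\varepsilon_{t,0}u_0(t)\|_{L_p(w)}^q\le\|\cT^{\varepsilon,\le 0}_{t,0}u_0(t)\|_{L_p(w)}^q+\sum_{j\ge 1}\|\cT^{\varepsilon,j}_{t,0}u_0(t)\|_{L_p(w)}^q,
\end{equation*}
and, after integrating and re-indexing via $\|\widetilde\Delta_j u_0\|_{L_p(w)}\lesssim\sum_{|k-j|\le 1}\|\Delta_k u_0\|_{L_p(w)}$, this reassembles into $\mu_{a,T,\kappa,\gamma,q}\|S_0 u_0\|_{L_p(w)}^q+\|u_0\|_{B_{p,q}^{\varepsilon\boldsymbol\gamma-\boldsymbol\mu/q}}^q$, giving \eqref{20230118 01}. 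Estimate \eqref{20230118 02} is obtained by dropping the $S_0$-contribution and summing over all $j\in\bZ$, while \eqref{20230118 03}--\eqref{20230118 04} follow identically with $\psi(t,-i\nabla)\cT^{j}_{t,0}$ in place of $\cT^{\varepsilon,j}_{t,0}$, using the third and fourth bounds of Corollary~\ref{main ingra}. The hard step will be the summation in the case $q\in(1,\infty)$: pointwise sub-additivity is no longer available, and the naive triangle inequality in $L_q((0,T),t^a\mu(c\,dt);L_p(w))$ produces an $\ell^1$-sum of the per-frequency bounds, strictly weaker than the $\ell^q$-sum appearing in the Besov norm. The remedy will exploit the spectral localization of each $\cT^{\varepsilon,j}_{t,0}u_0$ in $\{|\xi|\sim 2^j\}$: the weighted vector-valued Littlewood--Paley theorem from the appendix replaces $\|\cT^\varepsilon_{t,0}u_0(t)\|_{L_p(w)}$ by the square function $\|(\sum_j|\cT^{\varepsilon,j}_{t,0}u_0|^2)^{1/2}\|_{L_p(w)}$, and a Fefferman--Stein-type vector-valued maximal inequality then transfers the $\ell^2$ structure to the $\ell^q$ sum required by the Besov exponent, whereupon the per-$j$ estimate from the previous step closes the argument.
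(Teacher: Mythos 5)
Your setup — the (fattened) Littlewood--Paley decomposition in $x$, the reduction to per-frequency bounds via Corollary~\ref{main ingra}, the choice of the scaling constant $\frac{(1\wedge q)\kappa}{16^\gamma}$, the absorption of $t^a$ by $t^ae^{-\lambda t}\lesssim \lambda^{-a}e^{-\lambda t/2}$, and the use of \eqref{laplace cond} together with the controlled difference of $\boldsymbol{\mu}$ — reproduces the paper's argument, and your per-frequency estimate $\int_0^T\|\cT^{\varepsilon,j}_{t,0}u_0\|^q_{L_p(w)}t^a\mu(c\,dt)\lesssim 2^{jq\varepsilon\gamma-\boldsymbol{\mu}(j)}\|\widetilde\Delta_j u_0\|^q_{L_p(w)}$ is correct, as is the assembly for $q\in(0,1]$ by sub-additivity.

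However, there is a genuine gap exactly at the step you flag as ``the hard step,'' $q\in(1,\infty)$. The proposed remedy — replace $\|\cT^\varepsilon_{t,0}u_0(t)\|_{L_p(w)}$ by the square function via the weighted Littlewood--Paley theorem and then invoke ``a Fefferman--Stein-type vector-valued maximal inequality'' to ``transfer the $\ell^2$ structure to the $\ell^q$ sum'' — is not a valid argument. The vector-valued maximal inequality controls $L_p(\ell^s)$ norms of maximal functions; it does not convert the norm $\big\|\,\|(\sum_j|F_j(t)|^2)^{1/2}\|_{L_p(w)}\big\|_{L_q(t^a\mu)}$ into the $\ell^q_j$ sum of $\|F_j\|_{L_q(t^a\mu;L_p(w))}$ that the Besov norm requires. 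Minkowski-type embeddings give such an interchange only for $q\le\min(p,2)$; for larger $q$ it is false for general spectrally localized families, and the only saving structure here is the exponential time decay, i.e.\ that $\cT^{\varepsilon,j}_{t,0}u_0$ is negligible for $t\gg2^{-j\gamma}$. The paper's proof does not use a square function at all: after Minkowski it estimates $\int_0^T\big(\sum_j 2^{j\varepsilon\gamma}e^{-\kappa t2^{j\gamma}2^{-3\gamma}}\|\Delta_ju_0\|_{L_p(w)}\big)^q t^a\mu(dt)$ by splitting $\bZ$ at $2^{j\gamma}t\approx1$ into $\cI_1(t)$ and $\cI_2(t)$, inserting geometric weights $2^{\pm j\gamma b/q}$ with $b\in(0,a]$ chosen so small that $a\ge \frac{b}{(q-1)2^\gamma}$, applying H\"older's inequality in $j$, bounding the weight sums by $Nt^{-b/(q-1)}$, and then absorbing $t^{a-b}$ and invoking \eqref{laplace cond} to land on $\sum_j 2^{q\varepsilon\boldsymbol{\gamma}(j)-\boldsymbol{\mu}(j)}\|\Delta_ju_0\|^q_{L_p(w)}$. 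You need this H\"older-in-$j$ decoupling (or an equivalent real-interpolation argument exploiting the time decay) to close the case $q>1$; as written, your proof does not.
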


\begin{proof}
Due to the upper bounds of $L_p$-norms in Corollary \ref{main ingra},
the proofs of \eqref{20230118 03} and \eqref{20230118 04} are very similar to those of \eqref{20230118 01} and \eqref{20230118 02} when $\varepsilon=1$. 
Thus we only prove \eqref{20230118 01} and \eqref{20230118 02}.
We make use of the Littlewood--Paley operators $\Delta_j$.
By using the almost orthogonal property of Littlewood--Paley operators, we have (at least in a distribution sense)
\begin{equation*}
    \begin{aligned}
        \Delta^{\frac{\varepsilon\gamma}{2}} \cT_{t,0} u_0
        &=\sum_{j\in\bZ}\Delta_j(-\Delta)^{\varepsilon\gamma/2}\cT_{t,0}u_0\\
        &=\sum_{j\in\bZ}\sum_{i \in \bZ}\Delta_j(-\Delta)^{\varepsilon\gamma/2}\cT_{t,0}\Delta_i u_0\\
        &=\sum_{j\in\bZ}\sum_{i=-1}^1\cT_{t,0}^{\varepsilon,j+i}(\Delta_{j}u_0)\\
        &
        =\cT_{t,0}^{\varepsilon,\leq0}(S_0u_0)+\cT_{t,0}^{\varepsilon,1}(\Delta_0u_0)+\sum_{j=1}^{\infty}\sum_{i=-1}^1\cT_{t,0}^{\varepsilon,j+i}(\Delta_{j}u_0),
    \end{aligned}
\end{equation*}
    where $\cT_{t,0}^{\varepsilon,\leq0}$ and  $\cT_{t,0}^{\varepsilon, j}$ are defined in \eqref{def T ep j}.
    Thus by Minkowski's inequality,
    \begin{equation}
        \label{ineq:22.02.22.12.37}
        \begin{aligned}
        &\int_0^T \|(-\Delta)^{\varepsilon\gamma/2}\cT_{t,0} u_0\|_{L_p(\bR^d,w\,\mathrm{d}x)}^q t^a \mu\left(\frac{(1\wedge q) \kappa}{16^\gamma}\mathrm{d}t\right)\nonumber\\
        \leq& \int_0^T \bigg(\sum_{j\in\bZ}\sum_{i=-1}^1\|\cT_{t,0}^{\varepsilon,j+i}(\Delta_{j}u_0)\|_{L_p(\bR^d,w\,\mathrm{d}x)}\bigg)^q t^a \mu\left(\frac{(1\wedge q) \kappa}{16^\gamma}\mathrm{d}t\right),
\end{aligned}
    \end{equation}
and
\begin{equation}
\label{ineq:22.02.22.12.38}
\begin{aligned}
        &\int_0^T \|(-\Delta)^{\varepsilon\gamma/2}\cT_{t,0} u_0\|_{L_p(\bR^d,w\,\mathrm{d}x)}^q t^a \mu\left(\frac{(1\wedge q) \kappa}{16^\gamma}\mathrm{d}t\right)\\
        \leq&  \int_0^T \bigg( \big\|\cT_{t,0}^{\varepsilon,\leq0}(S_0u_0)+\cT_{t,0}^{\varepsilon,1}(\Delta_0u_0)\big\|_{L_p(\bR^d, w\,\mathrm{d}x)}\\
        &\quad\quad\quad\quad+\sum_{j=1}^{\infty}\sum_{i=-1}^1 \big\|\cT_{t,0}^{\varepsilon,j+i}(\Delta_{j}u_0)\big\|_{L_p(\bR^d, w\,\mathrm{d}x)} \bigg)^q t^a \mu\left(\frac{(1\wedge q) \kappa}{16^\gamma}\mathrm{d}t\right).
    \end{aligned}
\end{equation}
Moreover, by Corollary \ref{main ingra}, we have
    \begin{align}
        \sum_{i=-1}^1 \|\cT_{t,0}^{\varepsilon,j+i}(\Delta_{j}u_0)\|_{L_p(\bR^d,w\,\mathrm{d}x)} &\leq N2^{j\varepsilon\gamma}\mathrm{e}^{-\kappa t 2^{j\gamma}\times\frac{(1-\delta)}{4^{\gamma}}}\|\Delta_ju_0\|_{L_p(\bR^d,w\,\mathrm{d}x)},\label{claim:hom}
\end{align}
and
\begin{align}        
        \|\cT_{t,0}^{\varepsilon,\leq 0}(S_0u_0) \|_{L_p(\bR^d, w\,\mathrm{d}x)} &\leq N\| S_0 u_0\|_{L_p(\bR^d, w\,\mathrm{d}x)}.
        \label{claim:inhom}
    \end{align}
    Due to \eqref{ineq:22.02.22.12.38} and \eqref{claim:inhom}, to show \eqref{20230118 01}, it is sufficient to show that
    \begin{equation}
        \label{20230120 01}
        \begin{aligned}
        &\int_0^T \bigg(\sum_{j\in\bN}\sum_{i=-1}^1\|\cT_{t,0}^{\varepsilon,j+i}(\Delta_{j}u_0)\|_{L_p(\bR^d,w\,\mathrm{d}x)}\bigg)^q t^a \mu\left(\frac{(1\wedge q) \kappa}{16^\gamma}\mathrm{d}t\right)\\
        \leq&
        N \|u_0\|_{B_{p,q}^{\varepsilon\boldsymbol{\gamma}-\boldsymbol{\mu}/q}(\bR^d,w\,\mathrm{d}x)}^q.
    \end{aligned}
    \end{equation}
Similarly, to show \eqref{20230118 02}, it is sufficient to show
\begin{equation}
\label{ineq:hom}
    \begin{aligned}
        &\int_0^T \bigg(\sum_{j\in\bZ}\sum_{i=-1}^1\|\cT_{t,0}^{\varepsilon,j+i}(\Delta_{j}u_0)\|_{L_p(\bR^d,w\,\mathrm{d}x)}\bigg)^q t^a \mu\left(\frac{(1\wedge q) \kappa}{16^\gamma}\mathrm{d}t\right)\\ 
        \leq&
        N \|u_0\|_{\dot{B}_{p,q}^{\varepsilon\boldsymbol{\gamma}-\boldsymbol{\mu}/q}(\bR^d,w\,\mathrm{d}x)}^q.
    \end{aligned}
    \end{equation}
    Since the proofs of \eqref{20230120 01} and \eqref{ineq:hom} are very similar, we only focus on proving the difficult case \eqref{ineq:hom}.
    To verify \eqref{ineq:hom}, we apply \eqref{claim:hom} to \eqref{ineq:22.02.22.12.37} with $\delta= (1-2^{-\gamma})$ and obtain
    \begin{align}
							\notag
&\int_0^T \bigg(\sum_{j\in\bZ}\sum_{i=-1}^1\|\cT_{t,0}^{\varepsilon,j+i}(\Delta_{j}u_0)(t,\cdot)\|_{L_p(\bR^d,w\,\mathrm{d}x)}\bigg)^q t^a \mu\left(\frac{(1\wedge q) \kappa}{16^\gamma}\mathrm{d}t\right)  \\
								\label{20230120 11}
\leq& N\int_0^T\left(\sum_{j\in\bZ}2^{j\varepsilon\gamma}\mathrm{e}^{-\kappa t2^{j\gamma} 2^{-3\gamma}}\|\Delta_ju_0\|_{L_p(\bR^d,w\,\mathrm{d}x)}\right)^qt^a\mu\left(\frac{(1\wedge q) \kappa}{16^\gamma}\mathrm{d}t\right).
    \end{align}
We estimate it depending on the range of $q$.
First, if $q\in(0,1]$, then simply we have
    \begin{align*}
        &\int_0^T\left(\sum_{j\in\bZ}2^{j\varepsilon\gamma}\mathrm{e}^{-\kappa t2^{j\gamma}2^{-3\gamma}}\|\Delta_ju_0\|_{L_p(\bR^d,w\,\mathrm{d}x)}\right)^qt^a\mu\left(\frac{(1\wedge q) \kappa}{16^\gamma}\mathrm{d}t\right) \\
        \lesssim& \sum_{j\in\bZ}2^{qj\varepsilon\gamma}\|\Delta_ju_0\|_{L_p(\bR^d,w\,\mathrm{d}x)}^q
        \int_0^\infty \exp\left(-2^{j\gamma}2^\gamma  t\right)  t^a \mu(\mathrm{d}t) \\
        \lesssim& \sum_{j\in\bZ}2^{qj\varepsilon\gamma - j\gamma a}\|\Delta_ju_0\|_{L_p(\bR^d,w\,\mathrm{d}x)}^q
        \cL_\mu(2^{j \gamma}), 
    \end{align*}
where the simple inequality that  $t^a\mathrm{e}^{-2^{j\gamma}2^{\gamma}t} \leq N(a,\gamma)2^{-j\gamma a}\mathrm{e}^{-2^{j\gamma}t}$ is used in the last part of the computation above.
Finally applying \eqref{laplace cond}, we have \eqref{ineq:hom}.

Next, we consider the case $q\in(1,\infty)$. Divide $\bZ$ into two parts as follows:
    $$
        \bZ=\{j\in\bZ:2^{j\gamma}t\leq1\}\cup\{j\in\bZ:2^{j\gamma}t>1\}=:\cI_1(t)\cup\cI_2(t).
    $$
    Thus, we have
    \begin{equation}
    \label{cI1 cI2}
    \begin{aligned}
        &\int_0^T \bigg(\sum_{j\in\bZ}\sum_{i=-1}^1\|\cT_{t,0}^{\varepsilon,j+i}(\Delta_{j}u_0)\|_{L_p(\bR^d,w\,\mathrm{d}x)}\bigg)^qt^a\mu\left(\frac{(1\wedge q) \kappa}{16^\gamma}\mathrm{d}t\right)\\
        \leq& N\int_0^T\left(\sum_{j\in\bZ}2^{j\varepsilon\gamma}\mathrm{e}^{-\kappa t2^{j\gamma}2^{-3\gamma}}\|\Delta_ju_0\|_{L_p(\bR^d,w\,\mathrm{d}x)}\right)^qt^a\mu\left(\frac{(1\wedge q) \kappa}{16^\gamma}\mathrm{d}t\right)\\
        \leq& N\int_0^T\left(\sum_{j\in\bZ}2^{j\varepsilon\gamma}\mathrm{e}^{- t2^{j\gamma}2^\gamma}\|\Delta_ju_0\|_{L_p(\bR^d,w\,\mathrm{d}x)}\right)^qt^a\mu\left(\mathrm{d}t\right)\\
    =& N\int_0^T\left(\sum_{j\in\cI_1(t)}\cdots\right)^q\mu(\mathrm{d}t) +N \int_0^T\left(\sum_{j\in\cI_2(t)}\cdots\right)^qt^a\mu\left(\mathrm{d}t\right)
    =: N(I_1 + I_2).
    \end{aligned}
    \end{equation}
    Let $b\in (0,a]$ whose exact value will be chosen later. 
    Then we put $2^{j\gamma b/q} 2^{-j \gamma b/q}$ in the summation with respect to $\cI_1(t)$ and make use of H\"older's inequality to obtain 
    \begin{equation}
    \label{cI1}
    \begin{aligned}
        I_1&=
        \int_0^T\left(\sum_{j\in\cI_1(t)}2^{j\varepsilon\gamma}\mathrm{e}^{- t2^{j\gamma}2^\gamma}\|\Delta_ju_0\|_{L_p(\bR^d,w\,\mathrm{d}x)}\right)^q t^a \mu(\mathrm{d}t)\\
        &\leq \int_0^T\left(\sum_{j\in\cI_1(t)}2^{\frac{j\gamma b}{q-1}}\right)^{q-1}\left(\sum_{j\in\cI_1(t)}2^{jq\varepsilon\gamma-j\gamma b}\mathrm{e}^{-q t2^{j\gamma} 2^\gamma}\|\Delta_ju_0\|_{L_p(\bR^d,w\,\mathrm{d}x)}^q\right)t^a\mu(\mathrm{d}t).
    \end{aligned}
    \end{equation}
    and similarly,
    \begin{equation}
    \label{cI2}
    \begin{aligned}
        I_2&=
        \int_0^T\left(\sum_{j\in\cI_2(t)}2^{j\varepsilon\gamma} \mathrm{e}^{- t2^{j\gamma}2^\gamma} \|\Delta_ju_0\|_{L_p(\bR^d,w\,\mathrm{d}x)}\right)^q t^a\mu(\mathrm{d}t)\\
        &\leq \int_0^T\left(\sum_{j\in\cI_2(t)}2^{\frac{j\gamma b}{q-1}}\mathrm{e}^{- t2^{j\gamma}2^\gamma}\right)^{q-1}\\
        &\quad\quad\quad\times\left(\sum_{j\in\cI_2(t)}2^{jq\varepsilon\gamma-j\gamma b}\mathrm{e}^{-t2^{j\gamma}2^\gamma}\|\Delta_ju_0\|_{L_p(\bR^d,w\,\mathrm{d}x)}^q\right) t^a \mu(\mathrm{d}t).
    \end{aligned}
    \end{equation}
We estimate \eqref{cI1} first.  For each $t>0$, we can choose $j_1(t)\in\bZ$ such that $2^{j_1(t)\gamma}t\leq 1$ and $2^{(j_1(t)+1)\gamma}t>1$.
Roughly speaking, $j_1(t) \approx -\frac{1}{\gamma}\log_2t$.
    Thus, we have 
    \begin{align}
        \sum_{j\in\cI_1(t)}2^{\frac{j\gamma b}{q-1}}=\frac{2^{\frac{j_1(t)\gamma b}{q-1}}}{1-2^{-\frac{\gamma b}{q-1}}}\leq N(b,\gamma ,q)t^{-\frac{b}{q-1}}.\label{ineq:22.02.22.14.06}
    \end{align}
Putting \eqref{ineq:22.02.22.14.06} in \eqref{cI1}, we obtain
    \begin{align}
        I_1 
        &\leq 
            N
            \int_0^T 
                t^{-b} 
                \sum_{j\in\cI_1(t)} 
                    2^{jq\varepsilon\gamma-j\gamma b} 
                    \mathrm{e}^{-q t2^{j\gamma}2^\gamma}
                    \|\Delta_ju_0\|_{L_p(\bR^d,w\,\mathrm{d}x)}^q  
                t^a \mu(\mathrm{d}t)\nonumber\\
        &\leq
        N
        \sum_{j\in\bZ} 
            2^{jq\varepsilon\gamma-j\gamma b} \|\Delta_ju_0\|_{L_p(\bR^d,w\,\mathrm{d}x)}^q
            \int_0^T \mathrm{e}^{-qt 2^{j\gamma} 2^\gamma} t^{a-b}\mu(\mathrm{d}t)\nonumber\\ 
        &\leq
        N
        \sum_{j\in\bZ} 
            2^{jq\varepsilon\gamma-j\gamma b} \|\Delta_ju_0\|_{L_p(\bR^d,w\,\mathrm{d}x)}^q
            2^{-j\gamma(a-b)}\int_0^\infty \mathrm{e}^{-t 2^{j\gamma}} \mu(\mathrm{d}t)\nonumber\\             
        &\leq
        N 
        \sum_{j\in\bZ} 2^{jq\varepsilon\gamma-j\gamma a}\cL_{\mu}(2^{j\gamma})  \|\Delta_ju_0\|_{L_p(\bR^d,w\,\mathrm{d}x)}^q,\nonumber
    \end{align}
where $N=N(b,\gamma,\kappa)$.
Therefore, by \eqref{laplace cond}, 
\begin{align}
\label{ineq:cI1 final}
    I_1\leq N(b,N_{\cL_\mu},\gamma,\kappa)\sum_{j\in\bZ}2^{q\varepsilon\boldsymbol{\gamma}(j)-\boldsymbol{\mu}(j)}\|\Delta_ju_0\|_{L_p(\bR^d,w\,\mathrm{d}x)}^q.
\end{align}
    For $I_2$, we choose a sufficiently small $b\in (0,a]$ satisfying $        1\geq\frac{b}{(q-1) 2^\gamma}$.
    Then we can check that
    $$
        f(\lambda):=\lambda^{\frac{b}{q-1}}\mathrm{e}^{-2^\gamma\lambda}
    $$
    is a decreasing function on $(1,\infty)$.
    Using $f(\lambda)$, we have
    \begin{equation}
    \label{20230120 30}
    \begin{aligned}
        \sum_{j\in\cI_2(t)}2^{\frac{j\gamma b}{q-1}}\mathrm{e}^{-t2^{j\gamma}2^\gamma}&=t^{-\frac{b}{q-1}}\sum_{j\in\cI_2(t)}(2^{j\gamma}t)^{\frac{b}{q-1}}\mathrm{e}^{-t2^{j\gamma}2^\gamma}\\
        &=t^{-\frac{b}{q-1}}\sum_{j\in\cI_2(t)}f(2^{j\gamma}t)\\
        &\leq Nt^{-\frac{b}{q-1}}\int_{-\frac{1}{\gamma}\log_{2}t}^{\infty}(2^{\lambda\gamma}t)^{\frac{b}{q-1}}\mathrm{e}^{- t2^{\lambda\gamma}2^\gamma}\mathrm{d}\lambda.
\end{aligned}
\end{equation}
Moreover, applying the simple change of the variable $t 2^{\lambda \gamma} \to \lambda$, the above term is less than or equal to
\begin{align}
        Nt^{-\frac{b}{q-1}}\int_1^{\infty} \frac{f(\lambda)}{\lambda}\mathrm{d}\lambda=N(q,b,\kappa,\gamma)t^{-\frac{b}{q-1}}.\label{ineq:22.02.22.14.35}
    \end{align}
    Putting \eqref{20230120 30} and \eqref{ineq:22.02.22.14.35} in \eqref{cI2}, we have
    \begin{align}
        I_2
        &\leq N
            \int_0^{\infty} 
                        \sum_{j\in\cI_2(t)} 
                2^{jq\varepsilon\gamma-j\gamma b}
                \mathrm{e}^{- t2^{j\gamma}2^\gamma}
                \|\Delta_ju_0\|_{L_p(\bR^d,w\,\mathrm{d}x)}^q 
            t^{a-b}\mu(\mathrm{d}t)\nonumber\\
        &\leq 
        N
            \sum_{j\in\bZ} 
                2^{jq\varepsilon\gamma-j\gamma a}
                \|\Delta_ju_0\|_{L_p(\bR^d,w\,\mathrm{d}x)}^q
                \cL_{\mu}(2^{j\gamma}),\nonumber
    \end{align}
where $N=N(b,\delta,\gamma,\kappa)$. 
Therefore, by \eqref{laplace cond} again,
\begin{align}
\label{ineq:cI2 final}
    I_2\leq N(b,N_{\cL_\mu},\gamma,\kappa)\sum_{j\in\bZ}2^{q\varepsilon\boldsymbol{\gamma}(j)-\boldsymbol{\mu}(j)}\|\Delta_ju_0\|_{L_p(\bR^d,w\,\mathrm{d}x)}^q.
\end{align}
Finally combining \eqref{cI1 cI2}, \eqref{ineq:cI1 final}, and \eqref{ineq:cI2 final}, we obtain
    \begin{align*}
        \int_0^T \bigg(\sum_{j\in\bZ}\sum_{i=-1}^1\|\cT_{t,0}^{\varepsilon,j+i}(\Delta_{j}u_0)\|_{L_p(\bR^d,w\,\mathrm{d}x)}\bigg)^q t^a\mu(\mathrm{d}t)
        &\leq N\sum_{j\in\bZ} 2^{q\varepsilon\boldsymbol{\gamma}(j)-\boldsymbol{\mu}(j)} \|\Delta_ju_0\|_{L_p(\bR^d,w\,\mathrm{d}x)}^q\\
        &=  N\|u_0\|_{\dot{B}_{p,q}^{\varepsilon\boldsymbol{\gamma}-\boldsymbol{\mu}/q}(\bR^d,w\,\mathrm{d}x)}^q,
    \end{align*}
    which proves \eqref{ineq:hom} since $b$ can be chosen depending only on $a$,$\gamma$, and $q$. 
\end{proof}

\begin{proof}[Proof of Theorem \ref{22.12.27.16.53}]
Due to Proposition \ref{22.05.03.11.34}, without loss of generality, we assume that $\boldsymbol{r}(j)=\boldsymbol{\gamma}(j)=j\gamma$.
First, we prove \textit{a priori} estimates \eqref{main a priori est 0} and \eqref{main a priori est}.
By \cite[Theorem 2.1.5]{choi_thesis}, for any $u_0 \in C_c^\infty(\bR^d)$, 
    there is a unique classical solution $u \in C_p^{1,\infty}([0,T] \times \bR^d)$ to the Cauchy problem
    \begin{equation*}
            \begin{cases}
            \p_tu(t,x)=\psi(t,-i\nabla)u(t,x),\quad &(t,x)\in(0,T)\times\bR^d,\\
            u(0,x)=u_0(x),\quad & x\in\bR^d.
            \end{cases}
    \end{equation*}
and the solution $u$ is given by
    $$
    u(t,x):=\int_{\bR^d}p(t,0,x-y)u_0(y)\mathrm{d}y=\cT_{t,0}u_0(x).
    $$
Thus, due to Theorem \ref{thm:ep gamma esti}, we have \eqref{main a priori est 0} and \eqref{main a priori est}.

Next, we prove the existence of a solution.
By Proposition \ref{22.04.24.20.57}-($ii$), for $u_0\in B_{p,q}^{\boldsymbol{\gamma}-\boldsymbol{\mu}/q}(\bR^d,w\,\mathrm{d}x)$, there exists a $\{u_0^n\}_{n=1}^{\infty}\subseteq C_c^{\infty}(\bR^d)$ such that $u_0^n\to u_0$ in $B_{p,q}^{\boldsymbol{\gamma}-\boldsymbol{\mu}/q}(\bR^d,w\,\mathrm{d}x)$.
 By \cite[Theorem 2.1.5]{choi_thesis} again,  
    $$
    u_n(t,x):=\int_{\bR^d}p(t,0,x-y)u_0^n(y)\mathrm{d}y=\cT_{t,0}u_0^n(x)
    \in C_p^{1,\infty}([0,T]\times\bR^d)
    $$
    becomes a unique classical solution to the Cauchy problem
    \begin{equation*}
            \begin{cases}
            \p_tu_n(t,x)=\psi(t,-i\nabla)u_n(t,x),\quad &(t,x)\in(0,T)\times\bR^d,\\
            u_n(0,x)=u_0^n(x),\quad & x\in\bR^d.
            \end{cases}
    \end{equation*}
Moreover, due to the linearity of the equation, applying Theorem \ref{thm:ep gamma esti} again, for all $n,m \in \bN$, we have
\begin{align*}
&\int_{0}^T\left(\left\|u_n-u_m \right\|_{H_p^{\boldsymbol{\gamma}}(\bR^d,w\,\mathrm{d}x))}^q +\left\|\psi(t,-i\nabla)(u_n - u_m) \right\|_{L_p(\bR^d,w\,\mathrm{d}x)}^q   \right)
        t^a\mu\left(\frac{(1\wedge q) \kappa}{16^\gamma}\mathrm{d}t\right)  \\
\leq& N'(1+\mu_{a,T,\kappa,\gamma,q})\|u^n_0-u^m\|^q_{B_{p,q}^{\boldsymbol{\gamma}-\frac{\boldsymbol{\mu}}{q}}(\bR^d,w\,\mathrm{d}x)}.
\end{align*}
In particular, $u_n$ becomes a Cauchy sequence in 
$$
L_q\left((0,T),t^a \mu\left(\frac{(1\wedge q) \kappa}{16^\gamma}\mathrm{d}t\right);H_p^{\boldsymbol{\gamma}}(\bR^d,w\,\mathrm{d}x)\right).
$$
Since the above space is a quasi-Banach space, there exists a $u$ which is given by the limit of $u_n$ in the space.
Therefore, by Definition \ref{def sol}, this $u$ becomes a solution. 
    
At last, we prove the uniqueness of a solution.
We assume that there exist two solutions $u$ and $v$.
Then by Definition \ref{def sol}, there exist
    $u_n,v_n\in C_p^{1,\infty}([0,T]\times\bR^d)$ such that $u_n(0,\cdot),v_n(0,\cdot)\in C_c^{\infty}(\bR^d)$,
\begin{equation*}
\begin{gathered}
\partial_tu_n(t,x)=\psi(t,-i\nabla)u_n(t,x),\quad \partial_tv_n(t,x)=\psi(t,-i\nabla)v_n(t,x)\quad \forall (t,x)\in(0,T)\times\bR^d,\\
u_n(0,\cdot), v_n(0,\cdot)\to u_0 \quad\text{in}\quad B_{p,q}^{\boldsymbol{\gamma}-\boldsymbol{\mu}/q}(\bR^d,w\,\mathrm{d}x),
\end{gathered}
\end{equation*}
and $u_n\to u$, $v_n\to v$ in 
$$
L_q\left((0,T),t^a \mu\left(\frac{(1\wedge q) \kappa}{16^\gamma}\mathrm{d}t\right);H_p^{\boldsymbol{\gamma}}(\bR^d,w\,\mathrm{d}x)\right).
$$ 
as $n\to\infty$. Then $w_n:=u_n-v_n$ satisfies
    \begin{equation*}
            \begin{cases}
            \p_tw_n(t,x)=\psi(t,-i\nabla)w_n(t,x),\quad &(t,x)\in(0,T)\times\bR^d,\\
            w_n(0,x)=u_n(0,x)-v_n(0,x),\quad & x\in\bR^d.
            \end{cases}
    \end{equation*}
    Due to \cite[Theorem 2.1.5]{choi_thesis} and Theorem \ref{thm:ep gamma esti}, we conclude that $w_n\to0$ in 
    $$
L_q\left((0,T),t^a \mu\left(\frac{(1\wedge q) \kappa}{16^\gamma} \mathrm{d}t\right);H_p^{\boldsymbol{\gamma}}(\bR^d,w\,\mathrm{d}x)\right).
$$  as $n\to\infty$. 
Since the limit is unique,  
$$
0=\lim_{n \to \infty} w_n = \lim_{n \to \infty}u_n - \lim_{n \to \infty}v_n = u -v.
$$
The theorem is proved.
\end{proof}

\mysection{Proof of Proposition \ref{prop:maximal esti}}\label{sec:prop}

Recall
\begin{align}
						\label{def fund}
p(t,s,x):=1_{0 < s< t} \cdot \frac{1}{(2\pi)^{d/2}}\int_{\bR^d} \exp\left(\int_{s}^t\psi(r,\xi)\mathrm{d}r\right)\mathrm{e}^{ix\cdot\xi}\mathrm{d}\xi
\end{align}
and
\begin{align}
						\label{def kernel}
    P_{\varepsilon}(t,s,x):=(-\Delta)^{\varepsilon\gamma/2}p(t,s,x),\quad \varepsilon\in[0,1].
\end{align}
We fix $\varepsilon \in [0,1]$ throughout this section.
The proof of Proposition \ref{prop:maximal esti} is twofold.
In the first subsection, we obtain quantitative estimates for the kernel $ P_{\varepsilon}$.
In the special case $\varepsilon =0$, we obtain an estimate for the fundamental solution $P_{0}(t,s,x):=p(t,s,x)$ to \eqref{eqn:model eqn}, which means
\begin{equation*}
    \begin{cases}
        \p_tp(t,s,x)=\psi(t,-i\nabla)p(t,s,x),\quad &(t,x)\in(s,\infty)\times\bR^d,\\
        \lim_{t \downarrow s}p(t,s,x)=\delta_0(x),\quad &x\in\bR^d.
    \end{cases}
\end{equation*}
Here, $\delta_0$ is the Dirac measure centered on the origin. 
We show some quantitative estimates in the first subsection and 
then, by using these estimates, prove an important lemma which controls mean oscillations of operators $\cT_{t,0}^{\varepsilon, j}$, $\cT_{t,0}^{\varepsilon, \leq 0}$,
$\psi(t,-i\nabla)\cT_{t,0}^{ j}$, and $\psi(t,-i\nabla) \cT_{t,0}^{\leq 0}$ in the second subsection.
Finally, we prove Proposition \ref{prop:maximal esti}  based on the mean oscillation estimates.

\subsection{Estimates on fundamental solutions}
Recall
\begin{align*}
\psi(t,-i \nabla)p(t,s,x):=1_{0 < s< t} \cdot \frac{1}{(2\pi)^{d/2}}\int_{\bR^d} \psi(t,\xi)\exp\left(\int_{s}^t\psi(r,\xi)\mathrm{d}r\right)\mathrm{e}^{ix\cdot\xi}\mathrm{d}\xi.
\end{align*}
Here is our main kernel estimate.
\begin{thm}\label{22.02.15.11.27}
    Let $k$ be an integer such that $k>\lfloor d/2\rfloor$.
    Assume that $\psi(t,\xi)$ satisfies the ellipticity condition with $(\gamma,\kappa)$ and has the $k$-times regular upper bound with $(\gamma,M)$.
    \begin{enumerate}[(i)]
        \item 
        Let $p\in[2,\infty]$, $(n,m,|\alpha|)\in[0,k]\times\{0,1\}\times \{0,1,2\}$, and $\delta\in(0,1)$. Then there exists a positive constant $N=N(|\alpha|,d,\delta,\varepsilon,\gamma,\kappa,M,m,n)$ such that for all $t>s>0$,
        \begin{equation}
            \label{22.01.27.13.46}
            \left\||\cdot|^{n}\p_t^mD_x^{\alpha}\Delta_jP_{\varepsilon}(t,s,\cdot)\right\|_{L_p(\bR^d)}\leq N\mathrm{e}^{-\kappa|t-s|2^{j\gamma}\times\frac{(1-\delta)}{2^{\gamma}}}2^{j\left((m+\varepsilon)\gamma+|\alpha|-n+d/p'\right)},
        \end{equation}
    where $p'$ is the H\"older conjugate of $p$, \textit{i.e.} $1/p+1/p'=1$ $(p'=1$ if $p=\infty)$.
    
        \item 
        Let $p\in[1,2]$ and $(m,|\alpha|)\in\times\{0,1\}\times \{0,1,2\}$, and $\delta\in(0,1)$.
        Then there exists a positive constant $N=N(|\alpha|,d,\delta,\varepsilon,\gamma,\kappa,M,m)$ such that for all $t>s>0$,
        \begin{equation}\label{22.01.27.13.58}
            \left\|\p_t^mD_x^{\alpha}\Delta_jP_{\varepsilon}(t,s,\cdot)\right\|_{L_p(\bR^d)}\leq N\mathrm{e}^{-\kappa|t-s|2^{j\gamma}\times\frac{(1-\delta)}{2^{\gamma}}}2^{j\left((m+\varepsilon)\gamma+|\alpha|+d/p'\right)}.
        \end{equation}

        \item 
        Let $p\in[2,\infty]$, $(n,m,|\alpha|)\in[0,k]\times\{0,1\}\times \{0,1,2\}$, and $\delta\in(0,1)$. Then there exists a positive constant $N=N(|\alpha|,d,\delta,\varepsilon,\gamma,\kappa,M,m,n)$ such that for all $t>s>0$,
        \begin{equation}
							\label{2023012301}								
\left\||\cdot|^{n}\p_t^mD_x^{\alpha}\Delta_j \psi(t,-i\nabla) p(t,s,\cdot)\right\|_{L_p(\bR^d)}\leq N\mathrm{e}^{-\kappa|t-s|2^{j\gamma}\times\frac{(1-\delta)}{2^{\gamma}}}2^{j\left((m+1)\gamma+|\alpha|-n+d/p'\right)}.
        \end{equation}

        \item 
        Let $p\in[1,2]$ and $(m,|\alpha|)\in\times\{0,1\}\times \{0,1,2\}$, and $\delta\in(0,1)$.
        Then there exists a positive constant $N=N(|\alpha|,d,\delta,\varepsilon,\gamma,\kappa,M,m)$ such that for all $t>s>0$,
        \begin{equation}
							\label{2023012302}
            \left\|\p_t^mD_x^{\alpha}\Delta_j\psi(t,-i\nabla)p(t,s,\cdot)\right\|_{L_p(\bR^d)}\leq N\mathrm{e}^{-\kappa|t-s|2^{j\gamma}\times\frac{(1-\delta)}{2^{\gamma}}}2^{j\left((m+1)\gamma+|\alpha|+d/p'\right)}.
        \end{equation}
    \end{enumerate}
\end{thm}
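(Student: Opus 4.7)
The plan is to express each kernel as an oscillatory integral via Fourier inversion, rescale by $\xi = 2^j \eta$ so the integration is confined to the fixed dyadic annulus $\{1/2 \leq |\eta| \leq 2\}$, then combine the ellipticity-induced exponential decay with integration by parts in $\xi$ to produce weighted pointwise bounds, which we then convert to $L_p$ bounds. For part (i), I would start from
\begin{equation*}
\partial_t^m D_x^\alpha \Delta_j P_\varepsilon(t,s,x) = \frac{1}{(2\pi)^{d/2}}\int_{\bR^d} e^{ix\cdot\xi}(i\xi)^\alpha|\xi|^{\varepsilon\gamma}\cF[\Psi](2^{-j}\xi)\,\partial_t^m\exp\left(\int_s^t\psi(r,\xi)\,dr\right)d\xi,
\end{equation*}
noting that the support of $\cF[\Psi](2^{-j}\cdot)$ confines $\xi$ to $\{2^{j-1}\leq|\xi|\leq 2^{j+1}\}$. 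The substitution $\xi = 2^j\eta$ produces the scaling factors $2^{j(d+\varepsilon\gamma+|\alpha|+m\gamma)}$, while the ellipticity condition gives $|\exp(\int_s^t\psi(r,\xi)\,dr)|\leq e^{-\kappa(t-s)|\xi|^\gamma}\leq e^{-\kappa(t-s)2^{j\gamma}/2^\gamma}$ on this annulus. The $(1-\delta)$ slack in the claim is reserved to absorb polynomial pre-factors introduced in the next step.

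The heart of the argument is the weighted pointwise estimate. Using $x^\beta e^{ix\cdot\xi} = (-i)^{|\beta|}D_\xi^\beta e^{ix\cdot\xi}$ and integrating by parts $|\beta| = n$ times, the derivative $D_\xi^\beta$ is distributed via Leibniz onto $\cF[\Psi](2^{-j}\xi)$, onto the amplitude $|\xi|^{\varepsilon\gamma}(i\xi)^\alpha$, and onto $\exp(\int_s^t\psi\,dr)$. The last factor is the most delicate: by Fa\`a di Bruno, its $\beta$-th derivative is a polynomial in factors of the form $(t-s)D_\xi^{\beta_i}\psi$, which the $k$-times regular upper bound ($|D_\xi^{\beta_i}\psi|\leq M|\xi|^{\gamma-|\beta_i|}$ for $|\beta_i|\leq k$, valid since we assume $n\leq k$) controls. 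Collecting terms yields polynomial expressions in $\lambda := \kappa(t-s)|\xi|^\gamma$, absorbed via the elementary inequality $\lambda^\ell e^{-\lambda}\leq N_{\ell,\delta}\,e^{-(1-\delta)\lambda}$. This produces
\begin{equation*}
\bigl| \,|x|^n\partial_t^m D_x^\alpha\Delta_j P_\varepsilon(t,s,x)\bigr|\leq N\cdot 2^{j(d+(m+\varepsilon)\gamma+|\alpha|-n)}e^{-\kappa(t-s)2^{j\gamma}(1-\delta)/2^\gamma},
\end{equation*}
which is \eqref{22.01.27.13.46} with $p=\infty$. Part (iii) follows identically after replacing $|\xi|^{\varepsilon\gamma}$ by $\psi(t,\xi)$, since the regular upper bound also controls this new factor with the same scaling $|\xi|^\gamma$.

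For the $L_p$-conversion: when $p\in[2,\infty]$ (parts (i) and (iii)), I would apply H\"older against a weight $(1+|2^j\cdot|)^{-n_0}\in L_p(\bR^d)$ for some $n_0>d/p$; this produces a $2^{-jd/p}$ factor that combines with the pointwise bound to yield the claimed $2^{jd/p'}$ scaling. Since $d/p\leq d/2<k$, the required weight order is permitted by the weighted pointwise bound of Step 2. For $p\in[1,2]$ (parts (ii) and (iv)), the case $p=2$ follows directly from Plancherel applied to the Fourier symbol, and the case $p=1$ from Cauchy-Schwarz against $(1+|2^j\cdot|)^{-n_0}\in L_2(\bR^d)$ with $n_0\in(d/2,k]$, the weighted $L_2$-factor being controlled by Plancherel together with the bounds on $D_\xi^{n_0}$ of the symbol; interpolation fills in $p\in(1,2)$. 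The main obstacle is the careful Fa\`a di Bruno bookkeeping in Step 2: one must ensure that each time a derivative falls on the exponential, the resulting factor $(t-s)$ is paired with a corresponding $|\xi|^\gamma$ to form $\lambda = \kappa(t-s)|\xi|^\gamma$ before invoking the $\lambda^\ell e^{-\lambda}$ trick, which is exactly what justifies the slack parameter $\delta\in(0,1)$ appearing in the final estimate.
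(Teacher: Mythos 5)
Your Step 2 (the weighted pointwise bound via Fourier inversion, integration by parts in $\xi$, Leibniz/Fa\`a di Bruno, the regular upper bound, and the $\lambda^\ell e^{-\lambda}\leq N_\delta e^{-(1-\delta)\lambda}$ absorption) is exactly the content of the paper's Lemma \ref{lem:kernel esti} (which invokes \cite[Lemma 4.1]{Choi_Kim2022} for the derivatives of the exponential), and your reduction of (iii)--(iv) to (i)--(ii) via $|\psi(t,\xi)|\lesssim|\xi|^\gamma$, as well as your treatment of (ii) ($p=2$ by Plancherel, $p=1$ by splitting/Cauchy--Schwarz against a decaying weight of order $\lfloor d/2\rfloor+1\leq k$, interpolation for $1<p<2$), match the paper. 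However, your $L_p$-conversion for parts (i) and (iii) with $p\in[2,\infty)$ has a genuine gap. H\"older against $(1+|2^j\cdot|)^{-n_0}\in L_p$ with $n_0>d/p$ requires a \emph{pointwise} bound on $(1+2^j|x|)^{n_0}|x|^{n}|\p_t^mD_x^\alpha\Delta_jP_\varepsilon(t,s,x)|$, i.e.\ weighted sup-bounds of total order up to $n+n_0$. Since the integration-by-parts argument consumes one $\xi$-derivative of the symbol per unit of weight and the regular upper bound is only assumed for $|\beta|\leq k$, this is unavailable whenever $n>k-n_0$; your justification ``$d/p\leq d/2<k$'' only covers $n=0$. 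The endpoint is not academic: the later application (Lemma \ref{20.12.17.20.21}) uses \eqref{22.01.27.13.46} with $p=p_0'$ and $n=d(p_0)+1=k$, exactly where your scheme breaks down.

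The paper avoids this by proving, alongside the sup-bound, a weighted $L_2$ bound $\bigl(\int|x|^{2n}|\p_t^mD_x^\alpha\Delta_jP_\varepsilon|^2dx\bigr)^{1/2}\lesssim e^{-\kappa(t-s)2^{j\gamma}(1-\delta)/2^\gamma}2^{j((m+\varepsilon)\gamma+|\alpha|-n+d/2)}$ via Plancherel (this costs only $n\leq k$ derivatives, with no extra $n_0$), and then obtaining $2<p<\infty$ by writing $|x|^{pn}|g|^p=\bigl(|x|^{2n}|g|^2\bigr)\bigl(|x|^{(p-2)n}|g|^{p-2}\bigr)$ and using the sup-bound on the second factor; so the derivative budget never exceeds $k$. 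You should replace your H\"older-against-an-integrable-weight step by this $L_2$--$L_\infty$ interpolation (or prove the weighted $L_2$ bound and interpolate some other way). A second, smaller omission: part (i) allows non-integer $n\in[0,k]$, while integration by parts only yields integer weights; the paper fills this in by the elementary interpolation $|x|^{qn}=\bigl(|x|^{q\lfloor n\rfloor}\bigr)^{1-(n-\lfloor n\rfloor)}\bigl(|x|^{q(\lfloor n\rfloor+1)}\bigr)^{n-\lfloor n\rfloor}$ together with H\"older, which your write-up should also include.
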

    
\begin{proof} 
The proofs of \eqref{2023012301} and \eqref{2023012302} are very similar to those of \eqref{22.01.27.13.46} and \eqref{22.01.27.13.58} with $\varepsilon =1$ due to \eqref{condi:reg ubound}, \textit{i.e.}
$$
|\psi(t,\xi)| \lesssim |\xi|^\gamma.
$$
Thus, we only focus on proving \eqref{22.01.27.13.46} and \eqref{22.01.27.13.58}.
    The proofs highly rely on the following lemma, whose proof is given in the last part of this subsection.
    \begin{lem}\label{lem:kernel esti}
        Let $k\in \bN$, $\alpha$ be a ($d$-dimensional) multi-index and $m\in\{0,1\}$.
        Assume that $\psi(t,\xi)$ satisfies the ellipticity condition with $(\gamma,\kappa)$ and  has the $k$-times regular upper bound with $(\gamma,M)$.
        Then there exists a  positive constant $N=N(|\alpha|,d,\delta,\varepsilon,\gamma,\kappa,M,n)$ such that for all $n\in\{0,1,\cdots,k\}$, $t>s>0$, $j\in\bZ$, and $\delta\in(0,1)$,
        \begin{equation*}
            \begin{gathered}
                \sup_{x\in\bR^d}|x|^{n} |\p_t^mD^{\alpha}_x\Delta_jP_{\varepsilon}(t,s,x)|\leq N\mathrm{e}^{-\kappa(t-s)2^{j\gamma}\times\frac{(1-\delta)}{2^{\gamma}}}2^{j((m+\varepsilon)\gamma+|\alpha|-n+d)}\\
                \left(\int_{\bR^d}|x|^{2n}|\p_t^mD^{\alpha}_x\Delta_jP_{\varepsilon}(t,s,x)|^2\mathrm{d}x\right)^{1/2}\leq N\mathrm{e}^{-\kappa(t-s)2^{j\gamma}\times\frac{(1-\delta)}{2^{\gamma}}}2^{j\left((m+\varepsilon)\gamma+|\alpha|-n+d/2\right)}.
            \end{gathered}
        \end{equation*}
    \end{lem}
    We temporarily assume that Lemma \ref{lem:kernel esti} holds to complete the proof of Theorem \ref{22.02.15.11.27}.
        We prove \eqref{22.01.27.13.46} first.  We divide the proof into two cases: the integer case and the non-integer case.
    \begin{enumerate}
        \item[Case 1.]
        Assume $n$ is an integer, \textit{i.e.} $n=0,1,2,\cdots,k$.
        If $p=2$ or $p=\infty$, then \eqref{22.01.27.13.46} directly holds due to Lemma \ref{lem:kernel esti}. 
        For $p\in(2,\infty)$, we apply Lemma \ref{lem:kernel esti} again and obtain
        \begin{align*}
            &\int_{\bR^d}|x|^{pn}|\p_t^mD_x^{\alpha}\Delta_jP_{\varepsilon}(t,s,x)|^{p}\mathrm{d}x\\
            =&\int_{\bR^d}|x|^{2n}|\p_t^mD_x^{\alpha}\Delta_jP_{\varepsilon}(t,s,x)|^{2}|x|^{(p-2)n}|\p_t^mD_x^{\alpha}\Delta_jP_{\varepsilon}(t,s,x)|^{p-2}\mathrm{d}x\\
            \leq& N\left(\mathrm{e}^{-\kappa(t-s)2^{j\gamma}\times\frac{(1-\delta)}{2^{\gamma}}}2^{j((m+\varepsilon)\gamma+|\alpha|-n+d)}\right)^{p-2}\int_{\bR^d}|x|^{2n}|\p_t^mD_x^{\alpha}K_{\varepsilon}(t,s,x)|^{2}\mathrm{d}x\\
            \leq& N\left(\mathrm{e}^{-\kappa(t-s)2^{j\gamma}\times\frac{(1-\delta)}{2^{\gamma}}}2^{j((m+\varepsilon)\gamma+|\alpha|-n+d)}\right)^{p-2}\\
            &\times\left(\mathrm{e}^{-\kappa(t-s)2^{j\gamma}\times\frac{(1-\delta)}{2^{\gamma}}}2^{j\left((m+\varepsilon)\gamma+|\alpha|-n+d/2\right)}\right)^{2}\\
            =&N\left(\mathrm{e}^{-\kappa(t-s)2^{j\gamma}\times\frac{(1-\delta)}{2^{\gamma}}}2^{j\left((m+\varepsilon)\gamma+|\alpha|-n+d/p'\right)}\right)^{p}.
        \end{align*}
        
        \item[Case 2.] 
        Assume $n$ is a non-integer, \textit{i.e.} $n\in[0,k]\setminus\{0,1,2,\cdots,k\}$.
        Observe that for any $q\in[1,\infty)$,
        \begin{equation}\label{22.01.27.13.59}
            \begin{aligned}
                &|x|^{qn}|\p_t^mD^{\alpha}_x\Delta_jP_{\varepsilon}(t,s,x)|^q\\
                =&\left(|x|^{q\lfloor n\rfloor}|\p_t^mD^{\alpha}_x\Delta_jP_{\varepsilon}(t,s,x)|^q\right)^{1-(n-\lfloor n\rfloor)}\\
                &\times\left(|x|^{q(\lfloor n\rfloor+1)}|\p_t^mD^{\alpha}_x\Delta_jP_{\varepsilon}(t,s,x)|^q\right)^{n-\lfloor n\rfloor}.
            \end{aligned}
        \end{equation}
        We use the result of Case 1 with \eqref{22.01.27.13.59} repeatedly, \textit{i.e.} we use \eqref{22.01.27.13.46} with $\lfloor n \rfloor$ and $\lfloor n \rfloor+1$ after applying \eqref{22.01.27.13.59} for the proof of this case. 
        Using \eqref{22.01.27.13.59} with $q=1$ and the result of Case 1, \eqref{22.01.27.13.46}  holds if $p=\infty$. 
        For other $p$, \textit{i.e.} $p \in [2,\infty)$, we use \eqref{22.01.27.13.59} with $q=p$ and apply H\"older's inequality with $\frac{1}{n-\lfloor n \rfloor}$. 
        Then, finally, due to the result of Case 1, we have
        \begin{align*}
            &\left(\int_{\bR^d}|x|^{pn}|\p_t^mD^{\alpha}_x\Delta_jP_{\varepsilon}(t,s,x)|^p\mathrm{d}x\right)^{1/p}\\
            \leq& \left(\int_{\bR^d}|x|^{p(\lfloor n\rfloor+1)}|\p_t^mD^{\alpha}_x\Delta_jP_{\varepsilon}(t,s,x)|^p\mathrm{d}x\right)^{(n-\lfloor n\rfloor)/p}\\
            &\times\left(\int_{\bR^d}|x|^{p\lfloor n\rfloor}|\p_t^mD^{\alpha}_x\Delta_jP_{\varepsilon}(t,s,x)|^p\mathrm{d}x\right)^{(1-(n-\lfloor n\rfloor))/p}\\
            \leq& N\mathrm{e}^{-\kappa(t-s)2^{j\gamma}\times\frac{(1-\delta)}{2^{\gamma}}}2^{j\left((m+\varepsilon)\gamma+|\alpha|-n+d/p'\right)}.
        \end{align*}
    \end{enumerate}

    Next, we prove \eqref{22.01.27.13.58}.   The case $p=2$  holds due to \eqref{22.01.27.13.46} with $n=0$. 
    Moreover, we claim that it is sufficient to show that \eqref{22.01.27.13.58} holds for $p=1$. 
    Indeed, for $p \in (1,2)$ there exists a $\lambda \in (0,1)$ such that $p=\lambda+2(1-\lambda)$ and
    \begin{equation}\label{ineq:22 02 28 13 27}
        \begin{aligned}
            |\p_t^mD^{\alpha}_x\Delta_jP_{\varepsilon}(t,s,x)|^p=|\p_t^mD^{\alpha}_x\Delta_jP_{\varepsilon}(t,s,x)|^{\lambda}\times|\p_t^mD^{\alpha}_x\Delta_jP_{\varepsilon}(t,s,x)|^{2(1-\lambda)}.
        \end{aligned}
    \end{equation}
Applying H\"older's inequality with \eqref{ineq:22 02 28 13 27} and $\frac{1}{\lambda}$, we obtain \eqref{22.01.27.13.58}.
    Thus, we focus on showing \eqref{22.01.27.13.58} with $p=1$. 
   Let $j \in \bZ$. We consider $|x| \leq 2^{-j}$ and $|x|>2^{-j}$, separately.
    \begin{align*}
        &\int_{\bR^d}|\p_t^mD^{\alpha}_x\Delta_jP_{\varepsilon}(t,s,x)|\mathrm{d}x\\
        &=\int_{|x|\leq 2^{-j}}|\p_t^mD^{\alpha}_x\Delta_jP_{\varepsilon}(t,s,x)|\mathrm{d}x + \int_{|x|> 2^{-j}}|\p_t^mD^{\alpha}_x\Delta_jP_{\varepsilon}(t,s,x)|\mathrm{d}x.
    \end{align*}
    For $|x| \leq 2^{-j}$ we make use of $(i)$ with $p=\infty$ and $n=0$. Then
    \begin{align*}
        \int_{|x|\leq 2^{-j}}|\p_t^mD^{\alpha}_x\Delta_jP_{\varepsilon}(t,s,x)|\mathrm{d}x&\leq N\mathrm{e}^{-\kappa(t-s)2^{j\gamma}\times\frac{(1-\delta)}{2^{\gamma}}}2^{j((m+\varepsilon)\gamma+|\alpha|)}.
    \end{align*}
    For $|x|>2^{-j}$ we put $d_2:=\lfloor d/2\rfloor+1$ and note that $d_2\leq k$.
    Then by H\"older's inequality and $(i)$ with $p=2$,
    \begin{align*}
       &\int_{|x|> 2^{-j}}|\p_t^mD^{\alpha}_x\Delta_jP_{\varepsilon}(t,s,x)|\mathrm{d}x\\
       &\leq \left(\int_{|x|> 2^{-j}}|x|^{-2d_2}\right)^{1/2} \left(\int_{|x|> 2^{-j}}|x|^{2d_2}|\p_t^mD^{\alpha}_x\Delta_jP_{\varepsilon}(t,s,x)|^2\mathrm{d}x\right)^{1/2}\\
       &\leq N2^{j(d_2-d/2)}\mathrm{e}^{-\kappa(t-s)2^{j\gamma}\times\frac{(1-\delta)}{2^{\gamma}}}2^{j\left((m+\varepsilon)\gamma+|\alpha|-d_2+d/2\right)}\\
       &= N\mathrm{e}^{-\kappa(t-s)2^{j\gamma}\times\frac{(1-\delta)}{2^{\gamma}}}2^{j\left((m+\varepsilon)\gamma+|\alpha|\right)}.
    \end{align*}
    The theorem is proved.
\end{proof}

\begin{corollary}\label{22.02.15.14.36}
    Let $k$ be an integer such that $k>\lfloor d/2\rfloor$. Assume that $\psi(t,\xi)$ satisfies the ellipticity condition with $(\gamma,\kappa)$ and has the $k$-times regular upper bound with $(\gamma,M)$. Then, for all $p\in[1,\infty]$ and $(m,|\alpha|)\in\{0,1\}\times\{0,1,2\}$,
    there exist positive constants $N$ and $N'$ such that for all $t>s>0$
    \begin{align}
									\label{20230130 01}
        \left\|\p_t^mD_x^{\alpha}S_0P_{\varepsilon}(t,s,\cdot)\right\|_{L_p(\bR^d)}\leq N,
    \end{align}
and
    $$
        \left\|\p_t^mD_x^{\alpha}S_0 \psi(t,-i\nabla)p(t,s,\cdot)\right\|_{L_p(\bR^d)}\leq N',
    $$
    where $N=N(|\alpha|,d,\varepsilon,\gamma,\kappa,M,m,p)$ and $N'=N'(|\alpha|,d,\gamma,\kappa,M,m,p)$.
\end{corollary}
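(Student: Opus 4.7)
The plan is to expand $S_0 = \sum_{j \leq 0} \Delta_j$, apply the triangle inequality, and appeal to the bounds in Theorem \ref{22.02.15.11.27} termwise. For the first estimate, combining part (i) with $n = 0$ when $p \in [2,\infty]$ and part (ii) when $p \in [1,2]$ gives a unified summand bound, so that
\begin{equation*}
\bigl\|\partial_t^m D_x^\alpha S_0 P_\varepsilon(t,s,\cdot)\bigr\|_{L_p(\bR^d)} \;\leq\; N\sum_{j\leq 0} e^{-\kappa(t-s)2^{j\gamma}(1-\delta)/2^{\gamma}}\, 2^{j((m+\varepsilon)\gamma+|\alpha|+d/p')}.
\end{equation*}
The exponential factor is dominated by $1$ for every $j\leq 0$ and every $t>s$, reducing the right-hand side to a convergent geometric series whenever the exponent $(m+\varepsilon)\gamma+|\alpha|+d/p'$ is strictly positive. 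This covers every configuration of parameters with the single exception $p=1$, $m=|\alpha|=\varepsilon=0$.

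The second estimate proceeds identically, but with parts (iii) and (iv) of Theorem \ref{22.02.15.11.27} in place of (i) and (ii). These parts supply an additional $2^{j\gamma}$ factor coming from the symbol $\psi$, so the dyadic exponent becomes $(m+1)\gamma+|\alpha|+d/p' \geq \gamma > 0$ for every admissible $(m,\alpha,p)$, and uniform summability follows without restriction. Choosing a sufficiently small $\delta$ plays no role here since we only need $e^{-\kappa(t-s)2^{j\gamma}(1-\delta)/2^{\gamma}}\leq 1$.

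The main obstacle is the corner case $p=1$, $m=|\alpha|=\varepsilon=0$ of the first estimate, where the dyadic exponent degenerates to zero and naive summation diverges. Here I would bypass the Littlewood--Paley decomposition and instead exploit the Fourier representation
\begin{equation*}
S_0 p(t,s,x) = (2\pi)^{-d/2}\int_{\bR^d} e^{ix\cdot\xi}\cF[\Psi_0](\xi)\exp\!\left(\int_s^t \psi(r,\xi)\,dr\right)d\xi,
\end{equation*}
whose integrand is supported in $\{|\xi|\leq 2\}$ and dominated in modulus by $|\cF[\Psi_0](\xi)|$ thanks to ellipticity. Using the regular upper bound $|D^\beta_\xi \psi|\leq M|\xi|^{\gamma-|\beta|}$, Fa\`a di Bruno's formula for $\xi$-derivatives of $\exp(\int_s^t \psi\,dr)$, and the elementary inequality $[(t-s)|\xi|^\gamma]^{N}e^{-\kappa(t-s)|\xi|^\gamma}\leq C(N,\kappa)$ to absorb the factors of $(t-s)$, one controls enough $\xi$-derivatives of the integrand uniformly in $(t,s)$; integration by parts in $\xi$ then yields polynomial decay of $S_0 p(t,s,x)$ in $x$ sufficient for the $L_1$-bound. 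The delicate point is matching the order of integration by parts against the polynomial singularity $|\xi|^{-|\beta|}$ at the origin (which is effective when $\gamma<1$), a balance that crucially uses the hypothesis $k > \lfloor d/2\rfloor$ coming from Theorem \ref{22.02.15.11.27}.
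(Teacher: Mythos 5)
Your main argument is exactly the paper's proof: write $S_0=\sum_{j\le 0}\Delta_j$, apply Minkowski's inequality, invoke Theorem \ref{22.02.15.11.27} termwise, discard the exponential factor, and sum the geometric series $\sum_{j\le 0}2^{j((m+\varepsilon)\gamma+|\alpha|+d/p')}$, which converges precisely when the exponent is positive. You are also right that this leaves out the single degenerate configuration $p=1$, $m=|\alpha|=0$, $\varepsilon=0$ (the paper's own proof silently restricts to the positive-exponent case), and that the second estimate is safe because the extra factor $|\psi(t,\xi)|\lesssim|\xi|^{\gamma}$ makes the exponent at least $\gamma$.

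However, your patch for the corner case does not close as described, and this is a genuine gap. Once you use $[(t-s)|\xi|^{\gamma}]^{N}e^{-\kappa(t-s)|\xi|^{\gamma}}\le C$ to ``absorb the factors of $(t-s)$'', the only uniform bound left on the integrand (via Lemma \ref{lem:multiplier esti}) is $|D^{\beta}_{\xi}\bigl(\cF[\Psi_0](\xi)\exp(\int_s^t\psi\,dr)\bigr)|\lesssim|\xi|^{-|\beta|}$ on $\{|\xi|\le2\}$. To get an $L_1(\bR^d)$ bound by integration by parts you need pointwise decay strictly faster than $|x|^{-d}$, hence more than $d$ integrations by parts, but $|\xi|^{-|\beta|}$ is integrable over $\{|\xi|\le 2\}$ only for $|\beta|<d$ (and square-integrable, for the Plancherel--Cauchy--Schwarz variant used in the paper, only for $|\beta|<d/2$); so the ``balance'' you allude to cannot be struck, and the hypothesis $k>\lfloor d/2\rfloor$ does not rescue it. What does work is to retain exactly the smallness that your inequality throws away: write the multiplier as $1+\bigl(\exp(\int_s^t\psi\,dr)-1\bigr)$, observe that $\cF^{-1}[\cF[\Psi_0]]\in L_1(\bR^d)$, and use
\begin{equation*}
\Bigl|D^{\beta}_{\xi}\Bigl(\exp\Bigl(\int_s^t\psi(r,\xi)\,dr\Bigr)-1\Bigr)\Bigr|\lesssim |\xi|^{-|\beta|}\min\bigl(1,(t-s)|\xi|^{\gamma}\bigr)e^{-c(t-s)|\xi|^{\gamma}},
\end{equation*}
so that the dyadic pieces of the remainder carry a factor $\min(1,(t-s)2^{j\gamma})e^{-c(t-s)2^{j\gamma}}$ whose sum over $j\le0$ is bounded uniformly in $t-s$; equivalently, one can decompose at the natural frequency scale $(t-s)^{-1/\gamma}$. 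Without such an extra factor (or an added hypothesis excluding the degenerate exponent), the corner case of \eqref{20230130 01} is not established by your argument.
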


\begin{proof}
Due to similarity, we only prove \eqref{20230130 01}.
    Since $S_0:=\sum_{j\leq 0}\Delta_j$, we make use of Minkowski's inequality and Theorem \ref{22.02.15.11.27} to obtain
    \begin{align*}
        \left\|\p_t^mD_x^{\alpha}S_0P_{\varepsilon}(t,s,\cdot)\right\|_{L_p(\bR^d)}&\leq \sum_{j\leq 0}\left\|\p_t^mD_x^{\alpha}\Delta_jP_{\varepsilon}(t,s,\cdot)\right\|_{L_p(\bR^d)}\leq N\sum_{j\leq 0}2^{j((m+\varepsilon)\gamma+|\alpha|+d/p')}.
    \end{align*}
 Note that the summation is finite if $(m+\varepsilon)\gamma+|\alpha|+d/p'>0$.
 The corollary is proved.
\end{proof}

Now we prove Lemma \ref{lem:kernel esti}.
\begin{proof}[Proof of Lemma \ref{lem:kernel esti}]
    We apply some elementary properties of the Fourier inverse transform to obtain an upper bound of $|x|^n|\partial_t^mD^{\alpha}_x\Delta_jP_{\varepsilon}(t,s,x)|$.
    Indeed, recalling \eqref{def fund} and \eqref{def kernel}, we have
    \begin{equation}\label{ineq:22 02 28 14 29}
        \begin{aligned}
            &|x|^n|\partial_t^mD^{\alpha}_x\Delta_jP_{\varepsilon}(t,s,x)|\\
            \leq& N(d,n)\sum_{i=1}^d|x^i|^n|\partial_t^mD^{\alpha}_x\Delta_jP_{\varepsilon}(t,s,x)|\\
            \leq& N(d,n)\sum_{|\beta|=n}\int_{\bR^d}\left|D^{\beta}_{\xi}\left(\xi^{\alpha}|\xi|^{\varepsilon\gamma}\cF[\Psi](2^{-j}\xi)\partial_t^m\exp\left(\int_s^t\psi(r,\xi)\mathrm{d}r\right)\right)\right|\mathrm{d}\xi.
        \end{aligned}
    \end{equation}
    For the integrand, we make use of Leibniz's product rule. Then
    \begin{equation}\label{ineq:22 02 28 14 30}
        \begin{aligned}
            &D_{\xi}^{\beta}\left(\xi^{\alpha}|\xi|^{\varepsilon\gamma}\cF[\Psi](2^{-j}\xi)\p_t^m\exp\left(\int_{s}^t\psi(r,\xi)\mathrm{d}r\right)\right)\\
            =&D_{\xi}^{\beta}\left(\psi(t,\xi)^m\xi^{\alpha}|\xi|^{\varepsilon\gamma}\cF[\Psi](2^{-j}\xi)\exp\left(\int_{s}^t\psi(r,\xi)\mathrm{d}r\right)\right)\\
            =&\sum_{\beta=\beta_0+\beta_1}c_{\beta_0,\beta_1}D^{\beta_0}_{\xi}\left(\psi(t,\xi)^m\xi^{\alpha}|\xi|^{\varepsilon\gamma}\exp\left(\int_{s}^t\psi(r,\xi)\mathrm{d}r\right)\right)2^{-j|\beta_1|}D^{\beta_1}_{\xi}\cF[\Psi](2^{-j}\xi).
        \end{aligned}
    \end{equation}
    To estimate $D^{\beta_0}_{\xi}\left(\psi(t,\xi)^m\xi^{\alpha}|\xi|^{\varepsilon\gamma}\exp\left(\int_{s}^t\psi(r,\xi)\mathrm{d}r\right)\right)$, we borrow the lemma in \cite{Choi_Kim2022} and introduce it below.
    \begin{lem}[\cite{Choi_Kim2022} Lemma 4.1]\label{lem:multiplier esti}
        Let $n\in\bN$ and assume that $\psi(t, \xi)$ satisfies the ellipticity condition with $(\gamma, \kappa)$ and has the $n$-times regular upper bound with $(\gamma, M)$.
        Then there exists a positive constant $N = N(M,n)$ such that for all $t>s>0$ and $\xi\in\bR^d \setminus \{0\}$,
            \begin{align*}
                \bigg| D_\xi^n\biggl(\exp\bigg(\int_s^t \psi(r, \xi)\mathrm{d}r\bigg)\biggr) \bigg|
                \leq N |\xi|^{-n} \exp(-\kappa(t-s)|\xi|^\gamma) \sum_{k=1}^n |t-s|^k |\xi|^{k\gamma}.
            \end{align*}
    \end{lem}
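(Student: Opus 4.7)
The plan is to apply the Fa\`a di Bruno formula to the composition $\exp\circ F$ with $F(\xi) := \int_s^t \psi(r,\xi)\,dr$, which reduces the estimate of $D_\xi^n$ of an exponential to estimates on $|\exp(F(\xi))|$ together with products of lower-order partial derivatives of $F$. The exponential factor is controlled immediately: since $|\exp(z)| = \exp(\frR[z])$, the ellipticity condition $\frR[-\psi(r,\xi)] \geq \kappa|\xi|^\gamma$ yields
\[
|\exp(F(\xi))| \leq \exp\bigl(-\kappa(t-s)|\xi|^\gamma\bigr),
\]
which supplies exactly the exponential decay factor appearing in the target inequality.

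Next, for any multi-index $\beta$ with $1 \leq |\beta| \leq n$, differentiating under the integral sign and invoking the $n$-times regular upper bound gives
\[
|D_\xi^\beta F(\xi)| \leq \int_s^t |D_\xi^\beta \psi(r,\xi)|\,dr \leq M(t-s)|\xi|^{\gamma-|\beta|}.
\]
Then, for a multi-index $\alpha$ with $|\alpha| = n$, the multi-index Fa\`a di Bruno formula expresses $D_\xi^\alpha(\exp F)$ as $\exp(F)$ times a finite linear combination of products $\prod_{i=1}^j D_\xi^{\beta_i} F$ with $\beta_1 + \cdots + \beta_j = \alpha$, $|\beta_i| \geq 1$, and $j \in \{1,\ldots,n\}$. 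Bounding each factor via the preceding two displays, a $j$-term product is controlled by $M^j (t-s)^j |\xi|^{j\gamma - n}$. Summing over the admissible range $j \in \{1,\ldots,n\}$ and absorbing combinatorial constants into $N = N(M,n)$ yields the asserted inequality; in particular, no $j=0$ term arises, which is why the sum on the right-hand side of the statement begins at $k=1$.

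The main obstacle is purely combinatorial: writing the multi-index Fa\`a di Bruno formula accurately and checking that the admissible decompositions contribute precisely the powers $|\xi|^{j\gamma - n}$ and $(t-s)^j$ with $j \leq n$ appearing on the right-hand side. The analytic content is entirely captured by the exponential decay from ellipticity and the factorwise $|\xi|^{\gamma-|\beta|}$ bound from the regular upper bound hypothesis, both of which enter in the cleanest possible way.
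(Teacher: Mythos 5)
Your argument is correct: the ellipticity bound gives $|\exp(F(\xi))|\leq \exp(-\kappa(t-s)|\xi|^\gamma)$, the regular upper bound gives $|D^\beta_\xi F(\xi)|\leq M(t-s)|\xi|^{\gamma-|\beta|}$ after differentiating under the $r$-integral, and the Fa\`a di Bruno (or inductive) expansion of $D^\alpha_\xi e^{F}$ into $e^{F}$ times products $\prod_i D^{\beta_i}_\xi F$ with $\beta_1+\cdots+\beta_j=\alpha$, $j\geq 1$, yields exactly $N|\xi|^{-n}e^{-\kappa(t-s)|\xi|^\gamma}\sum_{k=1}^n(t-s)^k|\xi|^{k\gamma}$, with the combinatorial constant depending only on $n$ (and $M$). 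Note that the present paper does not prove this lemma at all but imports it from \cite{Choi_Kim2022} (Lemma 4.1); your proof is the standard argument used there, so there is nothing to add beyond the routine justification of differentiating under the integral sign, which your bounds already supply.
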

We keep going to prove Lemma \ref{lem:kernel esti}.
By Lemma \ref{lem:multiplier esti}, it follows that
    \begin{equation}\label{ineq:22 02 28 15 13}
        \begin{aligned}
            &\bigg| D^{\beta_0}_{\xi}\left(\psi(t,\xi)^m\xi^{\alpha}|\xi|^{\varepsilon\gamma}\exp\left(\int_{s}^t\psi(r,\xi)\mathrm{d}r\right)\right)\bigg| \\
            \leq&  \sum_{\gamma_1 + \gamma_2 + \gamma_3 = \beta_0} c_{\gamma_1,\gamma_2,\gamma_3}
                \big|D_\xi^{\gamma_1} \big(\psi(t,\xi)^m\big) \big|  \big|D_\xi^{\gamma_2} \big(\xi^\alpha |\xi|^{\varepsilon\gamma}\big) \big|
                \bigg|D_\xi^{\gamma_3} \bigg(\exp\biggl(\int_s^t \psi(r,t)\mathrm{d}r\biggr)\bigg) \bigg|\\
            \leq& N(|\beta_0|,M) |\xi|^{m\gamma + \varepsilon\gamma +|\alpha| - |\beta_0|} \exp(-\kappa(t-s)|\xi|^{\gamma}) \sum_{k=1}^n(t-s)^k |\xi|^{k\gamma}.
        \end{aligned}
    \end{equation}
    By \eqref{ineq:22 02 28 14 30} and \eqref{ineq:22 02 28 15 13}, 
    \begin{equation}\label{21.08.31.13.49}
        \begin{aligned}
            &\left|D_{\xi}^{\beta}\left(\xi^{\alpha}|\xi|^{\varepsilon\gamma}\cF[\Psi](2^{-j}\xi)\p_t^m\exp\left(\int_{s}^t\psi(r,\xi)\mathrm{d}r\right)\right)\right|\\
            \leq& N\mathrm{e}^{-\kappa(t-s)|\xi|^{\gamma}}\sum_{k=1}^{n}(t-s)^k|\xi|^{k\gamma}\sum_{\beta=\beta_0+\beta_1}|\xi|^{(m+\varepsilon)\gamma+|\alpha|-|\beta_0|}2^{-j|\beta_1|}|D^{\beta_1}_{\xi}\cF[\Psi](2^{-j}\xi)|\\
            \leq& N|\xi|^{(m+\varepsilon)\gamma+|\alpha|-|\beta|}\mathrm{e}^{-\kappa(t-s)|\xi|^{\gamma}}\left(\sum_{k=1}^{n}(t-s)^k|\xi|^{k\gamma}\right)1_{2^{j-1}\leq|\xi|\leq2^{j+1}}\\
            \leq& N|\xi|^{(m+\varepsilon)\gamma+|\alpha|-|\beta|}\mathrm{e}^{-\kappa(1-\delta)(t-s)|\xi|^{\gamma}}1_{2^{j-1}\leq|\xi|\leq2^{j+1}}\\
            =&N|\xi|^{(m+\varepsilon)\gamma+|\alpha|-n}\mathrm{e}^{-\kappa(1-\delta)(t-s)|\xi|^{\gamma}}1_{2^{j-1}\leq|\xi|\leq2^{j+1}},
        \end{aligned}
    \end{equation}
    where $N=N(\delta,M,n)$.
Moreover,  one can check that
    \begin{equation}
    \label{ineq:22 02 28 14 31}
    \begin{aligned}								
            &\int_{2^{j-1}\leq |\xi|\leq 2^{j+1}}|\xi|^{(m+\varepsilon)\gamma+|\alpha|-n}\mathrm{e}^{-\kappa(1-\delta)(t-s)|\xi|^{\gamma}}\mathrm{d}\xi\\	
            =&N(d)\int_{2^{j-1}}^{2^{j+1}}l^{(m+\varepsilon)\gamma+|\alpha|-n+d-1}\mathrm{e}^{-\kappa(1-\delta)(t-s)l^{\gamma}}\mathrm{d}l\\		
            \leq& N\mathrm{e}^{-\kappa(t-s)2^{j\gamma}\times\frac{(1-\delta)}{2^{\gamma}}}\int_{2^{j-1}}^{2^{j+1}}l^{(m+\varepsilon)\gamma+|\alpha|-n+d-1}\mathrm{d}l\\
			=&N(|\alpha|,d,\varepsilon,m,n)\mathrm{e}^{-\kappa(t-s)2^{j\gamma}\times\frac{(1-\delta)}{2^{\gamma}}}2^{j((m+\varepsilon)\gamma+|\alpha|-n+d)}.
    \end{aligned}
    \end{equation}
All together with \eqref{ineq:22 02 28 14 29}, \eqref{21.08.31.13.49}, and \eqref{ineq:22 02 28 14 31}, we have
    \begin{align*}
        |x|^n|\partial_t^mD^{\alpha}_x\Delta_jP_{\varepsilon}(t,s,x)|
        \leq 
        N\mathrm{e}^{-\kappa(t-s)2^{j\gamma}\times\frac{(1-\delta)}{2^{\gamma}}}2^{j((m+\varepsilon)\gamma+|\alpha|-n+d}.
    \end{align*}
    Similarly, by \eqref{21.08.31.13.49} and Plancherel's theorem,
    \begin{align*}
        &\int_{\bR^d}|x|^{2n}|\p_t^mD_x^{\alpha} \Delta_jP_{\varepsilon}(t,s,x)|^2\mathrm{d}x\leq N\int_{\bR^d}|x^i|^{2n}|\psi(t,-i\nabla)^mD_x^{\alpha}\Delta_j P_{\varepsilon}(t,s,x)|^2\mathrm{d}x\\
        \leq& N\sum_{|\beta|=n}\int_{\bR^d}\left|D^{\beta}_{\xi}\left(\psi(t,\xi)^m\xi^{\alpha}|\xi|^{\varepsilon\gamma}\cF[\Psi](2^{-j}\xi)\exp\left(\int_{s}^t\psi(r,\xi)\mathrm{d}r\right)\right)\right|^2\mathrm{d}\xi\\
        \leq& \int_{2^{j-1}\leq|\xi|\leq2^{j+1}}|\xi|^{2(m+\varepsilon)\gamma+2|\alpha|-2n}\mathrm{e}^{-2\kappa(1-\delta)(t-s)|\xi|^{\gamma}}\mathrm{d}\xi\\
        \leq& N\mathrm{e}^{-2\kappa(t-s)2^{j\gamma}\times\frac{(1-\delta)}{2^{\gamma}}}2^{j(2(m+\varepsilon)\gamma+2|\alpha|-2n+d)},
    \end{align*}
    where $N=N(|\alpha|,d,\delta,\varepsilon,m,n)$. 
    The lemma is proved.
\end{proof}

\subsection{Proof of Proposition \ref{prop:maximal esti}}

Recall
$$
\cT_{t,0}^{\varepsilon,j} f(x):=\int_{\bR^d}\Delta_jP_{\varepsilon}(t,0,x-y)f(y)\mathrm{d}y,\quad \cT_{t,0}^{\varepsilon,\leq0}f(x):=\int_{\bR^d}S_0P_{\varepsilon}(t,0,x-y)f(y)\mathrm{d}y,
$$
and
\begin{equation*}
    \begin{gathered}
        \psi(t,-i \nabla)\cT_{t,0}^{j} f(x):=\int_{\bR^d}\Delta_jP_{\varepsilon}(t,0,x-y)f(y)\mathrm{d}y,\\
        \psi(t,-i \nabla)\cT_{t,0}^{\leq0}f(x):=\int_{\bR^d}S_0P_{\varepsilon}(t,0,x-y)f(y)\mathrm{d}y.
    \end{gathered}
\end{equation*}

In this subsection, we start estimating mean oscillations of $\cT_{t,0}^{\varepsilon, j}f$ and $\cT_{t,0}^{\varepsilon,\leq0}f$.
\begin{lem}\label{20.12.21.16.26}
    Let $t>0$, $j\in\bZ$, $\delta \in (0,1)$, $p_0\in(1,2]$, $b>0$, and $f\in \cS(\bR^d)$.
    Suppose that $\psi$ is a symbol satisfying the ellipticity condition with $(\gamma,\kappa)$ and having the $\left( \left\lfloor \frac{d}{p_0}\right\rfloor+2\right)$-times regular upper bound with $(\gamma,M)$.
    \begin{enumerate}[(i)]
        \item 
        Then for any $x\in B_{2^{-j}b}(0)$,
        \begin{equation*}
            \begin{aligned}
     &\aint_{B_{2^{-j}b}(0)}\aint_{B_{2^{-j}b}(0)}|\cT_{t,0}^{\varepsilon,j}f(y_0)-\cT_{t,0}^{\varepsilon,j} f(y_1)|^{p_0}\mathrm{d}y_0\mathrm{d}y_1\leq N2^{j\varepsilon\gamma p_0}\mathrm{e}^{-\kappa t 2^{j\gamma}\times\frac{p_0(1-\delta)}{2^{\gamma}}} \bM\left(|f|^{p_0}\right)(x),
     \end{aligned}
        \end{equation*}
        and
        \begin{equation*}
            \begin{aligned}
     &\aint_{B_{2^{-j}b}(0)}\aint_{B_{2^{-j}b}(0)}| \psi(t,-i\nabla)\cT_{t,0}^{j}f(y_0)-\psi(t,-i\nabla)\cT_{t,0}^{j} f(y_1)|^{p_0}\mathrm{d}y_0\mathrm{d}y_1
     \\
     &\leq N'2^{j\gamma p_0}\mathrm{e}^{-\kappa t 2^{j\gamma}\times\frac{p_0(1-\delta)}{2^{\gamma}}} \bM\left(|f|^{p_0}\right)(x),
            \end{aligned}
        \end{equation*}
        where $N=N(d, \delta,\varepsilon,\gamma,\kappa,M,m,p_0)$ and $N'=N'(d,\delta, \gamma,\kappa,M,m,p_0)$.
        \item 
        Then for any $x\in B_{b}(0)$,
        \begin{equation*}
            \begin{gathered}
         \aint_{B_{b}(0)}\aint_{B_{b}(0)}|\cT_{t,0}^{\varepsilon,\leq0} f(y_0)-\cT_{t,0}^{\varepsilon,\leq0} f(y_1)|^{p_0}\mathrm{d}y_0\mathrm{d}y_1\leq N\bM\left(|f|^{p_0}\right)(x),\\
            \aint_{B_{b}(0)}\aint_{B_{b}(0)}|\psi(t,-i\nabla)\cT_{t,0}^{\leq0} f(y_0)-\psi(t,-i\nabla)\cT_{t,0}^{\leq0} f(y_1)|^{p_0}\mathrm{d}y_0\mathrm{d}y_1\leq N'\bM\left(|f|^{p_0}\right)(x), 
            \end{gathered}
        \end{equation*}
        where $N=N(d,\varepsilon,\gamma,\kappa,M,m,p_0)$ and $N'=N'(d,\gamma,\kappa,M,m,p_0)$
    \end{enumerate}
\end{lem}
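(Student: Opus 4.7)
The plan is to reduce the mean-oscillation integrals to convolution-type estimates based on Theorem \ref{22.02.15.11.27} for part (i), and Corollary \ref{22.02.15.14.36} for part (ii). Fix a center $x \in B_{2^{-j}b}(0)$ and split $f = f_{\mathrm{in}} + f_{\mathrm{out}}$ with $f_{\mathrm{in}} := f \cdot \mathbf{1}_{B_R(x)}$, where $R := (2b+1)2^{-j}$; the choice of radius guarantees $B_{2^{-j}b}(0) \subset B_{R/2}(x)$, so that $|y - z| \gtrsim |z-x|$ uniformly for $y \in B_{2^{-j}b}(0)$ and $z \notin B_{R/2}(x)$.

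For the inner piece, use $|a-c|^{p_0} \leq 2^{p_0}(|a|^{p_0}+|c|^{p_0})$ and Fubini to reduce the double average to $\aint_{B_{2^{-j}b}(0)}|\cT_{t,0}^{\varepsilon,j}f_{\mathrm{in}}|^{p_0}\,dy$. Young's convolution inequality combined with Theorem \ref{22.02.15.11.27}(ii) applied with $p=1$ and $m=|\alpha|=0$ yields
\[
\|\cT_{t,0}^{\varepsilon,j}f_{\mathrm{in}}\|_{L_{p_0}(\bR^d)} \lesssim 2^{j\varepsilon\gamma}e^{-\kappa t 2^{j\gamma}(1-\delta)/2^\gamma}\|f_{\mathrm{in}}\|_{L_{p_0}(\bR^d)}.
\]
Since $\|f_{\mathrm{in}}\|_{L_{p_0}(\bR^d)}^{p_0} \leq |B_R(x)|\,\bM(|f|^{p_0})(x)$ and the ratio $|B_R(x)|/|B_{2^{-j}b}(0)|$ depends only on $b$ and $d$, this matches the claimed bound.

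For the outer piece, the mean value theorem yields
\[
|\cT_{t,0}^{\varepsilon,j}f_{\mathrm{out}}(y_0) - \cT_{t,0}^{\varepsilon,j}f_{\mathrm{out}}(y_1)| \leq |y_0-y_1|\int_{|z-x| \geq R/2}\sup_{y \in B_{2^{-j}b}(0)}|\nabla_y \Delta_j P_\varepsilon(t,0,y-z)||f(z)|\,dz.
\]
Dyadically decompose the integration region as $\bigcup_{k \geq 0}A_k$ with $A_k := \{2^{k-1}R \leq |z-x| < 2^{k}R\}$, and on each annulus apply Hölder with exponents $p_0'$ on the kernel and $p_0$ on $f$. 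Since $p_0' \in [2,\infty)$, Theorem \ref{22.02.15.11.27}(i) with $|\alpha|=1$, $m=0$, $n := \lfloor d/p_0\rfloor + 2$, together with $|z-x| \gtrsim 2^kR$ on $A_k$, gives
\[
\|\nabla_y \Delta_j P_\varepsilon(t,0,\cdot)\|_{L_{p_0'}(A_k)} \lesssim (2^kR)^{-n}e^{-\kappa t 2^{j\gamma}(1-\delta)/2^\gamma}2^{j(\varepsilon\gamma + 1 - n + d/p_0)},
\]
which when combined with $\|f\|_{L_{p_0}(A_k)} \lesssim (2^kR)^{d/p_0}\bM(|f|^{p_0})(x)^{1/p_0}$ produces an $A_k$-contribution that decays like $(2^k)^{d/p_0-n}$. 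The geometric sum converges because $n > d/p_0$, the factor $R^{d/p_0 - n} \sim 2^{j(n-d/p_0)}$ collapses the $j$-dependence, and $|y_0-y_1|\lesssim 2^{-j}$ absorbs the remaining $2^j$, leaving exactly $2^{j\varepsilon\gamma}\bM(|f|^{p_0})(x)^{1/p_0}$.

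The second inequality of (i) is handled identically after swapping Theorem \ref{22.02.15.11.27}(i)(ii) with (iii)(iv) and the factor $2^{j\varepsilon\gamma}$ with $2^{j\gamma}$, and part (ii) follows from the same split-and-sum structure with Corollary \ref{22.02.15.14.36} in place of Theorem \ref{22.02.15.11.27}; here the localization radius is $R = 2b+1$ independent of $j$ (since $S_0 P_\varepsilon$ lives on unit frequency scale) and no $2^{j\varepsilon\gamma}$ factor appears. The main technical obstacle is the bookkeeping that confirms $n = \lfloor d/p_0\rfloor + 2$ simultaneously meets the constraint $n \leq k$ for $k$ the assumed regularity of $\psi$ and ensures $n > d/p_0$ for dyadic summability; both are secured by the exact regularity assumption in the statement.
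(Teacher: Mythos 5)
Your overall architecture (split $f$ into a piece supported near the ball and a far piece, handle the near piece by Young's inequality with the $L_1$ kernel bound of Theorem \ref{22.02.15.11.27}, handle the far piece by a mean-value-type bound plus weighted kernel decay and a radial/dyadic summation) is the same as the paper's, which implements it via a cutoff $\zeta$, Lemma \ref{22.01.28.17.18} for the near part and Lemma \ref{20.12.17.20.21} (polar coordinates instead of your annuli) for the far part. The problem is the bookkeeping in $b$, and it is not cosmetic: the constants in the lemma must be independent of $b$, because in the proof of Proposition \ref{prop:maximal esti} the supremum is taken over balls $B_{2^{-j}b}$ with arbitrary $b>0$. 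Your proposal breaks this in two places. First, for the near part you localize on $B_R(x)$ with $R=(2b+1)2^{-j}$ and invoke the volume ratio $|B_R(x)|/|B_{2^{-j}b}(0)|=((2b+1)/b)^d$, explicitly allowing the constant to "depend on $b$"; this ratio blows up as $b\to0$, whereas the paper's cutoff is taken at radius comparable to $2^{-j}b$ (support in $B_{5\cdot2^{-j}b/2}(0)$), which keeps the ratio equal to a dimensional constant. (Also, with your $R$ the claimed inclusion $B_{2^{-j}b}(0)\subset B_{R/2}(x)$ and the bound $|y-z|\gtrsim|z-x|$ fail already for $b\ge 1/2$, since $|y-x|$ can be as large as $2b2^{-j}$.)

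Second, in the far part you write $|y_0-y_1|\lesssim 2^{-j}$, but $y_0,y_1\in B_{2^{-j}b}(0)$ only gives $|y_0-y_1|\lesssim b\,2^{-j}$, and you simultaneously discard the $b$-dependence in $R^{d/p_0-n}$. If you track the $b$'s, the constraints are incompatible with a single uniform argument: the separation $|y-z|\gtrsim|z-x|$ forces $R\gtrsim b2^{-j}$, the near-part volume ratio forces $R\lesssim b2^{-j}$, and then the MVT route produces the factor $b^{1+d/p_0-n}$, which is bounded for all $b>0$ only if $n=1+d/p_0$ exactly -- impossible for non-integer $d/p_0$ with an integer weight exponent. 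This is precisely why the paper's proof makes the case distinction: for $b\le1$ it uses the difference (mean-value) estimate with the stronger kernel decay (exponent $d(p_0)+1$, i.e.\ $|\alpha|=2$ in \eqref{22.10.24.11.37}), yielding the harmless factor $b^{\,d-p_0\lfloor d/p_0\rfloor}\le1$, while for $b>1$ it abandons the difference structure and bounds $|\cT^{\varepsilon,j}_{t,0}f(y)|$ directly, yielding $b^{\,d-p_0 d(p_0)}\le1$. Your closing claim that "$n=\lfloor d/p_0\rfloor+2$ simultaneously" secures summability and the regularity constraint misses this tension; either reinstate the case analysis in $b$ (or replace the near-part Young estimate by a H\"older estimate with the $L_{p_0'}$ kernel bound for small $b$), or the constant you produce genuinely depends on $b$ and the lemma, as needed for the sharp-function estimate, is not proved.
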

Once we assume Lemma \ref{20.12.21.16.26}, then Proposition \ref{prop:maximal esti}  follows. 
\begin{proof}[Proof of Proposition \ref{prop:maximal esti}]
    Due to similarity, we only prove it for $\cT_{t,0}^{\varepsilon,j}$. Let $b>0$, $t>0$, and $x\in B_{2^{-j}b}(0)$. By Lemma \ref{20.12.21.16.26},
    \begin{align*}
        &\aint_{B_{2^{-j}b}(0)}\aint_{B_{2^{-j}b}(0)}|\cT_{t,0}^{\varepsilon,j} f(y_0)-\cT_{t,0}^{\varepsilon,j} f(
        y_1)|^{p_0}\mathrm{d}y_0\mathrm{d}y_1 \leq N2^{j\varepsilon\gamma p_0}\mathrm{e}^{-\kappa t 2^{j\gamma}\times\frac{p_0(1-\delta)}{2^{\gamma}}}\bM\left(|f|^{p_0}\right)(x).
    \end{align*}
    For $x_0\in\bR^{d}$, denote 
    $$
        \tau_{x_0}f(t,x):=f(t,x_0+x).
    $$
    Since $\cT_{t,0}^{\varepsilon,j}$ and $\tau_{x_0}$ are commutative,
    \begin{align*}
        &\aint_{B_{2^{-j}b}(x_0)}\aint_{B_{2^{-j}b}(x_0)}|\cT_{t,0}^{\varepsilon,j} f(y_0)-\cT_{t,0}^{\varepsilon,j} f(y_1)|^{p_0}\mathrm{d}y_0\mathrm{d}y_1\\
        =&\aint_{B_{2^{-j}b}(0)}\aint_{B_{2^{-j}b}(0)}|\cT_{t,0}^{\varepsilon,j} (\tau_{x_0}f)(t,y_0)-\cT_{t,0}^{\varepsilon,j} (\tau_{x_0}f)(t,y_1)|^{p_0}\mathrm{d}y_0\mathrm{d}y_1\\
        \leq& N2^{j\varepsilon\gamma p_0}\mathrm{e}^{-\kappa t 2^{j\gamma}\times\frac{p_0(1-\delta)}{2^{\gamma}}}\bM\left(|\tau_{x_0}f|^{p_0}\right)(x)=N2^{j\varepsilon\gamma p_0}\mathrm{e}^{-\kappa t 2^{j\gamma}\times\frac{p_0(1-\delta)}{2^{\gamma}}}\bM\left(|f|^{p_0}\right)(x_0+x).
    \end{align*}
    Therefore, by Jensen's inequality, for $x\in B_{2^{-j}b}(0)$ and $x_0\in\bR^{d}$
    \begin{align*}
        &\left(\aint_{B_{2^{-j}b}(x_0)}\aint_{B_{2^{-j}b}(x_0)}|\cT_{t,0}^{\varepsilon,j}f(y_0)-\cT_{t,0}^{\varepsilon,j} f(y_1)|\mathrm{d}y_0\mathrm{d}y_1\right)^{p_0}\\
        \leq& \aint_{B_{2^{-j}b}(x_0)}\aint_{B_{2^{-j}b}(x_0)}|\cT_{t,0}^{\varepsilon,j}f(y_0)-\cT_{t,0}^{\varepsilon,j} f(y_1)|^{p_0}\mathrm{d}y_0\mathrm{d}y_1\\
        \leq& N2^{j\varepsilon\gamma
        p_0}\mathrm{e}^{-\kappa t 2^{j\gamma}\times\frac{p_0(1-\delta)}{2^{\gamma}}}\bM\left(|f|^{p_0}\right)(x_0+x).
    \end{align*}
    Taking the supremum both sides with respect to all $B_{2^{-j}b}$ containing $x_0+x$, we obtain the desired result. The theorem is proved.
\end{proof}
Therefore, it suffices to prove Lemma \ref{20.12.21.16.26} to complete the proof of Proposition \ref{prop:maximal esti}.
In doing so, we begin with two lemmas that reduce our computational effort.
For readers' convenience, we also present the following scheme, which explains relations among Lemmas \ref{20.12.17.20.21}, \ref{22.02.15.16.27}, \ref{22.01.28.16.57}, \ref{22.01.28.17.18}, \ref{20.12.21.16.26}, and Proposition \ref{prop:maximal esti}.
\begin{equation*}
    \begin{rcases}
        \text{Lemma \ref{22.01.28.16.57}} \to &\text{Lemma \ref{22.01.28.17.18}} \\
        &\text{Lemma \ref{20.12.17.20.21}} \\
        &\text{Lemma \ref{22.02.15.16.27}} 
    \end{rcases}
    \to \text{Lemma \ref{20.12.21.16.26}} \to \text{Proposition \ref{prop:maximal esti}},
\end{equation*}
where $A\to B$ implies that $A$ is used in the proof of $B$. 
Note that Lemmas \ref{20.12.17.20.21}, \ref{22.02.15.16.27}, and \ref{22.01.28.16.57} are simple consequences of Theorem \ref{22.02.15.11.27} and Corollary \ref{22.02.15.14.36}. 

\begin{lem}\label{20.12.17.20.21}
    Let $p_0\in(1,2]$ and $f\in \cS(\bR^d)$.
    Suppose that $\psi$ is a symbol satisfying the ellipticity condition with $(\gamma,\kappa)$ and having the $\left( \left\lfloor \frac{d}{p_0}\right\rfloor+2\right)$-times regular upper bound with $(\gamma,M)$.
    Then for all $t>s>0$ and  $(m,|\alpha|)\in\{0,1\}\times\{0,1,2\}$, there exists a positive constant $N=N(|\alpha|,d,\delta,\varepsilon,\gamma,\kappa,M,m,p_0)$ such that
    for all $a_0,a_1\in(0,\infty)$,
    \begin{equation*}
        \begin{aligned}
            &\int_{a_1}^{\infty}\left(\int_{B_{a_0+\lambda}(x)}|f(z)|^{p_0}\mathrm{d}z\right)^{1/p_0}\left(\lambda^d \int_{\bS^{d-1}}|\p_t^mD_x^{\alpha}\Delta_j P_{\varepsilon}(t,s,\lambda \omega)|^{p_0'}\sigma(\mathrm{d} \omega) \right)^{1/p_0'}\mathrm{d}\lambda\\
            \leq& N\mathrm{e}^{-\kappa|t-s|2^{j\gamma}\times\frac{(1-\delta)}{2^{\gamma}}}2^{j\left((m+\varepsilon)\gamma+|\alpha|-d(p_0)-1+d/p_0\right)}\\
            &\quad\quad\times\left(\int_{a_1}^{\infty}\lambda^{-p_0d(p_0)-1}\int_{B_{a_0+\lambda}(x)}|f(z)|^{p_0}\mathrm{d}z\mathrm{d}\lambda \right)^{1/p_0},
        \end{aligned}
    \end{equation*}
    where $\bS^{d-1}$ denotes the $d-1$-dimensional unit sphere, $\sigma(\mathrm{d}\omega)$ denotes the surface measure on $\bS^{d-1}$, 
    $$
        d(p_0) :=\left\lfloor\frac{d}{p_0}\right\rfloor+1,
    $$
    and $p_0'$ is the H\"older conjugate of $p_0$, \textit{i.e.} $1/p_0+1/p_0'=1$.
\end{lem}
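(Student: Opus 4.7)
The plan is a direct Hölder plus kernel-estimate argument on the radial variable $\lambda \in (a_1,\infty)$. Let me denote $A(\lambda) := \left(\int_{B_{a_0+\lambda}(x)}|f(z)|^{p_0}dz\right)^{1/p_0}$ and $B(\lambda) := \left(\lambda^d \int_{\bS^{d-1}}|\p_t^m D_x^{\alpha}\Delta_j P_{\varepsilon}(t,s,\lambda\omega)|^{p_0'}\sigma(d\omega)\right)^{1/p_0'}$, so that the LHS is $\int_{a_1}^{\infty} A(\lambda)B(\lambda)\,d\lambda$. The goal is to split off a $\lambda$-power that exactly matches the weight $\lambda^{-p_0 d(p_0)-1}$ on the RHS, and to recognize the surviving factor as a weighted $L_{p_0'}$-norm of the kernel over $\bR^d$.

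First I would choose the weight $\phi(\lambda) := \lambda^{-d(p_0)-1/p_0}$ and apply Hölder's inequality with exponents $(p_0,p_0')$:
\begin{align*}
\int_{a_1}^{\infty} A(\lambda)B(\lambda)\,d\lambda
&= \int_{a_1}^{\infty} \bigl(A(\lambda)\phi(\lambda)\bigr)\bigl(B(\lambda)/\phi(\lambda)\bigr)\,d\lambda \\
&\leq \Bigl(\int_{a_1}^{\infty} A(\lambda)^{p_0}\phi(\lambda)^{p_0}\,d\lambda\Bigr)^{1/p_0} \Bigl(\int_{a_1}^{\infty} B(\lambda)^{p_0'}\phi(\lambda)^{-p_0'}\,d\lambda\Bigr)^{1/p_0'}.
\end{align*}
The first factor is exactly $\bigl(\int_{a_1}^{\infty}\lambda^{-p_0 d(p_0)-1}\int_{B_{a_0+\lambda}(x)}|f(z)|^{p_0}dz\,d\lambda\bigr)^{1/p_0}$, which is the right-hand side of the lemma. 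For the second factor, using $p_0'/p_0 = p_0'-1$, one computes $\phi^{-p_0'}(\lambda)=\lambda^{d(p_0)p_0'+p_0'-1}$, so the integrand equals $\lambda^{d(p_0)p_0'+p_0'-1+d}\int_{\bS^{d-1}}|\p_t^m D_x^{\alpha}\Delta_j P_{\varepsilon}(t,s,\lambda\omega)|^{p_0'}\sigma(d\omega)$.

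Next I would convert back to Cartesian coordinates on $\bR^d$: since $dx=\lambda^{d-1}d\lambda\,\sigma(d\omega)$, the second factor becomes
\begin{align*}
\Bigl(\int_{|x|>a_1}|x|^{(d(p_0)+1)p_0'}\,\bigl|\p_t^m D_x^{\alpha}\Delta_j P_{\varepsilon}(t,s,x)\bigr|^{p_0'}dx\Bigr)^{1/p_0'} \leq \bigl\||\cdot|^{d(p_0)+1}\p_t^m D_x^{\alpha}\Delta_j P_{\varepsilon}(t,s,\cdot)\bigr\|_{L_{p_0'}(\bR^d)}.
\end{align*}
Since $p_0 \in (1,2]$ gives $p_0' \in [2,\infty)$ with $(p_0')' = p_0$, and since $d(p_0)+1 = \lfloor d/p_0\rfloor+2$ is exactly the regularity index $k$ assumed in the regular upper bound, Theorem \ref{22.02.15.11.27}(i) applies with $n=d(p_0)+1$, $p=p_0'$, yielding the bound $N e^{-\kappa|t-s|2^{j\gamma}(1-\delta)/2^{\gamma}}\,2^{j((m+\varepsilon)\gamma+|\alpha|-d(p_0)-1+d/p_0)}$. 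Multiplying this with the first factor concludes the proof.

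There is no real obstacle here; the only thing to verify carefully is the bookkeeping that the weight $\phi$ produces exactly the exponent $-p_0 d(p_0)-1$ in the first factor while leaving a radial exponent in the second factor that, after restoration of the $\lambda^{d-1}$ Jacobian, equals $(d(p_0)+1)p_0'$ — and that this matches the maximal admissible index $k$ in Theorem \ref{22.02.15.11.27}(i). The ellipticity condition enters only through that theorem (giving the exponential time decay), and the regularity assumption $\lfloor d/p_0\rfloor+2$ is used at precisely the last step.
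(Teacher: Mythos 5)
Your proof is correct and follows essentially the same route as the paper: the paper's proof also applies H\"older's inequality in $\lambda$ with the weight $\lambda^{-\mu}$, $\mu=d(p_0)+\tfrac1{p_0}$ (your $\phi$), converts the second factor to the weighted norm $\||\cdot|^{d(p_0)+1}\p_t^mD_x^{\alpha}\Delta_jP_{\varepsilon}(t,s,\cdot)\|_{L_{p_0'}(\bR^d)}$ via polar coordinates, and invokes Theorem \ref{22.02.15.11.27}(i) with $n=d(p_0)+1$ and $p=p_0'$. The bookkeeping in your version matches the paper's exactly.
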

\begin{proof} 
    For notational convenience, we define
    \begin{equation*}
        \begin{gathered}
            \mu:=d(p_0)+\frac{1}{p_0}>\frac{d+1}{p_0}
        \end{gathered}
    \end{equation*}
    and
    $$
        K_{\varepsilon,m,\alpha,j}^{p_0'}(t,s,\lambda):=\int_{\bS^{d-1}}|\p_t^mD_x^{\alpha} \Delta_jP_{\varepsilon}(t,s,\lambda w)|^{p_0'}\sigma(\mathrm{d}\omega).
    $$
    By H\"older's inequality and Theorem \ref{22.02.15.11.27},
    \begin{align*}
        &\int_{a_1}^{\infty}\left(\int_{B_{a_0+\lambda}(x)}|f(z)|^{p_0}\mathrm{d}z\right)^{1/p_0}(\lambda^dK_{\varepsilon,m,\alpha,j}^{p_0'}(t,s,\lambda))^{1/p_0'}\mathrm{d}\lambda\\
        \leq&\left(\int_{a_1}^{\infty}\lambda^{-p_0\mu}\int_{B_{a_0+\lambda}(x)}|f(z)|^{p_0}\mathrm{d}z\mathrm{d}\lambda \right)^{1/p_0}\left(\int_{a_1}^{\infty}\lambda^{d+p_0'\mu}K_{\varepsilon,m,\alpha,j}^{p_0'}(t,s,\lambda)\mathrm{d}\lambda \right)^{1/p_0'}\\
        \leq&\left(\int_{a_1}^{\infty}\lambda^{-p_0\mu}\int_{B_{a_0+\lambda}(x)}|f(z)|^{p_0}\mathrm{d}z\mathrm{d}\lambda \right)^{1/p_0}\left(\int_{\bR^d}|z|^{1+p_0'\mu}|\p_t^mD_x^{\alpha}\Delta_jP_{\varepsilon}(t,s,z)|^{p_0'} \mathrm{d}z\right)^{1/p_0'}\\
        &\leq N\mathrm{e}^{-\kappa|t-s|2^{j\gamma}\times\frac{(1-\delta)}{2^{\gamma}}}2^{j\left((m+\varepsilon)\gamma+|\alpha|-d(p_0)-1+d/p_0\right)}\\
        &\quad\quad\times\left(\int_{a_1}^{\infty}\lambda^{-p_0d(p_0)-1}\int_{B_{a_0+\lambda}(x)}|f(z)|^{p_0}\mathrm{d}z\mathrm{d}\lambda \right)^{1/p_0}.
    \end{align*}
    The lemma is proved.
\end{proof}

\begin{lem}\label{22.02.15.16.27}
    Let $p_0\in(1,2]$ and $f\in \cS(\bR^d)$.
    Suppose that $\psi$ is a symbol satisfying the ellipticity condition with $(\gamma,\kappa)$ and having the $\left( \left\lfloor \frac{d}{p_0}\right\rfloor+2\right)$-times regular upper bound with $(\gamma,M)$.
    Then for all $t>s>0$ and  $(m,|\alpha|)\in\{0,1\}\times\{0,1,2\}$ satisfying
    $$
        (m+\varepsilon)\gamma+|\alpha|+d/p_0-\lfloor d/p_0\rfloor>0,
    $$
    there exists a positive constant $N=N(|\alpha|,d,\varepsilon,\gamma,k,\kappa,M,m,p_0)$ such that
    for all $a_0,a_1\in(0,\infty)$,
    \begin{equation*}
        \begin{aligned}
            &\int_{a_1}^{\infty}\left(\int_{B_{a_0+\lambda}(x)}|f(z)|^{p_0}\mathrm{d}z\right)^{1/p_0}\left(\lambda^d \int_{\bS^{d-1}}|\p_t^mD_x^{\alpha}S_0 P_{\varepsilon}(t,s,\lambda \omega)|^{p_0'}\sigma(\mathrm{d}\omega) \right)^{1/p_0'}\mathrm{d}\lambda\\
            \leq& N\left(\int_{a_1}^{\infty}\lambda^{-p_0d(p_0)-1}\int_{B_{a_0+\lambda}(x)}|f(z)|^{p_0}\mathrm{d}z\mathrm{d}\lambda \right)^{1/p_0},
        \end{aligned}
    \end{equation*}
    where
    $$
        d(p_0) :=\left\lfloor\frac{d}{p_0}\right\rfloor+1
    $$
    and $p_0'$ is the H\"older conjugate of $p_0$, \textit{i.e.} $1/p_0+1/p_0'=1$.
\end{lem}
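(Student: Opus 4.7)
The plan is to mirror the template of Lemma~\ref{20.12.17.20.21}, substituting the low-frequency block $S_0 P_\varepsilon = \sum_{j \leq 0} \Delta_j P_\varepsilon$ for the single Littlewood--Paley piece $\Delta_j P_\varepsilon$. The essential new feature is that, unlike in Lemma~\ref{20.12.17.20.21}, no explicit factor $e^{-\kappa |t-s| 2^{j\gamma}/2^\gamma}$ is present to absorb growth in $j$, so the argument must derive a $(t,s)$-uniform bound purely from dyadic summability across $j \leq 0$.

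First I would set $\mu := d(p_0) + 1/p_0$ and
\[
K^{p_0'}_{\varepsilon, m, \alpha}(t, s, \lambda) := \int_{\bS^{d-1}} |\p_t^m D_x^\alpha S_0 P_\varepsilon(t, s, \lambda \omega)|^{p_0'} \sigma(d\omega),
\]
and apply H\"older's inequality in $\lambda$ with conjugate exponents $p_0, p_0'$ and auxiliary weights $\lambda^{\pm \mu}$ to split the target integral as $\mathrm{I}^{1/p_0} \cdot \mathrm{II}^{1/p_0'}$, where
\[
\mathrm{I} := \int_{a_1}^\infty \lambda^{-p_0 \mu} \int_{B_{a_0+\lambda}(x)} |f(z)|^{p_0}\, dz\, d\lambda, \qquad \mathrm{II} := \int_{a_1}^\infty \lambda^{d+p_0'\mu} K^{p_0'}_{\varepsilon, m, \alpha}(t, s, \lambda)\, d\lambda.
\]
Because $p_0 \mu = p_0 d(p_0) + 1$, the factor $\mathrm{I}^{1/p_0}$ coincides with the claimed right-hand side, so the problem reduces to bounding $\mathrm{II}$ by a constant independent of $t, s, a_1$. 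Switching to polar coordinates and noting $(1 + p_0'\mu)/p_0' = d(p_0) + 1$ rewrites $\mathrm{II}$ as $\bigl\||\cdot|^{d(p_0)+1} \p_t^m D_x^\alpha S_0 P_\varepsilon(t, s, \cdot)\bigr\|_{L_{p_0'}(\bR^d)}^{p_0'}$. I would then unfold $S_0 = \sum_{j \leq 0} \Delta_j$, apply Minkowski, discard the benign factor $e^{-\kappa |t-s| 2^{j\gamma}(1-\delta)/2^\gamma} \leq 1$, and invoke Theorem~\ref{22.02.15.11.27}(i) with $n = d(p_0) + 1 \leq \lfloor d/p_0 \rfloor + 2$ and $p = p_0'$ (so that $p' = p_0$), reducing the whole estimate to a single geometric series $\sum_{j \leq 0} 2^{jE}$ whose exponent $E$ is a linear combination of $(m+\varepsilon)\gamma$, $|\alpha|$, $d/p_0$, and $\lfloor d/p_0 \rfloor$.

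The main obstacle will be to arrange that $E > 0$ so the geometric sum over $j \leq 0$ converges absolutely; this is precisely the role of the standing hypothesis $(m+\varepsilon)\gamma + |\alpha| + d/p_0 - \lfloor d/p_0 \rfloor > 0$, which I would then unpack to verify $E > 0$. Once that is secured, the sum evaluates to a constant depending only on the admissible parameters, yielding the desired uniform bound on $\mathrm{II}$ and closing the argument. A secondary sanity check is that the required weight exponent $n = d(p_0) + 1 = \lfloor d/p_0 \rfloor + 2$ is within the regularity range imposed on the symbol $\psi$, so Theorem~\ref{22.02.15.11.27}(i) is legitimately applicable; this follows directly from the hypothesis that $\psi$ has a $(\lfloor d/p_0 \rfloor + 2)$-times regular upper bound.
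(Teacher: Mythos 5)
Your skeleton (H\"older in $\lambda$ with $\mu=d(p_0)+1/p_0$, polar coordinates, reduction to a $(t,s)$-uniform bound on $\mathrm{II}=\bigl\||\cdot|^{d(p_0)+1}\p_t^mD_x^\alpha S_0P_\varepsilon(t,s,\cdot)\bigr\|_{L_{p_0'}(\bR^d)}^{p_0'}$, and Theorem \ref{22.02.15.11.27} applied to each $\Delta_jP_\varepsilon$) is the same as the paper's, but the one step you postpone --- checking that the geometric exponent $E$ is positive --- is exactly where the proposal fails, and it cannot be ``unpacked'' from the hypothesis. Since $1+p_0'\mu=p_0'\,(d(p_0)+1)$, the weight you charge to every low-frequency piece is $|z|^{n}$ with $n=d(p_0)+1=\lfloor d/p_0\rfloor+2$, and Theorem \ref{22.02.15.11.27}(i) with $p=p_0'$ (so that $d/p'=d/p_0$) gives, after discarding the exponential factor, the bound $N2^{jE}$ with
\begin{equation*}
E=(m+\varepsilon)\gamma+|\alpha|-n+\frac{d}{p_0}
=\Bigl[(m+\varepsilon)\gamma+|\alpha|+\frac{d}{p_0}-\Bigl\lfloor\frac{d}{p_0}\Bigr\rfloor\Bigr]-2 .
\end{equation*}
Thus $E$ equals the hypothesis quantity minus $2$, and the standing hypothesis only guarantees $E>-2$. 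The admissible choice $m=\varepsilon=0$, $|\alpha|=1$ already gives $E=d/p_0-\lfloor d/p_0\rfloor-1<0$, so your series $\sum_{j\le0}2^{jE}$ diverges. The defect is not merely a lost constant: with the weight $|z|^{\lfloor d/p_0\rfloor+2}$ the kernel $\p_t^mD_x^\alpha S_0P_\varepsilon(t,s,\cdot)$ in general decays only like $|z|^{-d-(m+\varepsilon)\gamma-|\alpha|}$, so the quantity $\mathrm{II}$ you aim to bound uniformly need not even be finite when the hypothesis quantity is at most $2$.

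This is also where your route departs from the paper's. The paper does not use Minkowski with the full weight on every block: before the H\"older inequality in $\lambda$ it performs a weighted H\"older inequality over $j\le 0$ inside the spherical integral, with geometric weights built from an auxiliary constant $c>1$ chosen so that $0<\log_2 c<(m+\varepsilon)\gamma+|\alpha|+d/p_0-\lfloor d/p_0\rfloor$, and the hypothesis enters the paper's argument only through this choice of $c$; the final series there is summed with $\log_2 c$ appearing in the exponent rather than your $E$. (Even on that route the exponent bookkeeping is delicate --- the same ``$-2$'' produced by the weight $n=d(p_0)+1$ has to be tracked --- so it is a place to verify carefully rather than to quote.) In any case, as written your argument rests on the claim that the hypothesis implies $E>0$, which is arithmetically false, so the proof does not close without a genuinely new ingredient handling the $j\le 0$ sum.
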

\begin{proof} 
    First, we choose a $c=c(|\alpha|,d,\varepsilon,\gamma,m,p_0)>1$ so that
    $$
        (m+\varepsilon)\gamma+|\alpha|+d/p_0-\lfloor d/p_0\rfloor>\log_2(c)>0.
    $$
    By H\"older's inequality,
    \begin{equation}\label{22.02.15.16.06}
        \begin{aligned}
            &\int_{\bS^{d-1}}|\p_t^mD_x^{\alpha} S_0P_{\varepsilon}(t,s,\lambda \omega)|^{p_0'}\sigma(\mathrm{d}\omega)\\
            \leq& \left(\sum_{j\leq 0}c^{j/(p_0-1)}\right)^{p_0-1}\int_{\bS^{d-1}}\sum_{j\leq 0}c^{-j}|\p_t^mD_x^{\alpha} \Delta_jP_{\varepsilon}(t,s,\lambda \omega)|^{p_0'}\sigma(\mathrm{d}\omega)\\
            \leq& N(c,p_0)\sum_{j\leq0}c^{-j}K_{\varepsilon,m,\alpha,j}^{p_0'}(t,s,\lambda).
        \end{aligned}
    \end{equation}
Putting
$$
\mu = d(p_0) +\frac{1}{p_0}
$$
and using \eqref{22.02.15.16.06} and H\"older's inequality, we have
    \begin{align*}
        &\int_{a_1}^{\infty}\left(\int_{B_{a_0+\lambda}(x)}|f(z)|^{p_0}\mathrm{d}z\right)^{1/p_0}\left(\lambda^d \int_{\bS^{d-1}}|\p_t^mD_x^{\alpha}S_0 P_{\varepsilon}(t,s,\lambda \omega)|^{p_0'}\sigma(\mathrm{d}\omega) \right)^{1/p_0'}\mathrm{d}\lambda\\
        \leq& N\int_{a_1}^{\infty}\left(\int_{B_{a_0+\lambda}(x)}|f(z)|^{p_0}\mathrm{d}z\right)^{1/p_0}\left(\lambda^d\sum_{j\leq0}c^{-j}K_{\varepsilon,m,\alpha,j}^{p_0'}(t,s,\lambda)  \right)^{1/p_0'}\mathrm{d}\lambda\\
        \leq &N\left(\int_{a_1}^{\infty}\lambda^{-p_0\mu}\int_{B_{a_0+\lambda}(x)}|f(z)|^{p_0}\mathrm{d}z\mathrm{d}\lambda \right)^{1/p_0}\left(\sum_{j\leq0}c^{-j}\int_{a_1}^{\infty}\lambda^{d+p_0'\mu}K_{\varepsilon,m,\alpha,j}^{p_0'}(t,s,\lambda)\mathrm{d}\lambda \right)^{1/p_0'}.
    \end{align*}
    By Theorem \ref{22.02.15.11.27},
    \begin{align*}
        \sum_{j\leq0}c^{-j}\int_{a_1}^{\infty}\lambda^{d+p_0'\mu}K_{\varepsilon,m,\alpha,j}^{p_0'}(t,s,\lambda)\mathrm{d}\lambda& N\leq \sum_{j\leq0}c^{-j}\int_{\bR^d}|z|^{1+p_0'\mu}|\p_t^mD_x^{\alpha}\Delta_jP_{\varepsilon}(t,s,z)|^{p_0'} \mathrm{d}z\\
        &\leq N \sum_{j\leq0}c^{-j}2^{j\left((m+\varepsilon)\gamma+|\alpha|-d(p_0)-1+d/p_0\right)}\\
        &=N\sum_{j\leq0}2^{j\left((m+\varepsilon)\gamma+|\alpha|+d/p_0-\lfloor d/p_0\rfloor-\log_2(c)\right)}=N.
    \end{align*}
    The lemma is proved.
\end{proof}

Making use of Lemmas \ref{20.12.17.20.21}, \ref{22.02.15.16.27}, we want to estimate mean oscillations of $\cT_{t,0}^{\varepsilon,j} f$ and $\cT_{t,0}^{\varepsilon,\leq0}f$.
To do so, we first calculate $L_p$-norms of $\cT_{t,0}^{\varepsilon,j}f$ and $\cT_{t,0}^{\varepsilon,\leq0}f$ with respect to the space variable $x$.

\begin{lem}\label{22.01.28.16.57}
    Suppose that $\psi$ is a symbol satisfying the ellipticity condition with $(\gamma,\kappa)$ and having the $\left( \left\lfloor \frac{d}{2}\right\rfloor+1\right)$-times regular upper bound with $(\gamma,M)$.
    Then for all $t>0$, $p\in[1,\infty]$, and $\delta \in (0,1)$, there exist positive constants $N=N(d,\varepsilon,\gamma,\kappa,M)$, $N(\delta)=N(d,\delta, \varepsilon,\gamma,\kappa,M)$, $N'(\delta)=N'(d,\delta,\gamma,\kappa,M)$, and $N'=N'(d,\gamma,\kappa,M)$ such that for all $f \in \cS(\bR^{d})$ and $ t>0$,
    \begin{equation*}
        \begin{gathered}
         \|\cT_{t,0}^{\varepsilon,j}f\|_{L_p(\bR^{d})}\leq N(\delta)\mathrm{e}^{-\kappa t 2^{j\gamma}\times\frac{(1-\delta)}{2^{\gamma}}}2^{j\varepsilon\gamma}\|f\|_{L_p(\bR^{d})}, \\ \|\cT_{t,0}^{\varepsilon,\leq0}f\|_{L_p(\bR^{d})}\leq N\|f\|_{L_p(\bR^{d})},\\
        \| \psi(t, -i\nabla)\cT_{t,0}^{j}f\|_{L_p(\bR^d)}\leq N'(\delta)\mathrm{e}^{-\kappa t 2^{j\gamma}\times\frac{(1-\delta)}{2^{\gamma}}}2^{j\gamma}\|f\|_{L_p(\bR^{d})},\\
        \|\psi(t, -i\nabla) \cT_{t,0}^{\leq0}f\|_{L_p(\bR^d)}\leq N'\|f\|_{L_p(\bR^{d})}.
        \end{gathered}
    \end{equation*}
\end{lem}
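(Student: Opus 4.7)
The plan is to recognize each of the four operators as a convolution operator and invoke Young's convolution inequality, which reduces everything to estimating the $L_1$-norms of the corresponding kernels. Since
$$
\cT_{t,0}^{\varepsilon,j} f = \Delta_j P_{\varepsilon}(t,0,\cdot) \ast f, \qquad \cT_{t,0}^{\varepsilon,\leq 0} f = S_0 P_{\varepsilon}(t,0,\cdot) \ast f,
$$
and analogously for the two $\psi(t,-i\nabla)$-operators, Young's inequality gives the uniform bound
$$
\|K \ast f\|_{L_p(\bR^d)} \leq \|K\|_{L_1(\bR^d)} \|f\|_{L_p(\bR^d)}
$$
for every $p \in [1,\infty]$, so that the entire lemma follows once the four kernels are controlled in $L_1$.

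For the Littlewood-Paley pieces $\Delta_j P_{\varepsilon}(t,0,\cdot)$ and $\Delta_j \psi(t,-i\nabla)p(t,0,\cdot)$, I would apply Theorem \ref{22.02.15.11.27} with $p=1$ (so $p'=\infty$ and the term $d/p'$ in the exponent vanishes) and $(m,|\alpha|)=(0,0)$. Part $(ii)$ yields
$$
\|\Delta_j P_{\varepsilon}(t,0,\cdot)\|_{L_1(\bR^d)} \leq N(\delta)\, e^{-\kappa t 2^{j\gamma}(1-\delta)/2^\gamma}\, 2^{j\varepsilon\gamma},
$$
and part $(iv)$, which the authors note is proved by essentially taking $\varepsilon=1$ due to the regularity assumption $|\psi(t,\xi)|\lesssim |\xi|^{\gamma}$, yields
$$
\|\Delta_j \psi(t,-i\nabla) p(t,0,\cdot)\|_{L_1(\bR^d)} \leq N'(\delta)\, e^{-\kappa t 2^{j\gamma}(1-\delta)/2^\gamma}\, 2^{j\gamma}.
$$
Combining these with Young's inequality gives the desired estimates for $\cT_{t,0}^{\varepsilon,j}f$ and $\psi(t,-i\nabla)\cT_{t,0}^{j}f$.

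For the low-frequency pieces, I would apply Corollary \ref{22.02.15.14.36} with $p=1$ and $(m,|\alpha|)=(0,0)$, which directly provides
$$
\|S_0 P_{\varepsilon}(t,0,\cdot)\|_{L_1(\bR^d)} \leq N, \qquad \|S_0 \psi(t,-i\nabla)p(t,0,\cdot)\|_{L_1(\bR^d)} \leq N',
$$
and Young's inequality closes the argument for the remaining two operators. I do not expect a genuine obstacle here: the lemma is a packaging statement whose entire content has already been established in Theorem \ref{22.02.15.11.27} and Corollary \ref{22.02.15.14.36}, so the proof amounts to collecting the relevant kernel bounds and applying Young's convolution inequality four times. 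The only minor delicacy is tracking the dependence of the constants on $\delta$ through the exponential factor $e^{-\kappa t 2^{j\gamma}(1-\delta)/2^\gamma}$ in the high-frequency estimates, which is simply inherited from Theorem \ref{22.02.15.11.27}.
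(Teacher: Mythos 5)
Your proposal is correct and is essentially the paper's own argument: the paper likewise bounds each operator by the $L_1$-norm of its kernel times $\|f\|_{L_p}$ (phrased via Minkowski's inequality, which here is the same as your use of Young's inequality) and then quotes Theorem \ref{22.02.15.11.27} for the $\Delta_j$-pieces and Corollary \ref{22.02.15.14.36} for the $S_0$-pieces. Nothing further is needed.
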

\begin{proof}
Let $ t > 0$.    By Minkowski's inequality and Theorem \ref{22.02.15.11.27},
    \begin{equation*}
    \begin{aligned}
     \|\cT_{t,0}^{\varepsilon,j}f\|_{L_p(\bR^{d})}&\leq \|\Delta_{j}P_{\varepsilon}(t,0,\cdot)\|_{L_1(\bR^d)}\|f\|_{L_p(\bR^d)}\\
     &\leq N\mathrm{e}^{-\kappa t2^{j\gamma}\times\frac{(1-\delta)}{2^{\gamma}}}2^{j\varepsilon\gamma}\|f\|_{L_p(\bR^d)},
         \end{aligned}
    \end{equation*}
    and
    \begin{equation*}
    \begin{aligned}
     \|\psi(t, -i\nabla)\cT_{t,0}^{j}f\|_{L_p(\bR^{d})}&\leq \|\psi(t, -i\nabla) \Delta_{j} p(t,0,\cdot)\|_{L_1(\bR^d)}\|f\|_{L_p(\bR^d)}\\
     &\leq N\mathrm{e}^{-\kappa t2^{j\gamma}\times\frac{(1-\delta)}{2^{\gamma}}}2^{j\gamma}\|f\|_{L_p(\bR^d)}.
    \end{aligned}
    \end{equation*}
    Similarly, using Minkowski's inequality and Corollary \ref{22.02.15.14.36}, we also obtain the other estimates. The lemma is proved.
\end{proof}

\begin{lem}\label{22.01.28.17.18}
    Let $t>0$ and $b>0$.
    \begin{enumerate}[(i)]
        \item 
        Assume that $f\in \cS(\bR^{d})$ has a support in $B_{3\times2^{-j}b}(0)$. 
        Then for any $x\in B_{2^{-j}b}(0)$,
        \begin{equation*}
        \begin{gathered}
         \aint_{B_{2^{-j}b}(0)}|\cT_{t,0}^{\varepsilon,j} f(y)|^{p_0}\mathrm{d}y\leq N2^{j\varepsilon\gamma p_0}\mathrm{e}^{-\kappa t 2^{j\gamma}\times\frac{p_0(1-\delta)}{2^{\gamma}}} \bM\left(|f|^{p_0}\right)(x),\\
         \aint_{B_{2^{-j}b}(0)}|\psi(t,-i\nabla)\cT_{t,0}^{j} f(y)|^{p_0}\mathrm{d}y\leq N'2^{j\gamma p_0}\mathrm{e}^{-\kappa t 2^{j\gamma}\times\frac{p_0(1-\delta)}{2^{\gamma}}} \bM\left(|f|^{p_0}\right)(x),
        \end{gathered}
        \end{equation*}
        where $N=N(d,\delta,\varepsilon,\gamma,\kappa,M)$ and $N=N(d,\delta,\gamma,\kappa,M)$.
        \item 
        Assume that $f\in \cS(\bR^{d})$ has a support in $B_{3b}(0)$. 
        Then for any $x\in B_{b}(0)$,
        \begin{equation*}
        \begin{gathered}
         \aint_{B_{b}(0)}|\cT_{t,0}^{\varepsilon,\leq0} f(y)|^{p_0}\mathrm{d}y\leq N\bM\left(|f|^{p_0}\right)(x),\\
         \aint_{B_{b}(0)}|\psi(t,-i\nabla)\cT_{t,0}^{\leq 0} f(y)|^{p_0}\mathrm{d}y\leq N\bM\left(|f|^{p_0}\right)(x),
        \end{gathered}
        \end{equation*}
        where $N=N(d,\varepsilon,\gamma,\kappa,M)$.
    \end{enumerate}
\end{lem}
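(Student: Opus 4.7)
The plan is to reduce the local $L_{p_0}$-average bound to the global $L_{p_0}$-boundedness already established in Lemma \ref{22.01.28.16.57}, and then use the compact-support hypothesis on $f$ to absorb $\|f\|_{L_{p_0}(\bR^d)}$ into a maximal function evaluated at $x$. This is essentially a localization argument.

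For part $(i)$, suppose $f$ is supported in $B_{3 \cdot 2^{-j}b}(0)$ and fix $x \in B_{2^{-j}b}(0)$. The triangle inequality gives $\text{supp}(f) \subseteq B_{4 \cdot 2^{-j}b}(x)$, so
\begin{align*}
\|f\|_{L_{p_0}(\bR^d)}^{p_0} = \int_{\text{supp}(f)} |f(y)|^{p_0}\, dy \leq \int_{B_{4 \cdot 2^{-j}b}(x)} |f(y)|^{p_0}\, dy \leq \left|B_{4 \cdot 2^{-j}b}(x)\right| \bM\bigl(|f|^{p_0}\bigr)(x).
\end{align*}
Applying the first $L_{p_0}$-estimate from Lemma \ref{22.01.28.16.57} (with $\delta$ and the same $p = p_0$) yields
\begin{align*}
\int_{B_{2^{-j}b}(0)} |\cT_{t,0}^{\varepsilon,j} f(y)|^{p_0}\, dy \leq \|\cT_{t,0}^{\varepsilon,j} f\|_{L_{p_0}(\bR^d)}^{p_0} \leq N\, 2^{j\varepsilon\gamma p_0} e^{-\kappa t 2^{j\gamma} p_0(1-\delta)/2^{\gamma}} \|f\|_{L_{p_0}(\bR^d)}^{p_0}.
\end{align*}
Combining the two displays and dividing by $|B_{2^{-j}b}(0)|$, the ratio $|B_{4 \cdot 2^{-j}b}(x)|/|B_{2^{-j}b}(0)| = 4^d$ is a harmless dimensional constant, and we obtain the claim for $\cT_{t,0}^{\varepsilon,j}$. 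The bound for $\psi(t,-i\nabla)\cT_{t,0}^{j}$ follows verbatim by replacing the first $L_{p_0}$-estimate from Lemma \ref{22.01.28.16.57} by the third.

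Part $(ii)$ is entirely analogous: if $f$ is supported in $B_{3b}(0)$ and $x \in B_b(0)$, then $\text{supp}(f) \subseteq B_{4b}(x)$, so $\|f\|_{L_{p_0}(\bR^d)}^{p_0} \leq |B_{4b}(x)|\bM(|f|^{p_0})(x)$. We then invoke the second and fourth $L_{p_0}$-bounds of Lemma \ref{22.01.28.16.57}, which provide $L_{p_0}$-boundedness of $\cT_{t,0}^{\varepsilon,\leq 0}$ and $\psi(t,-i\nabla)\cT_{t,0}^{\leq 0}$ with constants independent of $t$ (no exponential factor is needed or expected here), and divide by $|B_b(0)|$ to produce the average.

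No essential obstacle is anticipated, since the heavy lifting -- namely the dispersive-type pointwise kernel estimates of Theorem \ref{22.02.15.11.27} and the resulting $L_{p_0} \to L_{p_0}$ bounds with the correct $j$-dependence -- has already been carried out in Lemma \ref{22.01.28.16.57}. The only quantitative issue to double-check is that the geometric constant $4^d$ and the fixed factor from the $L_{p_0}$-estimate combine cleanly so that the constant $N$ in the conclusion depends only on $(d,\delta,\varepsilon,\gamma,\kappa,M)$, as stated.
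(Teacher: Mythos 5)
Your proposal is correct and follows essentially the same route as the paper: apply the global $L_{p_0}$-bounds of Lemma \ref{22.01.28.16.57}, then use the support condition to dominate $\|f\|_{L_{p_0}(\bR^d)}^{p_0}$ by a ball average controlled by $\bM(|f|^{p_0})(x)$ and divide by the measure of the ball. The only cosmetic difference is that you compare with the ball $B_{4\cdot 2^{-j}b}(x)$ centered at $x$ instead of $B_{3\times 2^{-j}b}(0)$, which only changes the harmless dimensional constant.
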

\begin{proof}
    By Lemma \ref{22.01.28.16.57},
    \begin{align*}
        \aint_{B_{2^{-j}b}(0)}|\cT_{t,0}^{\varepsilon,j} f(y)|^{p_0}\mathrm{d}y&\leq N\frac{2^{j\varepsilon\gamma p_0}\mathrm{e}^{-\kappa t 2^{j\gamma}\times\frac{p_0(1-\delta)}{2^{\gamma}}}}{|B_{2^{-j}b}(0)|}\int_{\bR^d}|f(y)|^{p_0}\mathrm{d}y\\
        &= N2^{j\varepsilon\gamma p_0}\mathrm{e}^{-\kappa t 2^{j\gamma}\times\frac{p_0(1-\delta)}{2^{\gamma}}}\left(\aint_{B_{3\times 2^{-j}b}(0)}|f(y)|^{p_0}\mathrm{d}y\right)\\
        &\leq N2^{j\varepsilon \gamma p_0}\mathrm{e}^{-\kappa t 2^{j\gamma}\times\frac{p_0(1-\delta)}{2^{\gamma}}}\bM\left(|f|^{p_0}\right)(x).
    \end{align*}
    Similarly, using Lemma \ref{22.01.28.16.57}, we can easily obtain the other results. The lemma is proved.
\end{proof}

With the help of Lemmas \ref{20.12.17.20.21}, \ref{22.02.15.16.27}, and \ref{22.01.28.17.18}, we can prove Lemma \ref{20.12.21.16.26}.
\begin{proof}[Proof of Lemma \ref{20.12.21.16.26}]
    First, we prove $(i)$. Due to similarity, we only prove it for $T_{t,0}^{\varepsilon,j}$. Choose a $\zeta\in C^{\infty}(\bR^d)$ satisfying
    \begin{itemize}
        \item $\zeta(y)\in[0,1]$ for all $y\in\bR^d$
        \item  $\zeta(y)=1$ for all $y\in B_{2\times 2^{-j}b}(0)$
        \item   $\zeta(y)=0$ for all $y\in \bR^d\setminus B_{5\times2^{-j}b/2}(0)$.
    \end{itemize}
    Note that $\cT_{t,0}^{\varepsilon,j} f=\cT_{t,0}^{\varepsilon,j} (f\zeta)+\cT_{t,0}^{\varepsilon,j} (f(1-\zeta))$ and $\cT_{t,0}^{\varepsilon,j}(f\zeta)$ can be estimated by Lemma \ref{22.01.28.17.18}.
    Thus it suffices to estimate  $\cT_{t,0}^{\varepsilon,j} (f(1-\zeta))$ and we may assume that $f(y)=0$ if $|y|<2\times 2^{-j}b$. Hence if $y\in B_{2^{-j}b}(0)$ and $|z|<2^{-j}b$, then $|y-z|\leq 2\times 2^{-j}b$ and $f(y-z)=0$. By \cite[Lemma 6.6]{Choi_Kim2022} and H\"older's inequality,
    \begin{equation}
    \label{22.10.24.11.37}
    \begin{aligned}
        &\left|\int_{\bR^d}(\Delta_{j}P_{\varepsilon}(t,0,y_0-z)-\Delta_{j}P_{\varepsilon}(t,0,y_1-z))f(z)\mathrm{d}z\right|\\
        \leq& N|y_0-y_1| \int_{2^{-j}b}^{\infty}(\lambda^dK_{\varepsilon,0,\alpha,j}^{p_0'}(t,0,\lambda))^{1/p_0'}\left(\int_{B_{2\times 2^{-j}b+\lambda}(x)}|f(z)|^{p_0}\mathrm{d}z\right)^{1/p_0}\mathrm{d}\lambda,
    \end{aligned}
    \end{equation}
    where $N=N(d,p_0)$, $|\alpha|=2$, and
    $$
        K_{\varepsilon,0,\alpha,j}^{p_0'}(t,0,\lambda):=\int_{\bS^{d-1}}|D_x^{\alpha}\Delta_j P_{\varepsilon}(t,0,\lambda \omega)|^{p_0'}\sigma(\mathrm{d}\omega).
    $$
    If $b\leq1$, then by Lemma \ref{20.12.17.20.21} and \eqref{22.10.24.11.37},
    \begin{align*}
        &|\cT_{t,0}^{\varepsilon,j}f(y_0)-\cT_{t,0}^{\varepsilon,j}f(y_1)|^{p_0}\\
        \leq& N|y_0-y_1|^{p_0}\mathrm{e}^{-\kappa t 2^{j\gamma}\times\frac{p_0(1-\delta)}{2^{\gamma}}}2^{jp_0\left(\varepsilon\gamma-d(p_0)+1+d/p_0\right)}\\
        &\quad\quad \times \left(\int_{2^{-j}b}^{\infty}\lambda^{-p_0d(p_0)-1}\int_{B_{2\times 2^{-j}b+\lambda}(x)}|f(z)|^{p_0}\mathrm{d}z\mathrm{d}\lambda \right)\\
        \leq& N2^{j(\varepsilon\gamma p_0-p_0d(p_0)+d)}\mathrm{e}^{-\kappa t 2^{j\gamma}\times\frac{p_0(1-\delta)}{2^{\gamma}}}b^{p_0}\int_{2^{-j}b}^{\infty}\lambda^{-p_0d(p_0)-1}\left(\int_{B_{2\times 2^{-j}b+\lambda}(x)}|f(z)|^{p_0}\mathrm{d}z\right)\mathrm{d}\lambda\\
        \leq &N2^{j(\varepsilon\gamma p_0-p_0d(p_0)+d)}\mathrm{e}^{-\kappa t 2^{j\gamma}\times\frac{p_0(1-\delta)}{2^{\gamma}}}\bM\left(|f|^{p_0}\right)(x)b^{p_0}\int_{2^{-j}b}^{\infty}\lambda^{-p_0d(p_0)+d-1}\mathrm{d}\lambda\\
        \leq&  N2^{j\varepsilon\gamma p_0}b^{-p_0d(p_0)+d+p_0}\mathrm{e}^{-\kappa t 2^{j\gamma}\times\frac{p_0(1-\delta)}{2^{\gamma}}}\bM\left(|f|^{p_0}\right)(x)\\
        \leq& N2^{j\varepsilon\gamma p_0}\mathrm{e}^{-\kappa t 2^{j\gamma}\times\frac{p_0(1-\delta)}{2^{\gamma}}}\bM\left(|f|^{p_0}\right)(x).
    \end{align*}
    If $b>1$ and $y\in B_{2^{-j}b}(0)$, then by \cite[Lemma 6.6]{Choi_Kim2022} with $|\alpha|=1$ and Lemma \ref{20.12.17.20.21},
    \begin{align*}
        &|\cT_{t,0}^{\varepsilon,j}f(y)|^{p_0}\\
        \leq& N\left(\int_{2^{-j}b}^{\infty}\lambda^{-p_0d(p_0)-1}\int_{B_{2\times 2^{-j}b+\lambda}(x)}|f(z)|^{p_0}\mathrm{d}z\mathrm{d}\lambda \right)\mathrm{e}^{-\kappa t 2^{j\gamma}\times\frac{p_0(1-\delta)}{2^{\gamma}}}2^{jp_0\left(\varepsilon\gamma-d(p_0)+d/p_0\right)}\\
        \leq& N2^{j(\varepsilon\gamma p_0-p_0d(p_0)+d)}\mathrm{e}^{-\kappa t 2^{j\gamma}\times\frac{p_0(1-\delta)}{2^{\gamma}}}\int_{2^{-j}b}^{\infty}\lambda^{-p_0d(p_0)-1}\left(\int_{B_{2\times 2^{-j}b+\lambda}(x)}|f(z)|^{p_0}\mathrm{d}z\right)\mathrm{d}\lambda\\
        \leq& N2^{j(\varepsilon\gamma p_0-p_0d(p_0)+d)}\mathrm{e}^{-\kappa t 2^{j\gamma}\times\frac{p_0(1-\delta)}{2^{\gamma}}}\bM\left(|f|^{p_0}\right)(x)\int_{2^{-j}b}^{\infty}\lambda^{-p_0d(p_0)+d-1}\mathrm{d}\lambda\\
        \leq&  N2^{j\varepsilon\gamma p_0}b^{-p_0d(p_0)+d}\mathrm{e}^{-\kappa t 2^{j\gamma}\times\frac{p_0(1-\delta)}{2^{\gamma}}}\bM\left(|f|^{p_0}\right)(x)\\
        \leq& N2^{j\varepsilon\gamma p_0}\mathrm{e}^{-\kappa t 2^{j\gamma}\times\frac{p_0(1-\delta)}{2^{\gamma}}}\bM\left(|f|^{p_0}\right)(x).
    \end{align*}
    For $(ii)$, note that if $|\alpha|\geq1$, then
    $$
        |\alpha|+d/p_0-\lfloor d/p_0\rfloor\geq |\alpha|>0.
    $$
    Thus, applying the similar arguments in $(i)$ with Lemma \ref{22.02.15.16.27} instead of Lemma \ref{20.12.17.20.21}, we also have $(ii)$. 
    The lemma is proved.
\end{proof}

\appendix

\mysection{Weighted multiplier and Littlewood--Paley theorem}

\begin{prop}[Weighted Mikhlin multiplier theorem]
			\label{21.02.24.16.49}
Let $p\in(1,\infty)$, $w\in A_p(\bR^{d})$, and $f \in \cS(\bR^{d})$. 
Suppose that \begin{equation}
		\label{22.05.12.13.14}
\sup_{R>0}\left(R^{2|\alpha|-d}\int_{R<|\xi|<2R}|D^{\alpha}_{\xi}\pi(\xi)|^{2}\mathrm{d}\xi\right)^{1/2}\leq N^*,\quad\forall |\alpha|\leq d.
\end{equation}
Then there exists a constant $N=N(d,p,K,N^*)$ such that
$$
\|\bT_{\pi}f\|_{L_p(\bR^d,w\,\mathrm{d}x)}\leq N\|f\|_{L_p(\bR^d,w\,\mathrm{d}x)},
$$
where $[w]_{A_p(\bR^d)}\leq K$ and
$$
\bT_{\pi}f(x):=\cF^{-1}[\pi\cF[f]](x).
$$
\end{prop}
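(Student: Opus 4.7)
The strategy is to verify that $\bT_{\pi}$ is a Calder\'on--Zygmund convolution operator whose kernel satisfies H\"ormander's regularity condition, and then appeal to the classical weighted theory for such operators.

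\textbf{Step 1 (Unweighted $L_2$-boundedness and $L^\infty$-bound on $\pi$).} I would first take $|\alpha|=0$ in \eqref{22.05.12.13.14} and combine it with the estimates for $|\alpha|\leq d$ to obtain a uniform Sobolev bound: for each $R>0$ the rescaled function $\pi_R(\xi):=\pi(R\xi)$ restricted to the annulus $\{1<|\xi|<2\}$ satisfies $\|\pi_R\|_{H^d(\{1<|\xi|<2\})}\leq N N^{*}$. Since $d>d/2$, Sobolev embedding on the annulus yields $\|\pi\|_{L_\infty(\bR^d)}\leq N(d)N^*$. Plancherel's theorem then gives $\|\bT_{\pi}f\|_{L_2(\bR^d)}\leq N(d)N^*\|f\|_{L_2(\bR^d)}$.

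\textbf{Step 2 (Kernel estimates via dyadic decomposition).} Choose a smooth partition $1=\sum_{j\in\bZ}\phi(2^{-j}\xi)$ on $\bR^d\setminus\{0\}$ with $\phi$ supported in $\{1/2\leq|\xi|\leq 2\}$, and set $\pi_j(\xi):=\pi(\xi)\phi(2^{-j}\xi)$, $K_j:=\cF^{-1}[\pi_j]$. For any multi-index $\alpha$ with $|\alpha|\leq d$, the identity $x^\alpha K_j(x)=(\pm i)^{|\alpha|}\cF^{-1}[D_\xi^\alpha\pi_j](x)$ and Cauchy--Schwarz give
\[
|x^\alpha K_j(x)|\leq \|D_\xi^\alpha \pi_j\|_{L_1}\leq |\mathrm{supp}\,\pi_j|^{1/2}\|D_\xi^\alpha \pi_j\|_{L_2}\lesssim N^*\, 2^{j(d-|\alpha|)},
\]
by the Leibniz rule applied to $\pi_j=\pi\cdot\phi(2^{-j}\cdot)$ together with \eqref{22.05.12.13.14}. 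Taking $|\alpha|=0$ and $|\alpha|=d$ I obtain the pointwise decay
\[
|K_j(x)|\leq N N^*\, 2^{jd}(1+2^j|x|)^{-d}.
\]
The analogous computation applied to $i\xi_\ell \pi_j$ (again using Leibniz and \eqref{22.05.12.13.14}) yields
\[
|\nabla K_j(x)|\leq N N^*\, 2^{j(d+1)}(1+2^j|x|)^{-d}.
\]

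\textbf{Step 3 (H\"ormander's integral condition).} Setting $K:=\sum_j K_j$, the pointwise bound of Step 2 shows $|K(x)|\lesssim N^*|x|^{-d}$. To verify H\"ormander's condition
\[
\int_{|x|>2|y|}|K(x-y)-K(x)|\,dx \leq N\, N^*,
\]
I would split the sum according to whether $2^j|y|\leq 1$ or $2^j|y|>1$. For the low-frequency part ($2^j|y|\leq 1$), apply the mean value theorem together with the gradient bound to estimate the $j$-th term by $N N^*\,2^j|y|$, then sum a geometric series. For the high-frequency part ($2^j|y|>1$), use the triangle inequality and the decay bound $\int_{|x|>2|y|}|K_j(x)|\,dx\lesssim N^*(2^j|y|)^{-1}$ (which follows from the pointwise estimate with $|\alpha|=d$), and again sum. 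Combined with Step 1, this shows $\bT_\pi$ is a Calder\'on--Zygmund operator with constants depending only on $d$ and $N^*$.

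\textbf{Step 4 (Weighted conclusion).} Once a convolution operator is $L_2$-bounded and its kernel satisfies H\"ormander's condition, the classical weighted Calder\'on--Zygmund theorem yields $\|\bT_\pi f\|_{L_p(\bR^d, w\,dx)}\leq N(d,p,K,N^*)\|f\|_{L_p(\bR^d,w\,dx)}$ for every $p\in(1,\infty)$ and every $w\in A_p(\bR^d)$ with $[w]_{A_p(\bR^d)}\leq K$; this can be proved, for instance, via the weighted good-$\lambda$ inequality relating the maximal truncation of $\bT_\pi f$ to $\bM f$ and $\bM(|\bT_\pi f|^r)^{1/r}$ for some $r<1$. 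The main obstacle is the bookkeeping in Step 3: one must carefully check that only $d$ derivatives of $\pi$ (rather than $d+1$) suffice for H\"ormander's condition, which is exactly why the bound is stated in the integral $L_2$ form rather than in Mikhlin's pointwise form.
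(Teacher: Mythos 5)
Your plan (realize $\bT_\pi$ as a Calder\'on--Zygmund convolution operator and then quote weighted theory) is a genuinely different route from the paper, which just combines the unweighted H\"ormander multiplier theorem of Grafakos with the reverse H\"older property ($w\in A_{p/r}$ for some $r>1$) and the weighted multiplier results of Fackler--Hyt\"onen--Lindemulder. However, as written your argument has a real gap in Step 3: the pointwise bounds of Step 2, $|K_j(x)|\lesssim N^*2^{jd}(1+2^j|x|)^{-d}$ and $|\nabla K_j(x)|\lesssim N^*2^{j(d+1)}(1+2^j|x|)^{-d}$, are correct but decay only like $|x|^{-d}$, which is not integrable at infinity in $\bR^d$. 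Consequently neither the claimed tail estimate $\int_{|x|>2|y|}|K_j(x)|\,dx\lesssim N^*(2^j|y|)^{-1}$ (that would require decay of order $(2^j|x|)^{-(d+1)}$, i.e. $d+1$ derivatives of $\pi$) nor the mean-value estimate ``$j$-th term $\lesssim N^*2^j|y|$'' (which needs $\|\nabla K_j\|_{L_1(\bR^d)}\lesssim N^*2^j$) follows by integrating those pointwise bounds; both integrals diverge logarithmically. The hypothesis \eqref{22.05.12.13.14} has to be used in its $L_2$ form: by Plancherel, $\|\,|\cdot|^{|\alpha|}K_j\|_{L_2(\bR^d)}\lesssim N^*2^{j(d/2-|\alpha|)}$ and $\|\,|\cdot|^{|\alpha|}\nabla K_j\|_{L_2(\bR^d)}\lesssim N^*2^{j(1+d/2-|\alpha|)}$ for $|\alpha|\le d$, and Cauchy--Schwarz against the square-integrable weight $(1+2^j|x|)^{-d}$ then gives $\|\nabla K_j\|_{L_1(\bR^d)}\lesssim N^*2^j$ and $\int_{|x|>R}|K_j(x)|\,dx\lesssim N^*(2^jR)^{-d/2}$, which is what actually makes your two sums converge.

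Step 4 is also not a theorem in the generality you invoke: $L_2$-boundedness together with H\"ormander's integral condition gives the unweighted $L_p$ theory and weak $(1,1)$, but it does not imply boundedness on $L_p(\bR^d,w\,dx)$ for every $w\in A_p(\bR^d)$ -- there are operators with H\"ormander kernels that fail all such weighted bounds (Martell--P\'erez--Trujillo-Gonz\'alez). To reach every $A_p$ weight you must use the full strength of having all derivatives up to order $d$, for instance by proving a sharp-function estimate of the type $\bM^{\sharp}\left(\bT_\pi f\right)(x)\leq N\,N^*\left(\bM\left(|f|^{r}\right)(x)\right)^{1/r}$ for every $r>1$ (this is exactly the Kurtz--Wheeden argument, and their theorem is precisely the statement being proved), or by arguing as the paper does: unweighted bounds from the H\"ormander multiplier theorem, then $w\in A_{p/r}(\bR^d)$ for some $r>1$ by reverse H\"older, then the weighted multiplier theorem of Fackler--Hyt\"onen--Lindemulder. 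With these two repairs your approach goes through, but the key estimates in Steps 3--4 do not stand as proposed.
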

\begin{proof}
By \cite[Theorem 6.2.7]{grafakos2014classical}, the operator $\bT_{\pi}:L_q(\bR^d)\to L_{q}(\bR^d)$ is bounded for all $q\in(1,\infty)$. It is well-known that there exists $r>1$ such that $w\in A_{p/r}(\bR^d)$.
Applying \cite[Corollaries 6.10, 6.11, and Remark 6.14]{fackler2020weighted}, we have
\begin{align*}
\|\bT_{\pi} f\|_{L_p(\bR^d,w\,\mathrm{d}x)}&\leq N\|f\|_{L_p(\bR^d,w\,\mathrm{d}x)},
\end{align*}
where $N=N(d,p,K,N^*)$. The proposition is proved.
\end{proof}

\begin{prop}[Weighted Littlewood--Paley theorem]
\label{prop:WLP}
    Let $p\in (1, \infty)$ and $w\in A_p(\bR^d)$. Then we have 
    \begin{align}
										\label{20230128 01}
        \|Sf\|_{L_p(\bR^d,w\,\mathrm{d}x)} &\leq C(d,p) [w]_{A_p(\bR^d)}^{\max(\frac{1}{2}, \frac{1}{p-1})} \|f\|_{L_p(\bR^d,w\,\mathrm{d}x)}
\end{align}
and
\begin{align}
						\label{20230128 03}
        \|f\|_{L_p(\bR^d,w\,\mathrm{d}x)} &\leq C(d,p) [w]_{A_p(\bR^d)}^{\frac{\max(1/2, 1/(p'-1))}{p-1}} \|Sf \|_{L_p(\bR^d,w\,\mathrm{d}x)},
    \end{align}
    where 
\begin{align}
							\label{20230128 02}
    Sf(x):=\left(\sum_{j\in\bZ}|\Delta_jf(x)|^2\right)^{1/2}.
\end{align}
\end{prop}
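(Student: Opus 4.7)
My plan has two parts: I would first establish \eqref{20230128 01} via Khintchine randomization combined with the weighted Mikhlin multiplier theorem (Proposition \ref{21.02.24.16.49}), and then deduce \eqref{20230128 03} from \eqref{20230128 01} by a duality argument.

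For the forward inequality \eqref{20230128 01}, I would start by applying Khintchine's inequality pointwise in $x$. Let $\{\varepsilon_j\}_{j\in\bZ}$ denote a Rademacher sequence on a probability space $(\Omega,\bP)$. Khintchine gives
\begin{align*}
\Bigl(\sum_{j\in\bZ}|\Delta_jf(x)|^2\Bigr)^{1/2}\approx\Bigl(\bE\Bigl|\sum_{j\in\bZ}\varepsilon_j\Delta_jf(x)\Bigr|^p\Bigr)^{1/p}
\end{align*}
with equivalence constants depending only on $p$. Raising to the $p$-th power, integrating against $w\,dx$, and applying Fubini yields
\begin{align*}
\|Sf\|_{L_p(\bR^d,w\,dx)}^p\approx\bE\|T_\varepsilon f\|_{L_p(\bR^d,w\,dx)}^p,
\end{align*}
where $T_\varepsilon f:=\sum_j\varepsilon_j\Delta_jf$ is the Fourier multiplier with symbol $m_\varepsilon(\xi):=\sum_j\varepsilon_j\cF[\Psi](2^{-j}\xi)$. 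Because $\cF[\Psi]$ is supported in $\{1/2\leq|\xi|\leq 2\}$, at each $\xi\neq 0$ at most three dyadic terms contribute to $m_\varepsilon(\xi)$; a direct scaling argument then gives the pointwise Mikhlin estimate $|D_\xi^\alpha m_\varepsilon(\xi)|\leq C_\alpha|\xi|^{-|\alpha|}$ uniformly in $\varepsilon\in\{-1,1\}^{\bZ}$ for every $|\alpha|\leq d$, and in particular the $L^2$-Hörmander condition \eqref{22.05.12.13.14}. Proposition \ref{21.02.24.16.49} therefore yields $\|T_\varepsilon f\|_{L_p(\bR^d,w\,dx)}\leq C(d,p,K)\|f\|_{L_p(\bR^d,w\,dx)}$ uniformly in $\varepsilon$, which combined with the Khintchine equivalence above produces \eqref{20230128 01}.

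For the reverse inequality \eqref{20230128 03}, I would argue by duality. Put $\sigma:=w^{1-p'}$; a direct computation from the definition of the $A_p$-class gives $\sigma\in A_{p'}(\bR^d)$ with $[\sigma]_{A_{p'}(\bR^d)}=[w]_{A_p(\bR^d)}^{1/(p-1)}$. Choose an auxiliary $\tilde\Psi\in\cS(\bR^d)$ whose Fourier transform is supported in $\{1/4\leq|\xi|\leq 4\}$ and equals $1$ on $\{1/2\leq|\xi|\leq 2\}$, and let $\tilde\Delta_j$ be the associated Littlewood-Paley projector, so that $\Delta_j=\tilde\Delta_j\Delta_j$. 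Plancherel's identity applied dyadically gives
\begin{align*}
\int_{\bR^d}f\bar g\,dx=\sum_{j\in\bZ}\int_{\bR^d}\Delta_jf\cdot\overline{\tilde\Delta_jg}\,dx
\end{align*}
for Schwartz $f,g$. A pointwise Cauchy-Schwarz in $j$ followed by Hölder's inequality with the conjugate pair $(L_p(w\,dx),L_{p'}(\sigma\,dx))$ yields
\begin{align*}
\Bigl|\int_{\bR^d}f\bar g\,dx\Bigr|\leq\|Sf\|_{L_p(\bR^d,w\,dx)}\|\tilde Sg\|_{L_{p'}(\bR^d,\sigma\,dx)},
\end{align*}
where $\tilde S$ is the square function built from $\{\tilde\Delta_j\}$. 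Applying the forward inequality \eqref{20230128 01} to $\tilde Sg$ with the dual weight $\sigma$ and taking the supremum of the left-hand side over $\|g\|_{L_{p'}(\sigma\,dx)}\leq 1$ delivers \eqref{20230128 03}, with the stated $A_p$-exponent obtained by substituting $[\sigma]_{A_{p'}}^{\max(1/2,1/(p'-1))}=[w]_{A_p}^{\max(1/2,1/(p'-1))/(p-1)}$.

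The principal technical difficulty is extracting the sharp $A_p$-power $\max(1/2,1/(p-1))$ in \eqref{20230128 01}: the constant provided by Proposition \ref{21.02.24.16.49} depends implicitly on $[w]_{A_p(\bR^d)}$ through the reverse-Hölder-based factorization $w\in A_{p/r}$ for some $r>1$, and isolating its precise exponent requires more refined tools than a routine application of Proposition \ref{21.02.24.16.49}. I would either invoke sharp weighted extrapolation from the endpoint estimate $\|Sf\|_{L_2(\bR^d,w\,dx)}\lesssim[w]_{A_2(\bR^d)}\|f\|_{L_2(\bR^d,w\,dx)}$ (itself obtained from $\ell^2$-valued Calderón--Zygmund theory combined with Hytönen--Pérez $A_2$-type tracking), or appeal directly to the sparse domination of $S$ due to Lerner. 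Such sharp weighted bounds are the only non-routine input and I would cite them as a black box at this final step.
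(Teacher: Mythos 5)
Your duality half is essentially the paper's own argument: the paper pairs $f$ against $g$ through $\sum_j \Delta_j f\,(\Delta_{j-1}+\Delta_j+\Delta_{j+1})g$ instead of your fattened projector $\tilde\Delta_j$, applies Cauchy--Schwarz and H\"older with the dual weight $\bar w=w^{-1/(p-1)}$, and then invokes \eqref{20230128 01} for $\bar w\in A_{p'}(\bR^d)$ together with $[\bar w]_{A_{p'}(\bR^d)}=[w]_{A_p(\bR^d)}^{1/(p-1)}$; your variant additionally requires the forward bound for the auxiliary square function $\tilde S$, which holds by the same reasoning, so the difference is cosmetic. For the forward bound \eqref{20230128 01} the paper does not use Khintchine plus Proposition \ref{21.02.24.16.49} at all: it dominates $|\Delta_j f(x)|$ pointwise by Wilson's intrinsic functional $\widetilde A_{1,1}(f)(x,2^j)$, uses the pointwise comparisons $\widetilde\sigma_{1,1}\approx\widetilde G_{1,1}\lesssim G_{\alpha'}$ from \cite{Wil2007square}, and then cites Lerner's sharp weighted bound for the intrinsic square function \cite{Ler2011weighted}. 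Your Khintchine/Mikhlin computation is correct as far as it goes, but, as you yourself note, it only yields \eqref{20230128 01} with an unquantified constant $N(d,p,K)$; to obtain the explicit power $[w]_{A_p(\bR^d)}^{\max(1/2,1/(p-1))}$ you must fall back on a sharp black box, and of the two you propose only the second actually reaches the stated exponent.

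Two concrete caveats. First, sharp extrapolation from the linear estimate $\|Sf\|_{L_2(\bR^d,w\,dx)}\lesssim[w]_{A_2(\bR^d)}\|f\|_{L_2(\bR^d,w\,dx)}$ yields the exponent $\max\bigl(1,\tfrac{1}{p-1}\bigr)$, not $\max\bigl(\tfrac12,\tfrac{1}{p-1}\bigr)$; for $p>2$ this is strictly worse than the claimed power, so that route cannot prove \eqref{20230128 01} as stated (one would need, say, an $[w]_{A_3(\bR^d)}^{1/2}$ bound at $p=3$ to extrapolate from, and producing such a bound is exactly the nontrivial content of Lerner's theorem). Second, Lerner's theorem is formulated for the intrinsic square function $G_\alpha$, built from compactly supported kernels, whereas the kernels $\Psi_j$ of $\Delta_j$ are globally supported; to ``appeal directly to the sparse domination of $S$'' you still need the pointwise domination $Sf\lesssim\widetilde\sigma_{1,1}(f)$ together with Wilson's comparison $\widetilde G_{\alpha,\varepsilon}\lesssim G_{\alpha'}$, which is precisely the reduction the paper writes out. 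Neither point is fatal --- dropping the extrapolation option and carrying out (or properly citing) the reduction to the intrinsic square function makes your proof complete and essentially equivalent to the paper's.
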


\begin{proof}
This result is already proved in several literature \cite{kurtz1980little,rychkov2001weights}. They did not, however, reveal the growth of implicit constants relative to $A_p(\bR^d)$-seminorm. 
These optimum implicit constants could be obtained from recent general theories.
    The first inequality is proved in \cite{Ler2011weighted,Wil2007square} for various types of Littlewood--Paley operators.
    We show that $Sf$ is one of such Littlewood--Paley operators.
    The second inequality follows from the first inequality and the duality of $L_p(\bR^d,w\,\mathrm{d}x)$, which will be shown in the last part of the proof.

For $\alpha \in (0,1]$, let $\mathcal{C}_\alpha$ be a family of functions $\phi:\bR^d \to \bR$ supported in $\{x\in\bR^d : |x|\leq 1\}$ such that
    \begin{align*}
        \int_{\bR^d} \phi(x) \mathrm{d}x=0,\quad |\phi(x) - \phi(x')| \leq |x-x'|^\alpha,\quad \forall x, x' \in \bR^d.
    \end{align*}
    Then we define a maximal operator over the family $\mathcal{C}_\alpha$ as
    \begin{align}\label{ineq:22 05 06 17 32}
        A_\alpha(f)(x,t) := \sup_{\phi \in \mathcal{C}_\alpha} |\phi_t \ast f(x)|,\quad \phi_t(y) = t^{-d}\phi(t^{-1}y).
    \end{align}
    Using \eqref{ineq:22 05 06 17 32}, we construct intrinsic square functions as follows:
    \begin{align*}
        G_\alpha(f)(x) &:= \biggl(\int_{\Gamma(x)}\big|A_\alpha(f)(y,t)\big|^2 \frac{\mathrm{d}y\mathrm{d}t}{t^{d+1}}\biggr)^{1/2},\\
        g_\alpha(f)(x) &:= \biggl(\int_0^\infty\big|A_\alpha(f)(x,t)\big|^2\frac{\mathrm{d}t}{t}\biggr)^{1/2},\\
        \sigma_\alpha(f)(x) &:= \biggl(\sum_{j\in\bZ} \big| A_\alpha(f)(x, 2^j)\big|^2\biggr)^{1/2},
    \end{align*}
    where $\Gamma(x)$ denotes the conic area $\{(y,t) \in \bR^d\times\bR_+ : |x-y|\leq t\}$. 
    In \cite{Wil2007square}, Wilson  showed pointwise equivalences among $G_\alpha$, $g_\alpha$ and $\sigma_\alpha$, \textit{i.e.}
    \begin{align}\label{ineq:22 05 06 18 12}
        G_\alpha(f)(x) \approx g_\alpha(f)(x) \approx \sigma_\alpha(f)(x),
    \end{align}
where the implicit constants depend only on $\alpha$ and $d$.
    Moreover, Lerner \cite[Theorem 1.1]{Ler2011weighted} proved 
    \begin{align}\label{ineq:WLP}
        \|G_\alpha\|_{L_p(\bR^d,w\,\mathrm{d}x)\to L_p(\bR^d,w\,\mathrm{d}x)} \leq C(\alpha, d, p) [w]_{A_p(\bR^d)}^{\max(\frac{1}{2}, \frac{1}{p-1})}.
    \end{align}
    It should be remarked that \cite[Theorem 1.1]{Ler2011weighted} covers a broad class of operators of Littlewood--Paley type.
However, this result does not give \eqref{20230128 01} directly since the Littlewood--Paley operator considered in this paper is not an integral form (see \eqref{20230128 02}).
Our Littlewood--Paley operator is given  as the summation of $\Delta_j f = \Psi_j \ast f$ over $\bZ$ and the symbol of $\Delta_j$ is supported in $\{\xi\in\bR^d:2^{j-1}\leq|\xi| \leq2^{j+1} \}$.
    Note that $\Psi_j$ is globally defined due to the uncertainty principle, while elements in $\mathcal{C}_\alpha$ are supported in $\{x\in\bR^d:|x|\leq 1\}$.
    To fill this gap, we need a new family $\mathcal{C}_{\alpha,\varepsilon}$  introduced in \cite{Wil2007square}.
    For $\alpha \in (0,1]$ and $\varepsilon>0$, let $\mathcal{C}_{\alpha, \varepsilon}$ be a family of functions $\phi:\bR^d \to \bR$ satisfying
    \begin{equation*}
        \begin{gathered}
        \int_{\bR^d} \phi(x) \mathrm{d}x=0,\quad |\phi(x)| \leq (1+|x|)^{-d-\varepsilon},\\
        |\phi(x) - \phi(x')| \leq |x-x'|^\alpha \bigl((1+|x|)^{-d-\varepsilon} + (1+|x'|)^{-d-\varepsilon}\bigr),\quad \forall x, x' \in \bR^d.
    \end{gathered}
    \end{equation*}
    Then we can define $\widetilde{A}_{\alpha, \varepsilon}$ on the basis of all functions in the above class as in \eqref{ineq:22 05 06 17 32}, \textit{i.e.}
    \begin{align}
        \widetilde{A}_{\alpha,\varepsilon}(f)(x,t) := \sup_{\phi \in \mathcal{C}_{\alpha,\varepsilon}} |\phi_t \ast f(x)|,\quad \phi_t(y) = t^{-d}\phi(t^{-1}y).
    \end{align}
    We can also define $\widetilde{G}_{\alpha, \varepsilon}, \widetilde{g}_{\alpha, \varepsilon}$ and $\widetilde{\sigma}_{\alpha, \varepsilon}$ similarly to $G_\alpha$, $g_\alpha$ and $\sigma_\alpha$.    Likewise, they become equivalent, \textit{i.e.}
    \begin{align}\label{ineq:22 05 06 18 13}
        \widetilde{G}_{\alpha,\varepsilon}(f)(x) \approx \widetilde{g}_{\alpha,\varepsilon}(f)(x) \approx \widetilde{\sigma}_{\alpha,\varepsilon}(f)(x).
    \end{align}
    Here, the implicit constants depend only on $\alpha,\varepsilon$, and $d$. Since every Schwartz function is contained in $\mathcal{C}_{1,1}$, it follows that $\Psi \in \mathcal{C}_{1,1}$.
    Thus we have
    \begin{align*}
        |\Delta_jf(x)| \leq \widetilde{A}_{1,1}(f)(x, 2^j),
    \end{align*}
    which yields 
    \begin{align}\label{ineq:22 05 06 18 27}
        Sf(x) \leq C(d)\widetilde{\sigma}_{1,1}(f)(x)\approx C(d) \widetilde{G}_{1,1}(f)(x).
    \end{align}
It is also known in \cite[Theorem 2]{Wil2007square}, for all $\alpha \in (0,1]$, $\varepsilon>0$, and $0<\alpha' \leq \alpha$, there is a constant $C = C(\alpha, \alpha', \varepsilon, d)$ such that
    \begin{align}\label{ineq:22 05 06 18 28}
        \widetilde{G}_{\alpha, \varepsilon}(f)(x) \leq C G_{\alpha'}(f)(x).
    \end{align}
Thus finally by \eqref{ineq:WLP}, \eqref{ineq:22 05 06 18 13}, \eqref{ineq:22 05 06 18 27} and \eqref{ineq:22 05 06 18 28}, we have
    \begin{equation}
     \label{22.05.12.16.08}
     \begin{aligned}
        \big\|Sf \big\|_{L_p(\bR^d,w\,\mathrm{d}x)} 
        &\leq C(d) \big\|\widetilde{G}_{1,1}(f) \big\|_{L_p(\bR^d,w\,\mathrm{d}x)}\\
        &\leq C(d) \big\|G_{1}(f) \big\|_{L_p(\bR^d,w\,\mathrm{d}x)} \leq C(d, p) [w]_{A_p(\bR^d)}^{\max(\frac{1}{2}, \frac{1}{p-1})} \| f\|_{L^p(\bR^d,w\,\mathrm{d}x)}.
    \end{aligned}
    \end{equation}
    For the converse inequality \eqref{20230128 03}, we recall that the topological dual space of $L_p(\bR^d,w\,\mathrm{d}x)$ is $L_{p'}(\bR^d,\bar{w}\,\mathrm{d}x)$ where
    $$
    \frac{1}{p}+\frac{1}{p'}=1,\quad \bar{w}(x):=(w(x))^{-\frac{1}{p-1}}.
    $$
    For more detail, see \cite[Theorem A.1]{Choi_Kim2022}.
Then by the almost orthogonality of $\Delta_j$, the Cauchy-Schwartz inequality, H\"older's inequality, and \eqref{22.05.12.16.08}, for all $f\in L_p(\bR^d,w\,\mathrm{d}x)$ and $g\in C_c^{\infty}(\bR^d)$,
    \begin{equation}
    \label{22.05.12.16.15}
        \begin{aligned}
    \int_{\bR^d}f(x)g(x)\mathrm{d}x&=\int_{\bR^d}\sum_{j\in\bZ}\Delta_jf(x)(\Delta_{j-1}+\Delta_j+\Delta_{j+1})g(x)\mathrm{d}x\\
    &\leq 3\int_{\bR^d}Sf(x)Sg(x)\mathrm{d}x\leq 3\|Sf\|_{L_p(\bR^d,w\,\mathrm{d}x)}\|Sg\|_{L_{p'}(\bR^d,\bar{w}\,\mathrm{d}x)}\\
    &\leq N(d,p)[\bar{w}]_{A_{p'}(\bR^d)}^{\max(\frac{1}{2},\frac{1}{p'-1})}\|Sf\|_{L_p(\bR^d,w\,\mathrm{d}x)}\|g\|_{L_{p'}(\bR^d,\bar{w}\,\mathrm{d}x)}.
    \end{aligned}
    \end{equation}
Combining \eqref{22.05.12.16.08} and \eqref{22.05.12.16.15}, we have
$$
\|f\|_{L_p(\bR^d,w\,\mathrm{d}x)}=\sup_{\|g\|_{L_{p'}(\bR^d,\bar{w}\,\mathrm{d}x)}\leq 1}\left|\int_{\bR^d}f(x)g(x)\mathrm{d}x\right|\leq N(d,p)[w]_{A_{p}(\bR^d)}^{\frac{\max(1/2,1/(p'-1)
)}{p-1}}\|Sf\|_{L_p(\bR^d,w\,\mathrm{d}x)}.
$$
The proposition is proved.
\end{proof}

\mysection{Properties of function spaces}
\begin{lem}
\label{wbound}
Let $p\in(1,\infty)$ and $w\in A_p(\bR^d)$. Suppose that
$$
[w]_{A_p(\bR^d)}\leq K.
$$
Then there exists a positive constant $N=N(d,p,K)$ such that
$$
\|S_0f\|_{L_p(\bR^d,w\,\mathrm{d}x)}+\sup_{j\in\bZ}\|\Delta_jf\|_{L_p(\bR^d,w\,\mathrm{d}x)}\leq N\|f\|_{L_p(\bR^d,w\,\mathrm{d}x)},\quad \forall f\in L_p(\bR^d,w\,\mathrm{d}x).
$$
\end{lem}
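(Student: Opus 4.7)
The plan is to realize both $S_0$ and each $\Delta_j$ as Fourier multiplier operators whose symbols satisfy the Mikhlin condition \eqref{22.05.12.13.14} uniformly in $j$, and then to invoke the weighted Mikhlin multiplier theorem (Proposition \ref{21.02.24.16.49}). This reduces the entire lemma to a verification that two explicit smooth symbols lie in the Mikhlin class with bounds depending only on $d$ (through $\Psi$).

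First I would handle $S_0$. Its symbol is $\Phi(\xi) := \sum_{j \leq 0} \cF[\Psi](2^{-j}\xi)$. Because $\cF[\Psi]$ is supported in $\{\tfrac12 \leq |\xi| \leq 2\}$, this sum is locally finite and $\Phi \in C^\infty_c(\bR^d)$ with support in $\{|\xi| \leq 2\}$. Hence $D^{\alpha}\Phi$ is bounded and compactly supported for every $|\alpha| \leq d$, so
\[
\sup_{R>0}\left(R^{2|\alpha|-d}\int_{R<|\xi|<2R}|D^{\alpha}\Phi(\xi)|^{2}d\xi\right)^{1/2} \leq N^*=N^*(d,\Psi).
\]
Applying Proposition \ref{21.02.24.16.49} immediately gives $\|S_0 f\|_{L_p(\bR^d,w\,dx)} \leq N(d,p,K)\|f\|_{L_p(\bR^d,w\,dx)}$.

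Next I would treat $\Delta_j$, with symbol $\pi_j(\xi) = \cF[\Psi](2^{-j}\xi)$. The key computation is
\[
D^{\alpha}\pi_j(\xi) = 2^{-j|\alpha|}(D^{\alpha}\cF[\Psi])(2^{-j}\xi),
\]
which is supported in $\{2^{j-1} \leq |\xi| \leq 2^{j+1}\}$. Substituting $\eta = 2^{-j}\xi$,
\[
R^{2|\alpha|-d}\!\int_{R<|\xi|<2R}\!|D^{\alpha}\pi_j(\xi)|^{2}d\xi \;=\; (2^{-j}R)^{2|\alpha|-d}\!\int_{2^{-j}R < |\eta| < 2^{-j+1}R}\!|(D^{\alpha}\cF[\Psi])(\eta)|^{2}d\eta.
\]
The integrand is supported in the annulus $\{\tfrac12 \leq |\eta| \leq 2\}$, so the right-hand side vanishes unless $R \in [2^{j-2}, 2^{j+1}]$, and on that range $2^{-j}R$ stays bounded between two positive constants. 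Since $\cF[\Psi] \in \cS(\bR^d)$, $D^{\alpha}\cF[\Psi]$ is uniformly bounded and the above quantity is bounded by a constant $N^*=N^*(d,\Psi)$ independent of $j$. Applying Proposition \ref{21.02.24.16.49} again gives $\|\Delta_j f\|_{L_p(\bR^d,w\,dx)} \leq N(d,p,K)\|f\|_{L_p(\bR^d,w\,dx)}$ uniformly in $j\in\bZ$. Summing the two estimates yields the lemma.

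The only potentially subtle point is the uniformity of $N^*$ in $j$ for the $\Delta_j$ symbols; this follows cleanly from the dyadic scaling of $\cF[\Psi]$, so no genuine obstacle should appear. All constants depend on $d$, $p$, $K$ and on the fixed choice of $\Psi$ made in Definition \ref{def:bessel besov}, matching the claim $N=N(d,p,K)$.
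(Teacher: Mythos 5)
Your proof is correct, but it follows a genuinely different route from the paper. You reduce both bounds to the weighted Mikhlin multiplier theorem (Proposition \ref{21.02.24.16.49}), checking the condition \eqref{22.05.12.13.14} for the symbol of $S_0$ directly and for the symbols $\cF[\Psi](2^{-j}\cdot)$ of $\Delta_j$ by the dyadic change of variables $\eta=2^{-j}\xi$, which makes the uniformity in $j$ immediate; since that proposition is proved independently (via Grafakos and Fackler--Hyt\"onen--Lindemulder), there is no circularity. The paper instead argues on the kernel side: $|S_0f|\leq N\,\bM f$ pointwise because the kernel $\Phi$ is Schwartz, so the weighted Hardy--Littlewood maximal theorem handles $S_0$, while each $\Delta_j$ is shown to be a Calder\'on--Zygmund operator whose $L_2$-bound and standard-kernel constants are independent of $j$ (again by dyadic scaling of $\Psi_j$), after which a weighted Calder\'on--Zygmund theorem is applied. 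Your route is shorter given that Proposition \ref{21.02.24.16.49} is already available; the paper's route is more elementary in that it works with real-variable kernel estimates rather than multiplier smoothness. Two small points you should make explicit: the pointwise sum $\sum_{j\leq 0}\cF[\Psi](2^{-j}\xi)$ equals $1$ on a punctured ball and $0$ at $\xi=0$, so ``$\Phi\in C_c^\infty$'' should be read as the a.e.\ (smooth) representative of the multiplier, which is all the Mikhlin condition sees; and Proposition \ref{21.02.24.16.49} is stated for $f\in\cS(\bR^d)$, so the claimed estimate for all $f\in L_p(\bR^d,w\,dx)$ requires the routine density of $\cS(\bR^d)$ in $L_p(\bR^d,w\,dx)$ for $w\in A_p(\bR^d)$.
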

\begin{proof}
By the definition of $S_0$, there exists a $\Phi\in\cS(\bR^d)$ such that
$$
S_0f(x)=\Phi\ast f(x)=\int_{B_1(0)}f(x-y)\Phi(y)\mathrm{d}y+\sum_{k=1}^{\infty}\int_{B_{2^k}(0)\setminus\overline{B_{2^{k-1}}(0)}}f(x-y)\Phi(y)\mathrm{d}y.
$$
Since $\Phi\in\cS(\bR^d)$, there exists $N=N(d,\Phi)$ such that
$$
|\Phi(y)|+|y|^{d+1}|\Phi(y)|\leq N,\quad \forall y\in\bR^d.
$$
Therefore, $| S_0 f (x) | \leq N(d,\Phi)\bM f(x)$ and it leads to the first part of the inequality, \textit{i.e.}
$$
\|S_0f\|_{L_p(\bR^d,w\,\mathrm{d}x)}\leq N\|f\|_{L_p(\bR^d,w\,\mathrm{d}x)},\quad \forall f\in L_p(\bR^d,w\,\mathrm{d}x)
$$
due to the weighted Hardy--Littlewood Theorem. 
Next recall
$$
    \Delta_jf=f\ast\Psi_j=f \ast 2^{jd}\Psi(2^{j}\cdot),
    $$
and we show that $\Delta_j$ is a Calder\'on--Zygmund operator to obtain
        \begin{equation}
    \label{22.04.11.14.12}
        \sup_{j\in\bZ}\|\Delta_jf\|_{L_p(\bR^d,w\,\mathrm{d}x)}\leq N\|f\|_{L_p(\bR^d,w\,\mathrm{d}x)}.
    \end{equation}
In other words, we have to show that $f \to \Delta_j f$ is bounded on $L_2(\bR^d)$ and $\Psi_j(x-y)$ is a standard kernel. 
It is obvious that 
$$
\sup_{\xi \in \bR^d} |\cF[\Psi_j](\xi)|=\sup_{\xi \in \bR^d}|\cF[\Psi](2^{-j} \xi)| = \sup_{\xi \in \bR^d}|\cF[\Psi](\xi)|,
$$
which implies that $f \to \Delta_jf$ becomes a bounded operator on $L_2(\bR^d)$  due to the Plancherel theorem.
More precisely, we have
\begin{align}
								\label{20230128 30}
\|\Delta_j f\|_{L_2(\bR^d)} \leq \frac{1}{ (2\pi)^{d/2} } \sup_{\xi \in \bR^d}|\cF[\Psi](\xi)| \|f\|_{L_2(\bR^d)}
\end{align}
    where $\Psi_j=2^{jd}\Psi(2^{j}\cdot)$ and $\Psi\in\cS(\bR^d)$.
Next, we show that $\Psi_j(x-y)$ is a standard kernel.
   Observe that there exists a $N=N(d,\Psi)$ such that
    \begin{align}
								\label{20230128 10}
    |x|^d|\Psi_j(x)|+|x|^{d+1}|\nabla\Psi_j(x)|=|2^jx|^d|\Psi(2^{j}x)|+|2^jx|^{d+1}|\nabla\Psi(2^jx)|\leq N,\quad \forall x\in\bR^d.
    \end{align}
    Moreover, by the fundamental theorem of calculus,
    $$
    |\Psi_j(x)-\Psi_j(y)|\leq |x-y|\int_0^1|\nabla \Psi_j(\theta x+(1-\theta)y)|\mathrm{d}\theta.
    $$
    If $2|x-y|\leq\max(|x|,|y|)$, then for $\theta\in[0,1]$,
    $$
    |\theta x+(1-\theta)y|\geq \max(|x|,|y|)-|x-y|\geq \frac{\max(|x|,|y|)}{2}\geq \frac{|x|+|y|}{4}.
    $$
    Hence,
    \begin{align}
							\label{20230128 11}
    |\Psi_j(x)-\Psi_j(y)| \leq |x-y|\int_0^1|\nabla \Psi_j(\theta x+(1-\theta)y)|d\theta\leq \frac{N4^{d+1}|x-y|}{(|x|+|y|)^{d+1}}.
    \end{align}
Due to \eqref{20230128 10} and \eqref{20230128 11}, $\Psi_j(x-y)$ becomes a standard kernel with constant $1$ and $N$.
Especially, $N$ can be chosen uniformly for all $j \in \bZ$.
In total, the $L_2$-bounds are given uniformly for all $j \in \bZ$ in \eqref{20230128 30} and $\Psi_j(x-y)$ becomes a standard kernel with the same parameters for all $j \in \bZ$.
Therefore applying \cite[Theorem 7.4.6]{grafakos2014classical}, we have \eqref{22.04.11.14.12}. The lemma is proved.
\end{proof}
\begin{rem}\label{rem RdF vv}
    An operator $T:L_p(\bR^d)\to L_p(\bR^d)$ is linearizable if there exist a Banach space $B$ and a $B$-valued linear operator $U:L_p(\bR^d)\to L_p(\bR^d;B)$ such that
    $$
        |Tf(x)| = \| Uf(x)\|_B,\quad f\in L_p(\bR^d).
    $$
    Hence, any linear operator is linearizable.
    
    Let $\{T_j\}_{j\in\bZ}$ be a sequence of linearizable operators and $K \in (0,\infty)$.
        Assume that there exist a $r \in (1,\infty)$ and $C(K) \in (0,\infty)$ such that 
    \begin{equation}
    \label{22.09.13.16.42}
        \sup_{j\in\bZ}\int_{\bR^d} | T_j f(x)|^r w(x) \mathrm{d}x \leq C(K) \int_{\bR^d} |f(x)|^r w(x)\mathrm{d}x
    \end{equation}
for all $[w]_{A_r} \leq K$ and $f\in L_r(\bR^d,w\,\mathrm{d}x)$.
   Then for all $1<p,q<\infty$, there exist $K' \in (0,\infty)$ and $C$ such that
    \begin{align}
										\label{20230128 40}
        \Big\| \Big( \sum_{j\in\bZ} |T_j f_j|^p\Big)^{1/p} \Big\|_{L_q(\bR^d,w'\,\mathrm{d}x)} \leq C\Big\| \Big( \sum_{j\in\bZ} |f_j|^p\Big)^{1/p} \Big\|_{L_q(\bR^d,w'\,\mathrm{d}x)}
    \end{align}
for all $[w']_{A_q(\bR^d)}\leq K'$ and $f_j \in L_q(\bR^d,w'\,\mathrm{d}x)$. The above statement can be found, for instance, in \cite[p. 521, Remarks 6.5]{RdF1985weighted} with a modification to the dependence of $A_p$-norms. 
In particular, since $\Delta_j$ is linear and satisfies the weighted inequality for any $r>1$ due to Lemma \ref{wbound}, we have \eqref{20230128 40}
with $T_j=\Delta_j$, and it will be used in the proof of the next proposition. 
\end{rem}

For $\boldsymbol{r}:\bZ\to(0,\infty)$, we denote
$$
\pi_{\boldsymbol{r}}:=\sum_{j\in\bZ}2^{\boldsymbol{r}(j)}\Delta_j~\text{and}~ \pi^{\boldsymbol{r}}:=S_0+\sum_{j=1}^{\infty}2^{\boldsymbol{r}(j)}\Delta_j.
$$

\begin{prop}
			\label{22.05.03.11.34}
Let $p\in(1,\infty)$, $q\in(0,\infty)$, and $w\in A_p(\bR^d)$. Suppose that two sequences $\boldsymbol{r},\boldsymbol{r}':\bZ\to(-\infty,\infty)$ satisfy
\begin{align}
							\label{uniform r assumption}
\sup_{j\in\bZ}\left(|\boldsymbol{r}'(j+1)-\boldsymbol{r}'(j)|+|\boldsymbol{r}(j+1)-\boldsymbol{r}(j)|\right)=:C_0<\infty.
\end{align}
Then for any $f\in \cS(\bR^d)$,
    \begin{equation*}
        \begin{gathered}
        \|\pi^{\boldsymbol{r}}f\|_{H_p^{\boldsymbol{r}'}(\bR^d,w\,\mathrm{d}x)}\approx \|f\|_{H_p^{\boldsymbol{r}+\boldsymbol{r}'}(\bR^d,w\,\mathrm{d}x)},\quad \|\pi^{\boldsymbol{r}}f\|_{B_{p,q}^{\boldsymbol{r}'}(\bR^d,w\,\mathrm{d}x)}\approx\|f\|_{B_{p,q}^{\boldsymbol{r}+\boldsymbol{r}'}(\bR^d,w\,\mathrm{d}x)},\\
        \|\pi_{\boldsymbol{r}}f\|_{\dot{H}_p^{\boldsymbol{r}'}(\bR^d,w\,\mathrm{d}x)}\approx \|f\|_{\dot{H}_p^{\boldsymbol{r}+\boldsymbol{r}'}(\bR^d,w\,\mathrm{d}x)},\quad 
    \|\pi_{\boldsymbol{r}}f\|_{\dot{B}_{p,q}^{\boldsymbol{r}'}(\bR^d,w\,\mathrm{d}x)}\approx\|f\|_{\dot{B}_{p,q}^{\boldsymbol{r}+\boldsymbol{r}'}(\bR^d,w\,\mathrm{d}x)},
        \end{gathered}
    \end{equation*}
where  the equivalences depend only on $p$, $q$, $d$, $C_0$ and $[w]_{A_p}(\bR^d)$.
In particular, we have
    $$
        \|\pi^{\boldsymbol{r}}f\|_{L_p(\bR^d,w\,\mathrm{d}x)}\approx \|f\|_{H_p^{\boldsymbol{r}}(\bR^d,w\,\mathrm{d}x)}.
    $$
\end{prop}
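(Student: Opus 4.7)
The plan is to prove all four equivalences by a common scheme exploiting (a) the near-orthogonality $\Delta_k\Delta_m=0$ whenever $|k-m|\geq 2$, (b) the controlled-difference bound \eqref{uniform r assumption}, and (c) the weighted Mikhlin/Littlewood--Paley machinery of Proposition \ref{21.02.24.16.49}, Lemma \ref{wbound}, and its vector-valued extension in Remark \ref{rem RdF vv}. I will focus on the homogeneous Bessel-potential case $\|\pi_{\boldsymbol{r}} f\|_{\dot H_p^{\boldsymbol{r}'}(w)}\approx \|f\|_{\dot H_p^{\boldsymbol{r}+\boldsymbol{r}'}(w)}$; the Besov analogues follow by the same steps with $\ell^q$ replacing $\ell^2$ (and Remark \ref{rem RdF vv} replaced by plain Minkowski plus the scalar bound of Lemma \ref{wbound}), while the inhomogeneous counterparts only add a separate bound on the $S_0 f$ piece, again from Lemma \ref{wbound}. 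The ``in particular'' assertion will then come from the main equivalence with $\boldsymbol{r}'\equiv 0$ together with $\|g\|_{L_p(\bR^d,w\,dx)}\approx \|g\|_{H_p^{\boldsymbol{0}}(\bR^d,w\,dx)}$, which in turn follows from Proposition \ref{prop:WLP} and Lemma \ref{wbound}.

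For the forward direction the support property yields
\[
\Delta_k(\pi_{\boldsymbol{r}} f)=\sum_{i=-1}^{1}2^{\boldsymbol{r}(k+i)}\Delta_k\Delta_{k+i} f,
\]
and \eqref{uniform r assumption} gives $2^{\boldsymbol{r}(k+i)}\leq 2^{C_0}2^{\boldsymbol{r}(k)}$ for $|i|\leq 1$. I would then multiply by $2^{\boldsymbol{r}'(k)}$, take the $\ell^2_k$-norm and the $L_p(\bR^d,w\,dx)$-norm, use Minkowski in $\ell^2$ to separate the three values of $i$, reindex $l=k+i$, and invoke \eqref{uniform r assumption} once more to replace $2^{(\boldsymbol{r}+\boldsymbol{r}')(l-i)}$ by $2^{(\boldsymbol{r}+\boldsymbol{r}')(l)}$ up to a bounded factor. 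The remaining quantity has the form $\big\|\big(\sum_l|\Delta_{l-i}h_l|^2\big)^{1/2}\big\|_{L_p(\bR^d,w\,dx)}$ with $h_l=2^{(\boldsymbol{r}+\boldsymbol{r}')(l)}\Delta_l f$. Since $\Delta_j$ is linear (hence linearizable) and uniformly bounded on $L_r(\bR^d,w'\,dx)$ for every $r\in(1,\infty)$ and every $w'\in A_r(\bR^d)$ by Lemma \ref{wbound}, Remark \ref{rem RdF vv} controls this expression by $\big\|\big(\sum_l|h_l|^2\big)^{1/2}\big\|_{L_p(\bR^d,w\,dx)}=\|f\|_{\dot H_p^{\boldsymbol{r}+\boldsymbol{r}'}}$, closing the forward direction.

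The reverse inequality is the delicate step, and the plan is to invert $\pi_{\boldsymbol{r}}$ block-by-block. The symbol $M(\xi):=\sum_k 2^{\boldsymbol{r}(k)}\cF[\Psi](2^{-k}\xi)$ is at each $\xi$ a sum of at most three terms, so \eqref{uniform r assumption} produces the two-sided local comparison $2^{\boldsymbol{r}(k)-C_0}\lesssim M(\xi)\lesssim 2^{\boldsymbol{r}(k)+C_0}$ on $\{|\xi|\approx 2^k\}$. Setting
\[
n_k(\xi):=\frac{2^{\boldsymbol{r}(k)}\cF[\Psi](2^{-k}\xi)}{M(\xi)},\qquad T_k:=\cF^{-1}n_k\cF,
\]
the identity $\hat f=\widehat{\pi_{\boldsymbol{r}} f}/M$ gives $2^{\boldsymbol{r}(k)+\boldsymbol{r}'(k)}\Delta_k f=2^{\boldsymbol{r}'(k)}T_k(\pi_{\boldsymbol{r}} f)$. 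A Leibniz-rule computation using $|D^\beta\cF[\Psi](2^{-l}\cdot)|\lesssim 2^{-l|\beta|}\mathbf{1}_{|\xi|\approx 2^l}$ and the local bounds on $M$ will show that $n_k$ satisfies the Mikhlin condition \eqref{22.05.12.13.14} with a constant depending only on $C_0$ and $d$, i.e.\ uniformly in $k$. Proposition \ref{21.02.24.16.49} then furnishes a uniform scalar weighted $L_r$-bound for $T_k$, and using $n_k\cdot\sum_{i=-1}^{1}\cF[\Psi](2^{-(k+i)}\xi)=n_k$ on the support of $n_k$ to write $T_k(\pi_{\boldsymbol{r}} f)=\sum_i T_k\Delta_{k+i}(\pi_{\boldsymbol{r}} f)$, the same vector-valued argument from the forward direction (Remark \ref{rem RdF vv} applied to the family $\{T_k\}_k$) delivers $\|f\|_{\dot H_p^{\boldsymbol{r}+\boldsymbol{r}'}}\lesssim \|\pi_{\boldsymbol{r}} f\|_{\dot H_p^{\boldsymbol{r}'}}$. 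The hardest part will be this inversion: the individual symbols $2^{\pm\boldsymbol{r}(j)}\cF[\Psi](2^{-j}\xi)$ grow with $j$ and are \emph{not} Mikhlin multipliers, and it is only after forming the normalized ratio $n_k$ that the controlled-difference hypothesis forces cancellation between the growing prefactor and the growing denominator and yields a $k$-independent Mikhlin bound.
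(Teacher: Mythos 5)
Your homogeneous half is essentially correct, and its reverse (square-function) step is a genuinely different route from the paper's: you invert $\pi_{\boldsymbol{r}}$ block by block through the normalized ratios $n_k=2^{\boldsymbol{r}(k)}\cF[\Psi](2^{-k}\cdot)/M$, which are Mikhlin multipliers uniformly in $k$ because on $\mathrm{supp}\,\cF[\Psi](2^{-k}\cdot)$ every term of $M$ is comparable to $2^{\boldsymbol{r}(k)}$ within $2^{\pm2C_0}$, and you then feed the family $\{T_k\}$ into the vector-valued extrapolation of Remark \ref{rem RdF vv}. The paper instead works with the weighted Littlewood--Paley theorem (Proposition \ref{prop:WLP}), Khintchine's inequality and a single randomized multiplier $\Pi_1$ built from Rademacher signs, reserving block-ratio multipliers (its $\Pi_2,\Pi_3,\Pi_{j+2}$) for the Besov step; your version avoids randomization at the price of using Remark \ref{rem RdF vv} for a family of distinct operators, which its hypotheses do permit. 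Two cosmetic points: for $q\in(0,1)$ replace ``plain Minkowski'' by the $q$-quasi-triangle inequality (harmless, only the shifts $i=-1,0,1$ occur), and read $\widehat{\pi_{\boldsymbol{r}}f}=M\widehat{f}$ on $\bR^d\setminus\{0\}$.

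The genuine gap is the sentence claiming the inhomogeneous counterparts ``only add a separate bound on the $S_0f$ piece, again from Lemma \ref{wbound}.'' Lemma \ref{wbound} gives $\|S_0g\|_{L_p(\bR^d,w\,dx)}\lesssim\|g\|_{L_p(\bR^d,w\,dx)}$, which only serves the forward direction. In the reverse direction you must recover $S_0f$ and the blocks $2^{\boldsymbol{r}(1)}\Delta_1f$, $2^{\boldsymbol{r}(2)}\Delta_2f$ from $\pi^{\boldsymbol{r}}f$, i.e.\ invert the symbol $\cF[\Phi]+\sum_{j\geq1}2^{\boldsymbol{r}(j)}\cF[\Psi](2^{-j}\cdot)$ across the junction $|\xi|\sim1$, where the unweighted coefficient $1$ of $\cF[\Phi]$ sits next to $2^{\boldsymbol{r}(1)}$; the size of $2^{\boldsymbol{r}(1)}$ relative to $1$ is not controlled by \eqref{uniform r assumption}, so the uniform-Mikhlin computation that powered your homogeneous ratios does not transfer verbatim to the junction blocks, and some genuine multiplier work à la the paper's $\Pi_0$ (for $S_0$) and $\Pi_2,\Pi_3$ via Proposition \ref{21.02.24.16.49} and \eqref{22.05.12.13.14} is required. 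This is precisely where the paper spends most of its proof, and it is not optional for you: the ``in particular'' equivalence $\|\pi^{\boldsymbol{r}}f\|_{L_p(\bR^d,w\,dx)}\approx\|f\|_{H_p^{\boldsymbol{r}}(\bR^d,w\,dx)}$, the one actually invoked in the paper's main theorems, is an inhomogeneous statement, so as written your proposal leaves the inhomogeneous equivalences (and hence that corollary) unproven.
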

\begin{proof}
Due to the existence of $S_0$, the proof of the inhomogeneous case becomes more difficult. 
Even the case $\boldsymbol{r}'\neq\boldsymbol{0}$ is quite similar to the case $\boldsymbol{r}'=\boldsymbol{0}$, where
$\boldsymbol{0}(j)=0$ for all $j\in\bZ$.
Thus, we only prove the inhomogeneous case with the assumption $\boldsymbol{r}'=\boldsymbol{0}$.

First, by the definition of $\pi^{\boldsymbol{r}}$ and the triangle inequality, it is obvious that
    \begin{align}
								\label{20230128 50}
        \|\pi^{\boldsymbol{r}} f\|_{L_p(\bR^d,w\,\mathrm{d}x)} 
        \leq \| S_0 f\|_{L_p(\bR^d,w\,\mathrm{d}x)} + \Big\| \sum_{j=1}^{\infty}2^{\boldsymbol{r}(j)}\Delta_j f \Big\|_{L_p(\bR^d,w\,\mathrm{d}x)}.
    \end{align}
    For the converse inequality of \eqref{20230128 50}, we use Proposition \ref{21.02.24.16.49}. One can easily check that
    $$
    \Pi_0(\xi):=\frac{\cF[\Phi](\xi)}{\cF[\Phi](\xi)+\sum_{j=1}^{\infty}2^{\boldsymbol{r}(j)}\cF[\Psi](2^{-j}\xi)}
    $$
    is infinitely differentiable, $\Pi_0(\xi)=1$ if $|\xi|\leq 1$ and $\Pi_0(\xi)=0$ if $|\xi|\geq2$, where
    $$
    \cF[\Phi](\xi) = \sum_{j=-\infty}^{0}\cF[\Psi](2^{-j}\xi).
    $$
    This certainly implies that $\Pi_0$ satisfies \eqref{22.05.12.13.14}. Hence by Proposition \ref{21.02.24.16.49},
    $$
    \|S_0f\|_{L_p(\bR^d,w\,\mathrm{d}x)}=\|\bT_{\Pi_0}\pi^{\boldsymbol{r}}f\|_{L_p(\bR^d,w\,\mathrm{d}x)}\leq N\|\pi^{\boldsymbol{r}}f\|_{L_p(\bR^d,w\,\mathrm{d}x)},
    $$
where 
$$
\bT_{\Pi_0}f(x):=\cF^{-1}[\Pi_0\cF[f]](x).
$$
    This also yields
    $$
    \Big\| \sum_{j=1}^{\infty}2^{\boldsymbol{r}(j)}\Delta_j f \Big\|_{L_p(\bR^d,w\,\mathrm{d}x)}=\|\pi^{\boldsymbol{r}}f-S_0f\|_{L_p(\bR^d,w\,\mathrm{d}x)}\leq N\|\pi^{\boldsymbol{r}}f\|_{L_p(\bR^d,w\,\mathrm{d}x)}
    $$
    and thus
\begin{align}
								\label{20230128 51}
    \|\pi^{\boldsymbol{r}} f\|_{L_p(\bR^d,w\,\mathrm{d}x)} 
        \approx \| S_0 f\|_{L_p(\bR^d,w\,\mathrm{d}x)} + \Big\| \sum_{j=1}^{\infty}2^{\boldsymbol{r}(j)}\Delta_j f \Big\|_{L_p(\bR^d,w\,\mathrm{d}x)}.
\end{align}
Moreover,   Proposition \ref{prop:WLP} implies 
    \begin{align}
									\label{20230128 52}
       \Big\| S\Big( \sum_{j=1}^{\infty}2^{\boldsymbol{r}(j)}\Delta_j f \Big) \Big\|_{L_p(\bR^d,w\,\mathrm{d}x)}
        \approx
        \Big\| \sum_{j=1}^{\infty}2^{\boldsymbol{r}(j)}\Delta_j f \Big\|_{L_p(\bR^d,w\,\mathrm{d}x)},
    \end{align}
    where the implicit constant depends only on $p$, $d$ and $[w]_{A_p(\bR^d)}$. 
    Therefore by \eqref{20230128 51} and \eqref{20230128 52},
    \begin{align*}
        \|\pi^{\boldsymbol{r}} f\|_{L_p(\bR^d,w\,\mathrm{d}x)} \approx \| S_0 f\|_{L_p(\bR^d,w\,\mathrm{d}x)} + \Big\| S\Big( \sum_{j=1}^{\infty}2^{\boldsymbol{r}(j)}\Delta_j f \Big) \Big\|_{L_p(\bR^d,w\,\mathrm{d}x)}.
    \end{align*}
Next we claim 
    \begin{equation}\label{ineq 220718 1747}
        \begin{aligned}
            \Big\| S\Big( \sum_{j=1}^{\infty}2^{\boldsymbol{r}(j)}\Delta_j f \Big) \Big\|_{L_p(\bR^d,w\,\mathrm{d}x)} 
            \leq N\Big\| \Big( \sum_{j=1}^{\infty} | 2^{\boldsymbol{r}(j)}\Delta_j f |^2 \Big)^{1/2} \Big\|_{L_p(\bR^d,w\,\mathrm{d}x)}.
        \end{aligned}
    \end{equation}
Put $T_j:=\Delta_j$ for all $j\in\bZ$ and
    \begin{equation*}
        \begin{gathered} f_0:=2^{\boldsymbol{r}(1)}\Delta_1f,\quad f_1:=2^{\boldsymbol{r}(1)}\Delta_1f+2^{\boldsymbol{r}(2)}\Delta_2f\\ f_j:=(2^{\boldsymbol{r}(j-1)}\Delta_{j-1}+2^{\boldsymbol{r}(j)}\Delta_{j}+2^{\boldsymbol{r}(j+1)}\Delta_{j+1})f\quad \text{if } j\geq2,\quad f_j:=0 \quad \text{if } j<0.
        \end{gathered}
    \end{equation*}
Considering the almost orthogonal property of $\Delta_j$ and Remark \ref{rem RdF vv}, we have
    \begin{align*}
        \Big\| S\Big( \sum_{j=1}^{\infty}2^{\boldsymbol{r}(j)}\Delta_j f \Big) \Big\|_{L_p(\bR^d,w\,\mathrm{d}x)} 
        &=\Big\| \Big( \sum_{j\in\bZ}|T_jf_j|^2 \Big)^{1/2} \Big\|_{L_p(\bR^d,w\,\mathrm{d}x)}\\
        &\leq N\Big\| \Big( \sum_{j=1}^{\infty} | 2^{\boldsymbol{r}(j)}\Delta_j f |^2 \Big)^{1/2} \Big\|_{L_p(\bR^d,w\,\mathrm{d}x)}.
    \end{align*}
    For the converse of \eqref{ineq 220718 1747}, we make use of the Khintchine inequality (\textit{e.g.} see \cite{Haagerup1981}),
    \begin{align}\label{ineq 220718 1734}
        \Big( \sum_{j=1}^{\infty} \big| 2^{\boldsymbol{r}(j)}\Delta_j f \big|^2 \Big)^{1/2}
        \approx \Big( \mathbb{E} \Big|\sum_{j=1}^{\infty}  X_j 2^{\boldsymbol{r}(j)}\Delta_j f \Big|^p \Big)^{1/p},\quad p \in (0, \infty)
    \end{align}
    where the implicit constants depend only on $p$ and $\{X_j\}_{j\in\bN}$ is a sequence of independent and identically distributed random variables with the Rademacher distribution. Let $\Pi_1$ be a Fourier multiplier defined by
    $$
    \Pi_1(\xi) := 
    \frac{ \sum_{j=1}^\infty X_j 2^{\boldsymbol{r}(j)}\cF[\Psi](2^{-j}\xi)}{\cF[\Phi](\xi)+\sum_{j=1}^{\infty} 2^{\boldsymbol{r}(j)}\cF[\Psi](2^{-j}\xi)}.
    $$
Recall the notation
$$
\bT_{\Pi_1}f(x):=\cF^{-1}[\Pi_1\cF[f]](x)
$$
and
\begin{align*}
\pi^{\boldsymbol{r}}:=S_0+\sum_{j=1}^{\infty}2^{\boldsymbol{r}(j)}\Delta_j.
\end{align*}
Then the right-hand side of \eqref{ineq 220718 1734} equals to 
    \begin{align*}
        \left( \bE \left| \bT_{\Pi_1}(\pi^{\boldsymbol{r}}f) \right|^p \right)^{1/p}.
    \end{align*}
It is easy to check that $\Pi_1$ satisfies \eqref{22.05.12.13.14}, that is, $\Pi_1$ is a weighted Mikhlin multiplier.
    Thus, it follows that 
    \begin{equation}\label{ineq 220718 1744}
    \begin{aligned}
        \Big\| \Big( \mathbb{E} \Big|\sum_{j=1}^{\infty}  X_j 2^{\boldsymbol{r}(j)}\Delta_j f \Big|^p \Big)^{1/p}  \Big\|_{L_p(\bR^d,w\,\mathrm{d}x)}
        &\leq N\Big( \mathbb{E}  \Big\|\sum_{j=1}^{\infty}  X_j 2^{\boldsymbol{r}(j)}\Delta_j f \Big\|_{L_p(\bR^d,w\,\mathrm{d}x)}^p \Big)^{1/p}\\
        &= N\Big( \mathbb{E}  \Big\| \bT_{\Pi_1}(\pi^{\boldsymbol{r}}f) \Big\|_{L_p(\bR^d,w\,\mathrm{d}x)}^p \Big)^{1/p} 
        \\
        &\leq N [w]_{A_p(\bR^d)}^{\max(1,(p-1)^{-1})}\|\pi^{\boldsymbol{r}} f\|_{L_p(\bR^d,w\,\mathrm{d}x)}.
    \end{aligned}
    \end{equation}
    By \eqref{ineq 220718 1747}, \eqref{ineq 220718 1734} and \eqref{ineq 220718 1744}, we conclude that 
    $$
        \|\pi^{\boldsymbol{r}}f\|_{L_p(\bR^d,w\,\mathrm{d}x)}\approx \|f\|_{H_p^{\boldsymbol{r}}(\bR^d,w\,\mathrm{d}x)}.
    $$
    Similarly, we compute $\|\pi^{\boldsymbol{r}}f\|_{B_{p,q}^{\boldsymbol{0}}(\bR^d,w\,\mathrm{d}x)}$.
Recall the definition first:
    \begin{align*}
        \|\pi^{\boldsymbol{r}}f\|_{B_{p,q}^{\boldsymbol{0}}(\bR^d,w\,\mathrm{d}x)}
        &= \| S_0 \pi^{\boldsymbol{r}}f \|_{L_p(\bR^d,w\,\mathrm{d}x)} + \Big( \sum_{j=1}^{\infty} \Big\| \Delta_j \pi^{\boldsymbol{r}}f \Big\|_{L_p(\bR^d,w\,\mathrm{d}x)}^q \Big)^{1/q}
    \end{align*}
and
    \begin{align*}
        \|f\|_{B_{p,q}^{\boldsymbol{r}}(\bR^d,w\,\mathrm{d}x)}
        &= \| S_0 f \|_{L_p(\bR^d,w\,\mathrm{d}x)} + \Big( \sum_{j=1}^{\infty} \Big\| 2^{\boldsymbol{r}(j)}\Delta_j f \Big\|_{L_p(\bR^d,w\,\mathrm{d}x)}^q \Big)^{1/q}.
    \end{align*}
Due to the almost orthogonal property again, it is obvious that
    \begin{equation*}
        \begin{gathered}
        S_0\pi^{\boldsymbol{r}}f=S_0(S_0+2^{\boldsymbol{r}(1)}\Delta_1)f, \\
        \Delta_1\pi^{\boldsymbol{r}}f=\Delta_1(S_0+2^{\boldsymbol{r}(1)}\Delta_1+2^{\boldsymbol{r}(2)}\Delta_2)f, \\
        \Delta_j \pi^{\boldsymbol{r}}f = \Delta_{j}(2^{\boldsymbol{r}(j-1)} \Delta_{j-1} +2^{\boldsymbol{r}(j)} \Delta_{j} +2^{\boldsymbol{r}(j+1)} \Delta_{j+1}) f,\quad j\geq 2.
        \end{gathered}
    \end{equation*}
Moreover, setting
    \begin{equation*}
        \begin{gathered}
\Pi_2:=\frac{\cF[\Phi]+\cF[\Psi](2^{-1}\cdot)}{\cF[\Phi]+\sum_{j=1}^{\infty}2^{\boldsymbol{r}(j)}\cF[\Psi](2^{-j}\cdot)},\\
 \Pi_3:=\frac{2^{\boldsymbol{r}(1)}(\cF[\Phi]+\cF[\Psi](2^{-1}\cdot)+\cF[\Psi](2^{-2}\cdot))}{\cF[\Phi]+\sum_{j=1}^{\infty}2^{\boldsymbol{r}(j)}\cF[\Psi](2^{-j}\cdot)},\\
 \Pi_{j+2}:=\frac{2^{\boldsymbol{r}(j)}(\cF[\Psi](2^{-j+1}\cdot)+\cF[\Psi](2^{-j}\cdot)+\cF[\Psi](2^{-j-1}\cdot))}{\cF[\Phi]+\sum_{j=1}^{\infty}2^{\boldsymbol{r}(j)}\cF[\Psi](2^{-j}\cdot)},\quad j\geq 2,
        \end{gathered}
    \end{equation*}
we have
    \begin{equation*}
        \begin{gathered}
        S_0f=\bT_{\Pi_2}S_0\pi^{\boldsymbol{r}}f,
        \quad 2^{\boldsymbol{r}(1)}\Delta_1f=\bT_{\Pi_3}\Delta_1\pi^{\boldsymbol{r}}f,\\
        2^{\boldsymbol{r}(j)}\Delta_jf=\bT_{\Pi_{j+2}}\Delta_j \pi^{\boldsymbol{r}}f, \quad j\geq 2.
        \end{gathered}
    \end{equation*}
Therefore, by Lemma \ref{wbound},  and Proposition \ref{21.02.24.16.49}, we obtain
    $$
        \|\pi^{\boldsymbol{r}}f\|_{B_{p,q}^{\boldsymbol{0}}(\bR^d,w\,\mathrm{d}x)}\approx\|f\|_{B_{p,q}^{\boldsymbol{r}}(\bR^d,w\,\mathrm{d}x)}
    $$
since all $L_p$-bounds can be chosen uniformly for all $j$ due to the assumption on $\boldsymbol{r}$ \eqref{uniform r assumption}.
    The proposition is proved.
\end{proof}

\begin{corollary}
							\label{classical sobo}
   For $s\in\bR$, put $\boldsymbol{r}(j) = sj$. 
Then 
$$H_p^{\boldsymbol{r}}(\bR^d, w\,\mathrm{d}x)=H_p^s(\bR^d, w\,\mathrm{d}x)
$$
whose norm is given by 
$$
    \| f \|_{H_p^s(\bR^d,w\,\mathrm{d}x)} := \| (1 - \Delta)^{s/2} f \|_{L_p(\bR^d,w\,\mathrm{d}x)}.
$$
\end{corollary}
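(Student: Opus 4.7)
\medskip

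\noindent\textbf{Proof plan for Corollary \ref{classical sobo}.} The plan is to reduce the statement to the last assertion of Proposition \ref{22.05.03.11.34} combined with the weighted Mikhlin multiplier theorem (Proposition \ref{21.02.24.16.49}). With $\boldsymbol{r}(j)=sj$, the sequence $\boldsymbol{r}$ has a controlled difference $\|\boldsymbol{r}\|_d=|s|$, so Proposition \ref{22.05.03.11.34} applies and yields
\begin{equation*}
\|\pi^{\boldsymbol{r}}f\|_{L_p(\bR^d,w\,dx)}\approx \|f\|_{H_p^{\boldsymbol{r}}(\bR^d,w\,dx)},
\end{equation*}
where $\pi^{\boldsymbol{r}}=S_0+\sum_{j=1}^{\infty}2^{sj}\Delta_j$. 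Hence it suffices to prove
\begin{equation*}
\|\pi^{\boldsymbol{r}}f\|_{L_p(\bR^d,w\,dx)}\approx \|(1-\Delta)^{s/2}f\|_{L_p(\bR^d,w\,dx)}.
\end{equation*}

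\medskip

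\noindent Next, I would identify the Fourier symbols of $\pi^{\boldsymbol{r}}$ and $(1-\Delta)^{s/2}$:
\begin{equation*}
m(\xi):=\cF[\Phi](\xi)+\sum_{j=1}^{\infty}2^{sj}\cF[\Psi](2^{-j}\xi),\qquad \rho_s(\xi):=(1+|\xi|^2)^{s/2},
\end{equation*}
and introduce the ratio multipliers
\begin{equation*}
\Pi_{4}(\xi):=\frac{m(\xi)}{\rho_s(\xi)},\qquad \Pi_{5}(\xi):=\frac{\rho_s(\xi)}{m(\xi)}.
\end{equation*}
The key claim is that both $\Pi_4$ and $\Pi_5$ satisfy the Mikhlin-type condition \eqref{22.05.12.13.14}. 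Granted this, Proposition \ref{21.02.24.16.49} gives the two-sided bound above, since
\begin{equation*}
(1-\Delta)^{s/2}f=\bT_{\Pi_5}(\pi^{\boldsymbol{r}}f),\qquad \pi^{\boldsymbol{r}}f=\bT_{\Pi_4}\bigl((1-\Delta)^{s/2}f\bigr),
\end{equation*}
at least for $f\in\cS(\bR^d)$, which is dense in both spaces under consideration.

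\medskip

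\noindent To verify \eqref{22.05.12.13.14} I would exploit the localized structure of $m$: on the support of $\cF[\Psi](2^{-j}\cdot)$, namely $\{2^{j-1}\leq|\xi|\leq 2^{j+1}\}$, at most three terms of the series defining $m(\xi)$ are nonzero, and $m(\xi)\approx 2^{sj}\approx \rho_s(\xi)$ (with constants independent of $j\geq 1$); on $\{|\xi|\leq 1\}$ we have $m(\xi)=\cF[\Phi](\xi)+2^{s}\cF[\Psi](\xi/2)\approx 1\approx \rho_s(\xi)$. Hence $\Pi_4,\Pi_5$ are bounded smooth functions on $\bR^d\setminus\{0\}$. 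For derivatives, using Leibniz's rule and the scaling $D^\alpha_\xi(\cF[\Psi](2^{-j}\xi))=2^{-j|\alpha|}(D^\alpha\cF[\Psi])(2^{-j}\xi)$ together with $|D^\alpha_\xi \rho_s(\xi)|\lesssim (1+|\xi|^2)^{(s-|\alpha|)/2}$, one verifies on each dyadic shell $\{R<|\xi|<2R\}$ the bound $|D^\alpha_\xi \Pi_i(\xi)|\lesssim R^{-|\alpha|}$ for $i=4,5$ and all $|\alpha|\leq d$; integrating this pointwise estimate yields \eqref{22.05.12.13.14}. This dyadic-shell verification is the only technical step, and it is routine; the main conceptual content has already been packaged in Propositions \ref{22.05.03.11.34} and \ref{21.02.24.16.49}, so no serious obstacle is expected.
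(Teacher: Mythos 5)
Your proposal is correct and follows essentially the same route as the paper: reduce via Proposition \ref{22.05.03.11.34} to comparing $\|\pi^{\boldsymbol{r}}f\|_{L_p(\bR^d,w\,dx)}$ with $\|(1-\Delta)^{s/2}f\|_{L_p(\bR^d,w\,dx)}$, and then apply the weighted Mikhlin theorem (Proposition \ref{21.02.24.16.49}) to the ratio multipliers $m/\rho_s$ and $\rho_s/m$, which are exactly the paper's $\Pi$ and $\widetilde{\Pi}$. Your dyadic-shell verification of condition \eqref{22.05.12.13.14} just spells out the step the paper labels as easy to check.
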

\begin{proof}
Recall the notation
$$
\pi^{\boldsymbol{r}}:=S_0+\sum_{j=1}^{\infty}2^{\boldsymbol{r}(j)}\Delta_j.
$$
By Proposition~\ref{22.05.03.11.34}, it follows that $\|f\|_{H_p^{\boldsymbol{r}}(\bR^d,w\,\mathrm{d}x)} \approx  \|\pi^{\boldsymbol{r}}f\|_{L_p(\bR^d,w\,\mathrm{d}x)}
$.
Thus, it is sufficient to show
\begin{align*}
\|\pi^{\boldsymbol{r}}f\|_{L_p(\bR^d,w\,\mathrm{d}x)} \approx \| (1 - \Delta)^{s/2} f \|_{L_p(\bR^d,w\,\mathrm{d}x)}.
\end{align*}
Let $\Psi$ be the Schwartz function appearing in the definition of $\Delta_j$. 
Define 
    $$
    \cF[\Phi](\xi) = \sum_{j=-\infty}^{0}\cF[\Psi](2^{-j}\xi).
    $$
and
$$
\Pi(\xi) := \frac{\cF[\Phi](\xi)+\sum_{j=1}^\infty 2^{sj} \cF[\Psi](2^{-j}\xi)}{(1+|\xi|^2)^{s/2}}.
$$
Then $\Pi$ is a weighted Mikhlin multiplier in Proposition~\ref{21.02.24.16.49}.
Thus we have
$$
    \left\|\sum_{j=1}^\infty 2^{\boldsymbol{r}(j)} \Delta_j f\right\|_{L_p(\bR^d,w\,\mathrm{d}x)} = \Big\| \bT_{\Pi} \Bigl( (1-\Delta)^{s/2} f \Bigr)\Big\|_{L_p(\bR^d,w\,\mathrm{d}x)} \leq N\| (1-\Delta)^{s/2} f\|_{L_p(\bR^d,w\,\mathrm{d}x)}.
$$
For the converse direction, it suffices to observe that
$$
\widetilde{\Pi}(\xi) = \frac{(1+|\xi|^2)^{s/2} }{\cF[\Phi](\xi)+\sum_{j=1}^\infty 2^{sj}\cF[\Psi](2^{-j}\xi)}
$$
becomes a weighted Mikhlin multiplier as well. the corollary is proved.
\end{proof}

\begin{defn}
Let $q\in(0,\infty)$, $\boldsymbol{r}:\bZ\to(-\infty,\infty)$, and $B$ be a Banach lattice.
\begin{enumerate}[(i)]
    \item We denote by $B(\ell_2^{\boldsymbol{r}})$ the set of all $B$-valued sequences $x=(x_0,x_1,\cdots)$ such that
$$
\|x\|_{B(\ell_2^{\boldsymbol{r}})}:=\|x_0\|_B+\left\|\Big(\sum_{j=1}^{\infty}2^{2\boldsymbol{r}(j)}|x_j|^2\Big)^{1/2}\right\|_B<\infty,\quad |x|:
=x\vee(-x).
$$
    \item We denote by $\ell_q^{\boldsymbol{r}}(B)$ the set of all $B$-valued sequences $x=(x_0,x_1,\cdots)$ such that
$$
\|x\|_{\ell_q^{\boldsymbol{r}}(B)}:=\|x_0\|_B+\Big(\sum_{j=1}^{\infty}2^{q\boldsymbol{r}(j)}\|x_j\|_{B}^q\Big)^{1/q}<\infty.
$$
In particular, we use the simpler notation $\ell_q^{\boldsymbol{r}}=\ell_q^{\boldsymbol{r}}(\bR)$.
\end{enumerate}

\end{defn}
\begin{defn}
Let $X$ and $Y$ be quasi-Banach spaces. 
We say that $X$ is a retract of $Y$ if there exist linear transformations $\frI:X\to Y$ and $\frP:Y\to X$ such that $\frP\frI$ is an identity operator in $X$.
\end{defn}

\begin{lem}
\label{22.04.19.16.55}
Let $p\in(1,\infty)$, $q\in(0,\infty)$, and $w\in A_p(\bR^d)$. 
Suppose that
\begin{equation}
\label{locsim}
    \sup_{j\in\bZ}|\boldsymbol{r}(j+1)-\boldsymbol{r}(j)|=:C_0<\infty.
\end{equation}
\begin{enumerate}[(i)]
    \item Then $H_p^{\boldsymbol{r}}(\bR^d,w\,\mathrm{d}x)$ is a retract of $L_p(\bR^d,w\,\mathrm{d}x)(\ell_2^{\boldsymbol{r}})$.
    \item Then $B_{p,q}^{\boldsymbol{r}}(\bR^d,w\,\mathrm{d}x)$ is a retract of $\ell_q^{\boldsymbol{r}}(L_p(\bR^d,w\,\mathrm{d}x))$.
\end{enumerate}

\end{lem}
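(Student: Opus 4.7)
The plan is to construct the retraction explicitly via Littlewood--Paley projections and their thickenings. Define
$$\frI: f \mapsto (S_0 f, \Delta_1 f, \Delta_2 f, \ldots)$$
and choose auxiliary Fourier multipliers $\tilde{S}_0$ and $\tilde{\Delta}_j$ whose symbols equal $1$ on the Fourier support of $S_0$ and $\Delta_j$ respectively, and whose symbols are smooth with support in a slightly enlarged annulus (so that $\tilde{\Delta}_j$ only interacts with $\Delta_k$ when $|k-j|\le 3$, and similarly for $\tilde{S}_0$). Concretely, take $\tilde{\Psi}\in C_c^\infty(\bR^d)$ with $\tilde{\Psi}\equiv 1$ on $\{1/2\le |\xi|\le 2\}$ and supported in $\{1/4\le |\xi|\le 4\}$, and set $\tilde{\Delta}_j$ to be the Fourier multiplier with symbol $\tilde{\Psi}(2^{-j}\cdot)$. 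Define
$$\frP(g_0, g_1, g_2, \ldots) := \tilde{S}_0 g_0 + \sum_{j\ge 1} \tilde{\Delta}_j g_j.$$
By construction $\tilde{S}_0 S_0 = S_0$ and $\tilde{\Delta}_j \Delta_j = \Delta_j$, so
$\frP \frI f = S_0 f + \sum_{j\ge 1} \Delta_j f = f$, which gives the retraction identity.

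The boundedness of $\frI$ is immediate from the definitions of the target and source norms. The main content is the boundedness of $\frP$. Fix $k\ge 1$; by Fourier support considerations, $\Delta_k \tilde{\Delta}_j = 0$ unless $|k-j|\le 3$, and a similar relation holds between $S_0$, $\tilde{S}_0$, and the $\tilde{\Delta}_j$. Consequently
$$\Delta_k \frP(g) = \sum_{j:\, |k-j|\le 3} \Delta_k \tilde{\Delta}_j g_j \quad \text{and} \quad S_0 \frP(g) = \sum_{j\le 4} S_0 \tilde{\Delta}_j g_j,$$
where I use the convention $\tilde{\Delta}_0 := \tilde{S}_0$. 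The symbols of $\Delta_k \tilde{\Delta}_j$ satisfy Mikhlin's condition \eqref{22.05.12.13.14} uniformly in $k$ and $j$ (they are dilations of a single Schwartz function times another), so Lemma \ref{wbound} and Proposition \ref{21.02.24.16.49} yield a uniform bound $\|\Delta_k \tilde{\Delta}_j g_j\|_{L_p(\bR^d, w\,dx)} \lesssim \|g_j\|_{L_p(\bR^d, w\,dx)}$.

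For part (ii), applying the quasi-triangle inequality for $\ell_q$ and the controlled-difference hypothesis \eqref{locsim} (which gives $2^{\boldsymbol{r}(k)} \approx 2^{\boldsymbol{r}(j)}$ whenever $|k-j|\le 3$), one obtains
$$\Big(\sum_{k\ge 1} 2^{q\boldsymbol{r}(k)} \|\Delta_k \frP(g)\|_{L_p(\bR^d, w\,dx)}^q \Big)^{1/q} \lesssim \Big(\sum_{j\ge 1} 2^{q\boldsymbol{r}(j)} \|g_j\|_{L_p(\bR^d, w\,dx)}^q \Big)^{1/q},$$
and $\|S_0\frP(g)\|_{L_p(\bR^d, w\,dx)}$ is handled by the finite-$j$ sum; this gives the required Besov bound. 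For part (i), I aggregate the pointwise square function: using the controlled-difference property and the finiteness of the band $|k-j|\le 3$,
$$\Big(\sum_{k\ge 1} 2^{2\boldsymbol{r}(k)} |\Delta_k \frP(g)|^2 \Big)^{1/2} \lesssim \sum_{|\ell|\le 3} \Big(\sum_{j\ge 1} 2^{2\boldsymbol{r}(j)} |\Delta_{j+\ell}\tilde{\Delta}_j g_j|^2 \Big)^{1/2}.$$
The right-hand side is then bounded in $L_p(\bR^d, w\,dx)$ by the vector-valued weighted estimate from Remark \ref{rem RdF vv} applied to the uniformly bounded family $\{\Delta_{j+\ell}\tilde{\Delta}_j\}_j$, yielding $\lesssim \|(g_j)\|_{L_p(\bR^d, w\,dx)(\ell_2^{\boldsymbol{r}})}$. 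The $S_0$ term is again a finite sum.

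The principal technical obstacle is the vector-valued inequality for the mixed operators $\Delta_{j+\ell}\tilde{\Delta}_j$, which is delicate because the multipliers depend on $j$; however, since all of these are dilations of a fixed Schwartz function, a single application of the vector-valued extrapolation framework of Remark \ref{rem RdF vv} (feeding in the $L_r(w)$-bound from Lemma \ref{wbound}) suffices. The controlled-difference hypothesis \eqref{locsim} is essential throughout to absorb the index shifts $k = j+\ell$, $|\ell|\le 3$, into the weight $2^{\boldsymbol{r}(\cdot)}$ without losing constants that depend on $j$.
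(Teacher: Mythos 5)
Your proposal is correct and takes essentially the same approach as the paper: the paper's retraction uses $\frI f=(S_0f,\Delta_1 f,\Delta_2 f,\dots)$ together with $\frP(\boldsymbol{f})=S_0(f_0+f_1)+\sum_{j\ge1}\sum_{l=-1}^{1}\Delta_j f_{j+l}$ (the original projections applied to a window of neighboring components) instead of your thickened multipliers $\tilde\Delta_j$ applied to $g_j$, which is only a cosmetic variant resting on the same almost-orthogonality. The boundedness of $\frP$ is then obtained exactly as you do it: uniform weighted $L_p(w)$ bounds for the band-limited pieces (Lemma \ref{wbound}, Proposition \ref{21.02.24.16.49}), the vector-valued extrapolation of Remark \ref{rem RdF vv} for the square-function case (i), and the controlled-difference condition \eqref{locsim} to absorb the index shifts into the weights $2^{\boldsymbol{r}(\cdot)}$.
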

\begin{proof}
We suggest a unified method to prove both (i) and (ii) simultaneously.
Let 
$$
(X,Y)=\left(H_p^{\boldsymbol{r}}(\bR^d,w\,\mathrm{d}x),L_p(\bR^d,w\,\mathrm{d}x)(\ell_2^{\boldsymbol{r}})\right)
$$
or 
$$
(X,Y)=\left(B_{p,q}^{\boldsymbol{r}}(\bR^d,w\,\mathrm{d}x),\ell_q^{\boldsymbol{r}}(L_p(\bR^d,w\,\mathrm{d}x))\right). 
$$
Consider two mappings:
\begin{equation*}
f \mapsto \frI(f):=(S_0f,\Delta_1f,\Delta_2f,\cdots)
\end{equation*}
and
\begin{equation*}
\boldsymbol{f}=(f_0,f_1,\cdots)\mapsto \frP(\boldsymbol{f}):=S_0(f_0+f_1)+\sum_{j=1}^{\infty}\sum_{l=-1}^1\Delta_{j}f_{j+l}.
\end{equation*}
By using the fact that $f= S_0(f) + \sum_{j=1}^\infty \Delta_j f$,
it is each to check that $\frP\frI$ is an identity operator in $X$. Moreover, due to the definitions of the function spaces,
\begin{align}
							\label{J norm equivalence}
\|f\|_{X}=\|\frI(f)\|_{Y}.
\end{align}
In other words, the target space of the mapping $f \mapsto \frI(f)$ is $Y$
and this implies that $\frI$ is a linear transformation from $X$ to $Y$ since the linearity of the mapping is obvious. 

Now, it only remains to show that $\frP$ is a linear transformation from $Y$ to $X$. 
Since the linearity is trivial as before, it is sufficient to show that the target space of the mapping 
$\boldsymbol{f}=(f_0,f_1,\cdots)\mapsto \frP(\boldsymbol{f})$ is $X$. 
By the almost orthogonality of the Littlewood--Paley projections,
\begin{equation}
\label{22.09.13.16.17}
    \begin{gathered}
     S_0\frP(\boldsymbol{f})=S_0f_0+S_0f_1+S_0\Delta_1f_{2},\\
     \Delta_1\frP(\boldsymbol{f})=\Delta_1(S_0+\Delta_1)f_0+\Delta_1f_1+\Delta_1(\Delta_1+\Delta_2)f_2+\Delta_1\Delta_2f_3,
    \end{gathered}
\end{equation}
and
\begin{equation}
    \begin{aligned}
    &\Delta_i\frP(\boldsymbol{f})\\
    =&\,\Delta_i\Delta_{i-1}f_{i-2}+\Delta_i(\Delta_{i-1}+\Delta_i)f_{i-1}+\Delta_if_i+\Delta_i(\Delta_i+\Delta_{i+1})f_{i+1}+\Delta_i\Delta_{i+1}f_{i+2}\\
    =:&T_i^1f_{i-2}+T_i^2f_{i-1}+T_i^3f_i+T_i^4f_{i+1}+T_i^5f_{i+2},\quad i\geq2.
    \end{aligned}
\end{equation}
The remaining part of the proof becomes slightly different depending on $Y=L_p(\bR^d,w\,\mathrm{d}x)(\ell_2^{\boldsymbol{r}})$ or $Y=\ell_q^{\boldsymbol{r}}(L_p(\bR^d,w\,\mathrm{d}x))$.
\begin{enumerate}[(i)]
\item Due to \eqref{22.09.13.16.17},
$$
\sum_{i=2}^{\infty}2^{2\boldsymbol{r}(i)}|\Delta_i\frP(\boldsymbol{f})|^2\leq N\sum_{k=1}^{5}\cI_k,
$$
where
$$
\cI_k=\sum_{i=2}^{\infty}|T_i^k(2^{\boldsymbol{r}(i)}f_{i+k-3})|^2,\quad k=1,2,3,4,5.
$$
By Lemma \ref{wbound}, the sequence of linear operators $\{T_j^k\}_{j=2}^{\infty}$ satisfies \eqref{22.09.13.16.42} for all $k=1,2,3,4,5$. Using Remark \ref{rem RdF vv} and \eqref{locsim}, we have
$$
\Big\|\Big(\sum_{i=2}^{\infty}2^{2\boldsymbol{r}(i)}|\Delta_i\frP(\boldsymbol{f})|^2\Big)^{1/2}\Big\|_{L_p(\bR^d,w\,\mathrm{d}x)}\leq N\|\boldsymbol{f}\|_{L_p(\bR^d,w\,\mathrm{d}x)(\ell_2^{\boldsymbol{r}})}.
$$
This implies that $\frP(\boldsymbol{f})\in H_p^{\boldsymbol{r}}(\bR^d,w\,\mathrm{d}x)$.

\item By Lemma \ref{wbound},
\begin{align}
							\label{2023012880}
 \|S_0\frP(\boldsymbol{f})\|_{L_p(\bR^d,w\,\mathrm{d}x)}\leq N \sum_{j=0}^{2}\|f_{j}\|_{L_p(\bR^d,w\,\mathrm{d}x)}
\end{align}
and
\begin{align}
							\label{2023012881}
 \|\Delta_i\frP(\boldsymbol{f})\|_{L_p(\bR^d,w\,\mathrm{d}x)}\leq N \sum_{j=-2}^{2}\|f_{i+j}\|_{L_p(\bR^d,w\,\mathrm{d}x)},\quad \forall i\in\bN,
\end{align}
where $N$ is independent of $i$ and $f_i=0$ if $i<0$. 
Moreover, due to \eqref{locsim},
\begin{equation}
\label{22.04.11.15.09}
    2^{\boldsymbol{r}(i)}\|\Delta_i\frP(\boldsymbol{f})\|_{L_p(\bR^d,w\,\mathrm{d}x)}\leq N\sum_{j=-2}^{2}2^{\boldsymbol{r}(i+j)}\|f_{i+j}\|_{L_p(\bR^d,w\,\mathrm{d}x)}
\end{equation}
and the constant $N$ is independent of $i$. 
Finally, recalling the definition of $B_{p,q}^{\boldsymbol{r}}(\bR^d,w\,\mathrm{d}x)$ and combining all \eqref{2023012880}, \eqref{2023012881}, and \eqref{22.04.11.15.09}, we have
\begin{equation*}
    \begin{aligned}
    \|\frP(\boldsymbol{f})\|_{B_{p,q}^{\boldsymbol{r}}(\bR^d,w\,\mathrm{d}x)} \leq N\|\boldsymbol{f}\|_{\ell_q^{\boldsymbol{r}}(L_p(\bR^d,w\,\mathrm{d}x))},\quad q\in(0,\infty),
    \end{aligned}
\end{equation*}
where $N$ is independent of $\boldsymbol{f}$. Therefore, $\frP$ is a linear transformation from $Y$ to $X$. 
\end{enumerate}
The lemma is proved.
\end{proof}

\begin{prop}
\label{22.04.24.20.57}
    Let $p\in(1,\infty)$, $q\in(0,\infty)$, $w\in A_p(\bR^d)$, and $\boldsymbol{r}:\bZ\to(-\infty,\infty)$ be a sequence satisfying
    $$
    \sup_{j\in\bZ}|\boldsymbol{r}(j+1)-\boldsymbol{r}(j)|<\infty.
    $$
        \begin{enumerate}[(i)]
        \item The space $X$ is a quasi-Banach space
        \item The closure of $C_c^{\infty}(\bR^d)$ under the quasi-norm $\|\cdot\|_{X}$ is $X$,
    \end{enumerate}
where the space $X$ indicates $H_p^{\boldsymbol{r}}(\bR^d,w\,\mathrm{d}x)$ or $B_{p,q}^{\boldsymbol{r}}(\bR^d,w\,\mathrm{d}x)$.
\end{prop}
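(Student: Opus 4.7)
For part (i), my plan is to reduce completeness of $X$ to completeness of the ambient sequence space $Y$ via the retract structure of Lemma \ref{22.04.19.16.55}. Here $Y$ equals $L_p(\bR^d,w\,dx)(\ell_2^{\boldsymbol{r}})$ in the Bessel-potential case and $\ell_q^{\boldsymbol{r}}(L_p(\bR^d,w\,dx))$ in the Besov case; both are quasi-Banach spaces (genuinely Banach when $q\geq 1$, only quasi-Banach when $q\in(0,1)$). Since $\frI$ is isometric by \eqref{J norm equivalence} and $\frP$ is bounded by the estimates within the proof of Lemma \ref{22.04.19.16.55}, any Cauchy sequence $\{f_n\}\subseteq X$ yields a Cauchy sequence $\{\frI(f_n)\}\subseteq Y$ converging to some $\boldsymbol{g}\in Y$. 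Then $\frP\circ\frI=\mathrm{Id}_X$ combined with continuity of $\frP$ gives $f_n=\frP\frI(f_n)\to\frP(\boldsymbol{g})$ in $X$.

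For part (ii), the strategy is a two-step approximation. Given $f\in X$, set $S^{(N)}f:=S_0 f+\sum_{j=1}^N\Delta_j f$. I would first show $S^{(N)}f\to f$ in $X$ as $N\to\infty$: in the Besov case this is immediate, being the tail of a convergent series in $\ell_q^{\boldsymbol{r}}$; in the Bessel-potential case it follows from dominated convergence applied to the $\ell_2^{\boldsymbol{r}}$-valued sequence $(2^{\boldsymbol{r}(j)}\Delta_j f)_{j>N}$, using the envelope $\left(\sum_{j\geq 1}|2^{\boldsymbol{r}(j)}\Delta_j f|^2\right)^{1/2}\in L_p(\bR^d,w\,dx)$. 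Each truncate $g:=S^{(N)}f$ is a $C^\infty(\bR^d)$-function (by Paley--Wiener, its Fourier transform is supported in $\{|\xi|\leq 2^{N+1}\}$) and lies in $L_p(\bR^d,w\,dx)$ as a convolution of $f$ with a fixed Schwartz function. I then multiply by a smooth cutoff $\chi_R(x):=\chi(x/R)$, where $\chi\in C_c^\infty(\bR^d)$ and $\chi\equiv 1$ on $B_1(0)$, producing $\chi_R g\in C_c^\infty(\bR^d)$, and must show $\chi_R g\to g$ in $X$ as $R\to\infty$.

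The main obstacle is this final convergence, because multiplication by $\chi_R$ does not commute with the Littlewood--Paley operators, so one cannot directly reduce to the $L_p(\bR^d,w\,dx)$-convergence $\chi_R g\to g$ (which itself holds by dominated convergence). My plan is to split $\Delta_j(\chi_R g-g)$ according to whether $j$ is small or large. For each fixed $j\leq N+2$, weighted boundedness of $\Delta_j$ on $L_p(\bR^d,w\,dx)$ (Lemma \ref{wbound}) together with the $L_p(\bR^d,w\,dx)$-convergence $\chi_R g\to g$ handles that term. For $j>N+2$ one has $\Delta_j g=0$, so $\Delta_j(\chi_R g)=[\Delta_j,\chi_R]g$, and the commutator identity
\[
[\Delta_j,\chi_R]g(x)=\int\Psi_j(x-y)\bigl(\chi_R(y)-\chi_R(x)\bigr)g(y)\,dy
\]
combined with the Lipschitz bound on $\chi_R$ (gaining $R^{-1}$) and the factor $2^{-j}$ from $|x-y||\Psi_j(x-y)|$ yields, upon iteration, arbitrarily rapid decay of the form $R^{-M}2^{-jM}\mathbb{M}g(x)$. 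Choosing $M$ large relative to $\|\boldsymbol{r}\|_d$ forces the corresponding weighted tail in the $X$-norm to be summable in $j$ with an overall $R^{-M}\to 0$ factor, controlled using Proposition \ref{lem:FS ineq}. A diagonal selection $(N_n,R_n)$ then produces $C_c^\infty$-approximants converging to $f$ in $X$, completing the proof.
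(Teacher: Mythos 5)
Your part (i) follows exactly the paper's argument: completeness is transferred from the sequence space $Y$ to $X$ through the retract pair $(\frI,\frP)$ of Lemma \ref{22.04.19.16.55}, using the isometry \eqref{J norm equivalence}, the boundedness of $\frP$, and $\frP\frI=\mathrm{Id}_X$, so nothing to add there. For part (ii) you take a genuinely different route: the paper first reduces to the order-$\boldsymbol{0}$ case via Proposition \ref{22.05.03.11.34}, quotes Bui's density theorem \cite[Theorem 2.4]{qui1982weighted} for $\cS(\bR^d)$ in the weighted spaces of order zero, and finishes with the "typical truncation and diagonal arguments," whereas you prove density directly by the frequency truncation $S^{(N)}f$ followed by a spatial cutoff $\chi_R$. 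Your version is self-contained (no lifting, no external density theorem) at the price of controlling the interaction of $\chi_R$ with the Littlewood--Paley pieces by hand; the paper's version is shorter but leans on a citation and leaves the truncation step implicit. Your plan does work, and the only loosely stated point is the "iteration" producing the bound $R^{-M}2^{-jM}\bM g(x)$: the clean way to obtain it is to Taylor-expand $\chi_R$ around $x$ to order $M$ inside $\int\Psi_j(x-y)\chi_R(y)g(y)\,dy$ and note that every polynomial correction term vanishes identically for $j>N+2$, since $\cF\bigl[(\cdot)^\beta\Psi_j\bigr]$ is supported in $\{2^{j-1}\le|\xi|\le2^{j+1}\}$ while $\cF[g]$ is supported in $\{|\xi|\le 2^{N+1}\}$; the Taylor remainder then gives precisely $N(M)\,R^{-M}2^{-jM}\bM g(x)$, and choosing $M>\|\boldsymbol{r}\|_d$ makes the high-frequency tail summable uniformly in $R$, with the weighted maximal inequality of Proposition \ref{lem:FS ineq} (and, for $q<1$, the $q$-triangle inequality) finishing the estimate as you indicate. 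One small correction: $S^{(N)}f\in L_p(\bR^d,w\,dx)$ because each of its finitely many pieces $S_0f,\Delta_1f,\dots,\Delta_Nf$ lies in $L_p(\bR^d,w\,dx)$ by the definition of the $X$-norm, not because it is "a convolution of $f$ with a fixed Schwartz function" — $f$ itself need not belong to $L_p(\bR^d,w\,dx)$ when $\boldsymbol{r}$ takes negative values, though this does not affect the rest of your argument.
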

\begin{proof}
We put
$$
(X,Y)=\left(H_p^{\boldsymbol{r}}(\bR^d,w\,\mathrm{d}x),L_p(\bR^d,w\,\mathrm{d}x)(\ell_2^{\boldsymbol{r}})\right)
$$
or 
$$
(X,Y)=\left(B_{p,q}^{\boldsymbol{r}}(\bR^d,w\,\mathrm{d}x),\ell_q^{\boldsymbol{r}}(L_p(\bR^d,w\,\mathrm{d}x))\right)
$$
as in the previous lemma.
\begin{enumerate}[(i)]
\item All properties of a quasi-Banach space are obvious except completeness.
    Thus, we only prove the completeness. Let $\{f_n\}_{n=1}^{\infty}$ be a Cauchy sequence in $X$.
    It is obvious that $Y$ is a Banach space due to the completeness of $L_p$-spaces with general measures. 
     Therefore, using the equivalence of the norms in \eqref{J norm equivalence}, one can find a $\boldsymbol{f}\in Y$ such that 
    \begin{align*}
        \frI(f_n)=(S_0f_n,\Delta_1f_n,\cdots)\to\boldsymbol{f}\quad \text{ in } Y.
    \end{align*}
    By Lemma \ref{22.04.19.16.55}, $\frP(\boldsymbol{f})\in X$ and
    $$
    \|f_n-\frP(\boldsymbol{f})\|_{X}\leq N\|\frI(f_n)-\boldsymbol{f}\|_{Y}\to0
    $$
    as $n\to\infty$. Therefore, $X$ is complete.
\item With the help of Proposition \ref{22.05.03.11.34}, it is sufficient to consider the case that the order is $\boldsymbol{0}$.
Then $\cS(\bR^d)$ is dense in $X$ due to \cite[Theorem 2.4]{qui1982weighted}. 
Finally, based on the typical truncation and diagonal arguments, the result is obtained. 
\end{enumerate}
The proposition is proved.
\end{proof}

\vspace{1cm}

\textbf{Declarations of interest.}
Declarations of interest: none

\textbf{Data Availability.}
Data sharing not applicable to this article as no datasets were generated or analysed during the current study.

\textbf{Acknowledgements.}
J.-H. Choi has been supported by a KIAS Individual Grant(MG102701) at Korea Institute for Advanced Study.
I. Kim has been supported by the National Research Foundation of Korea(NRF) grant funded by the Korea government(MSIT) (RS-2025-16065358). 
J.B. Lee has been supported by the National Research Foundation of Korea(NRF) grant funded by the Korea government(MSIT) (No.2021R1C1C2008252).
The authors would like to thank the referees for their careful reading and useful comments.
J.-H. Choi and J. B. Lee are also highly indebted to Dr. Jinsol Seo for the careful reading of the manuscript and the valuable suggestions and remarks.

\bibliographystyle{plain}

\begin{thebibliography}{99}


\bibitem{ALM2022}
A. Agresti, N. Lindemulder, M. C. Veraar, \emph{On the trace embedding and its applications to evolution equations}, Math. Nachr. \textbf{296(4)}, pp 1319-1350, 2023, DOI : 10.1002/mana.202100192.


\bibitem{BL1976}
J. Bergh and J. L\"ofstr\"om, \emph{Interpolation spaces: an introduction}, Springer-Verlag (1976).

\bibitem{choi_thesis}
J.-H. Choi, \emph{On $L_p$-regularity theory for parabolic partial differential equations with time measurable pseudo-differential operators (Ph.D diss.)}, Korea University, 2022.

\bibitem{Choi_Kim2022}
J.-H. Choi, I. Kim, \emph{A weighted $L_p$-regularity theory for parabolic partial differential equations with time measurable pseudo-differential operators}, J. Pseudo-Differ. Oper. Appl. \textbf{14(4)}, Article No. 55, 2023, DOI : 10.1007/s11868-023-00550-6.

\bibitem{Choi_Kim2023}
J.-H. Choi, I. Kim,
\newblock{A maximal $L_p$-regularity theory to initial value problems with time measurable nonlocal operators generated by additive processes,}
\newblock{\em Stoch. Partial Differ. Equ. Anal. Comp.} 
\textbf{12(1)}, pp 352-415, 2024, DOI : 10.1007/s40072-023-00286-w.

\bibitem{CLSW2023trace}
J.-H. Choi, J.B. Lee, J. Seo, K. Woo, \emph{On the trace theorem to Volterra type equations with local or non-local derivatives}, arXiv preprint, arXiv:2309.00370v2.



\bibitem{Dong_Kim2018}
H. Dong, D. Kim, \emph{On $L_p$-estimates for elliptic and parabolic equations with $A_p$ weights}, Trans. Am. Math. Soc. \textbf{370}, pp 5081-5130, 2018, DOI : 10.1090/tran/7161.

\bibitem{Dong_Kim2021}
H. Dong, D. Kim, \emph{An approach for weighted mixed-norm estimates for parabolic equations with local and non-local time derivatives}, Adv. Math. \textbf{377}, 107494, 2021, DOI : 10.1016/j.aim.2020.107494.

\bibitem{Dong_Liu2022}
H. Dong, Y. Liu, \emph{Weighted mixed norm estimates for fractional wave equations with VMO coefficients}, J. Differ. Equ. \textbf{337}, pp 168-254, 2022, DOI : 10.1016/j.jde.2022.07.040.


\bibitem{dong2021sobolev}
H. Dong, Y. Liu, \emph{Sobolev estimates for fractional parabolic equations with space-time non-local operators}, Calc. Var. Partial Differ. Equ. \textbf{62}, Article no.96, 2023, DOI : 10.1007/s00526-023-02431-8.

\bibitem{fackler2020weighted}
S. Fackler, T.P. Hyt\"onen, N. Lindemulder.
\emph{Weighted estimates for operator-valued Fourier multipliers}, Collect. Math. \textbf{71}, pp 511-548, 2020, DOI : 10.1007/s13348-019-00275-0.

\bibitem{Gallarati_Veraar2017}
C. Gallarati, M. C. Veraar, \emph{Maximal regularity for non-autonomous equations with measurable dependence on time}, Potential Anal. \textbf{46(3)}, pp 527-567, 2017, DOI : 10.1007/s11118-016-9593-7.

\bibitem{RdF1985weighted}
J. Garc\'ia-Cuerva, J. L. Rubio de Francia, \emph{Weighted norm inequalities and related topics}, North-Holland, Amsterdam, 1985.

\bibitem{grafakos2014classical}
L. Grafakos, \textit{Classical Fourier Analysis}, 3rd ed., Graduate Texts in Mathematics Vol. 249, Springer, New York, 2014, DOI : 10.1007/978-1-4939-1194-3.

\bibitem{gyongy2021lp}
I. Gy\"ongy, S. Wu, \emph{On $L_p$-solvability of stochastic integro-differential equations}, Stoch. Partial Differ. Equ. Anal. Comput. \textbf{9(2)}, pp 295–342, 2021, DOI : 10.1007/s40072-019-00160-8.


\bibitem{Haagerup1981}
U. Haagerup, \emph{The best constants in the Khintchine inequality}, Studia Math. \textbf{70(3)}, pp 231-283, 1981.

\bibitem{Hemmel_Lindemulder2022}
F. Hummel, N. Lindemulder \emph{Elliptic and parabolic boundary value problems in weighted function spaces}, Potential Anal. \textbf{57}, pp 601–669, 2022, DOI : 10.1007/s11118-021-09929-w.

\bibitem{Hy_Ka2012}
T. Hyt\"onen, A. Kairema, \emph{Systems of dyadic cubes in a doubling metric space}, Colloq. Math. \textbf{126(1)}, pp 1-33, 2012, DOI : 10.4064/cm126-1-1.

\bibitem{kang2021lp}
J. Kang, D. Park. \emph{An $L_q(L_p)$-theory for time-fractional diffusion equations with nonlocal operators generated by L\'evy processes with low intensity of small jumps}, Stoch. Partial Differ. Equ. Anal. Comput. \textbf{12(3)}, pp 1439--1491, 2024, DOI : 10.1007/s40072-023-00309-6.

\bibitem{kim2015parabolic}
I. Kim, K.-H. Kim, S. Lim, \emph{Parabolic BMO estimates for pseudo-differential operators of arbitrary order}, J. Math. Anal. Appl. \textbf{427(2)}, pp 557-580, 2015, DOI : 10.1016/j.jmaa.2015.02.065.

\bibitem{kim2016lplq}
I. Kim, S. Lim, K.-H. Kim, \emph{An $L_q(L_p)$-Theory for parabolic pseudo-differential equations: Calder\'on--Zygmund approach}, Potential Anal. \textbf{45(3)}, pp 463-483, 2016, DOI : 10.1007/s11118-016-9552-3.

\bibitem{kim2016lp}
I. Kim, K.-H. Kim, \emph{An $L_p$-theory for stochastic partial differential equations driven by L\'evy processes with pseudo-differential operators of arbitrary order}, Stoch. Process. Appl. \textbf{126(9)}, pp 2761-2786, 2016, DOI : 10.1016/j.spa.2016.03.001.

\bibitem{kim2018lp}
I. Kim, \emph{An $L_p$-Lipschitz theory for parabolic equations with time measurable pseudo-differential operators}, Commun. Pure Appl. Anal. \textbf{17(6)}, pp 2751-2771, 2018, DOI : 10.3934/cpaa.2018130.

\bibitem{kim2018}
I. Kim,  K.-H. Kim, \emph{On the second order derivative estimates for degenerate parabolic equations}, J. Differ. Equ. \textbf{265(11)}, pp 5959-5983, 2018, DOI : 10.1016/j.jde.2018.07.014.

\bibitem{kim2019lp}
I. Kim, K.-H. Kim, P. Kim, \emph{An $L_p$-theory for diffusion equations related to stochastic processes with non-stationary independent increment}, Trans. Am. Math. Soc. \textbf{371(5)}, pp 3417-3450, 2019, DOI : 10.1090/tran/7410.

\bibitem{kim2012lp}
K.-H. Kim, P. Kim, \emph{An $L_p$-theory of a class of stochastic equations with the random fractional Laplacian driven by L\'evy processes}, Stoch. Process. Appl. \textbf{122(12)}, pp 3921–3952, 2012, DOI : 10.1016/j.spa.2012.08.001.

\bibitem{kim2017}
K.-H. Kim, K. Lee, \emph{On the heat diffusion starting with degeneracy}, J. Differ. Equ. \textbf{262(3)}, pp 2722-2744, 2017, DOI : 10.1016/j.jde.2016.11.013.


\bibitem{kim2021lq}
K.-H. Kim, D. Park, J. Ryu, \emph{An $L_q(L_p)$-theory for diffusion equations with space-time nonlocal operators}, J. Differ. Equ. \textbf{287}, pp 376-427, 2021, DOI : 10.1016/j.jde.2021.04.003.

\bibitem{kim2021sobolev}
K.-H. Kim, D. Park, J. Ryu, \emph{A Sobolev space theory for the Stochastic Partial Differential Equations with space-time non-local operators}, J. Evol. Equ. \textbf{22(3)}, 57, 2022, DOI : 10.1007/s00028-022-00813-7.

\bibitem{Kohne2010}
M. K\"ohne, J. Pr\"uss, M. Wilke, \emph{On quasilinear parabolic evolution equations in weighted $L_p$-spaces}, J. Evol. Equ. \textbf{10(2)}, pp 443-463, 2010, DOI : 10.1007/s00028-010-0056-0.

\bibitem{Kohne2014}
M. K\"ohne, J. Pr\"uss, M. Wilke, \emph{On quasilinear parabolic evolution equations in weighted $L_p$-spaces \uppercase\expandafter{\romannumeral2}}, J. Evol. Equ. \textbf{14}, pp 509–533, 2014, DOI : 10.1007/s00028-014-0226-6.

\bibitem{kurtz1979results}
D.S. Kurtz, R.L. Wheeden, \emph{Results on weighted norm inequalities for multipliers}, Trans. Am. Math. Soc. \textbf{255}, pp 343-362, 1979, DOI : 10.2307/1998180.

\bibitem{kurtz1980little}
D.S. Kurtz, \emph{Littlewood--Paley and multiplier theorems on weighted $L^p$ spaces}, Trans. Am. Math. Soc. \textbf{259(1)}, pp235-257, 1980, DOI : 10.2307/1998156.

\bibitem{Lindemulder2020}
N. Lindemulder, \emph{Maximal regularity with weights for parabolic problems with inhomogeneous boundary conditions}, J. Evol. Equ. \textbf{20}, pp 59–108, 2020, DOI : 10.1007/s00028-019-00515-7.

\bibitem{Ler2011weighted}
A. K. Lerner, \emph{Sharp weighted norm inequalities for Littlewood--Paley operators and singular integrals}, Adv. Math. \textbf{226(5)}, pp 3912-3926, 2011.

\bibitem{Meyries2012}
M. Meyries, R. Schnaubelt, \emph{Maximal regularity with temporal weights for parabolic problems with inhomogeneous boundary conditions}, Math. Nachr. \textbf{285(8-9)}, pp 1032-1051, 2012, DOI : 10.1002/mana.201100057.

\bibitem{Martin_Veraar2014}
M. Meyries, M. C. Veraar, \emph{Traces and embeddings of anisotropic function spaces}, Math. Ann. \textbf{360(3)}, pp 571-606, 2014, DOI : 10.1007/s00208-014-1042-6.

\bibitem{mikulevivcius1992}
R. Mikulevi\v{c}ius, H. Pragarauskas, \emph{On the Cauchy problem for certain integro-
differential operators in Sobolev and H\"older spaces}, Lith. Math. J. \textbf{32(2)}, pp 238–264, 1992. 

\bibitem{mikulevivcius2014}
R. Mikulevi\v{c}ius, H. Pragarauskas, \emph{On the Cauchy problem for integro-differential
operators in Sobolev classes and the martingale problem}, J. Differ. Equ. \textbf{256(4)}, pp 1581–1626, 2014, DOI : 10.1016/j.jde.2013.11.008.

\bibitem{mikulevivcius2017p}
R. Mikulevi\v{c}ius, C. Phonsom, \emph{On $L^p$ theory for parabolic and elliptic integro-differential equations with scalable operators in the whole space}, Stoch. Partial Differ. Equ. Anal. Comput. \textbf{5(4)}, pp 472–519, 2017, DOI : 10.1007/s40072-017-0095-4.

\bibitem{mikulevivcius2019cauchy}
R. Mikulevi\v{c}ius, C. Phonsom, \emph{On the Cauchy problem for integro-differential equations in the scale of spaces of generalized smoothness}, Potential Anal. \textbf{50(3)}, pp 467-519, 2019, DOI : 10.1007/s11118-018-9690-x.

\bibitem{neerven2012maximal}
J.V. Neerven, M. Veraar, L. Weis, \emph{Maximal $L^p$-Regularity for Stochastic Evolution Equations}, SIAM  J. Math. Anal. \textbf{44(4)}, pp 1372–1414, 2012, DOI : 10.1137/110832525.

\bibitem{neerven2020}
J.V. Neerven, M. Veraar,  \emph{Maximal inequalities for stochastic convolutions in 2-smooth Banach spaces and applications to stochastic evolution equations}, {Philos. Trans. Royal Soc. A.} \textbf{378(2185)}, 2020, DOI : 10.1098/rsta.2019.0622.

\bibitem{portal2019stochastic}
P. Portal, M. Veraar, \emph{Stochastic maximal regularity for rough time-dependent problems}, Stoch. Partial Differ. Equ. Anal. Comput. \textbf{7(4)}. pp 541–597, 2019, DOI : 10.1007/s40072-019-00134-w.

\bibitem{qui1982weighted}
B.H. Qui, \emph{Weighted Besov and Triebel spaces: Interpolation by the real method}, Hiroshima Math. J. \textbf{12(3)}, pp 581-605, 1982, DOI : 10.32917/hmj/1206133649.


\bibitem{rychkov2001weights}
V.S. Rychkov, \emph{Littlewood–Paley theory and function spaces with $A_p^{loc}$ weights} Math. Nachr. \textbf{224(1)}, pp 145-180, 2001, DOI : 10.1002/1522-2616(200104)224:1$<$145::AID-MANA145$>$3.0.CO;2-2.



  
\bibitem{stein2016harmonic}
E.M. Stein, \emph{Harmonic analysis : Real-variable methods, Orthogonality, and Oscillatory integrals}, Princeton Mathematical series Vol. 43, Princeton University Press, 2016, DOI : 10.1515/9781400883929.

\bibitem{triebel1978interpolation}
H. Triebel, \emph{Interpolation theory, function spaces, differential operators}, North-Holland, 1978.

\bibitem{Wil2007square}
M. Wilson, \emph{The intrinsic square function}, Rev. Mat. Iberoam. \textbf{23}, pp 771-791, 2007.

\bibitem{zhang2013maximal}
X. Zhang, \emph{$L^p$-maximal regularity of nonlocal parabolic equations and applications}, Ann. l'Inst. Henri Poincar\'e C, Anal. non lin\'eaire \textbf{30(4)}, pp 573-614, 2013, DOI : 10.1016/j.anihpc.2012.10.006.

\bibitem{zhang2013lp}
X. Zhang, \emph{$L^p$-solvability of nonlocal parabolic equations with spatial dependent and non-smooth kernels}, In Emerging Topics on Differential Equations and Their Applications pp 247–262, Nankai Series in Pure, Applied Mathematics and Theoretical Physics Vol. 10, World Scientific, 2013, DOI : 10.1142/9789814449755\_0020.



\end{thebibliography}

\end{document}